\title{Approximating Gromov-Hausdorff Distance in Euclidean Space}
\author[berkeley]{Sushovan Majhi\corref{cor1}\fnref{fn1}}
\ead{smajhi@berkeley.edu}
\author[olemiss,tulane]{Jeffrey Vitter}
\ead{jsv@olemiss.edu}
\author[tulane]{Carola Wenk\fnref{fn1}}
\ead{cwenk@tulane.edu}
\address[berkeley]{School of Information, University of California, Berkeley, CA, USA 94720-4600}
\address[olemiss]{Department of Computer and Information Science, University of Mississippi}
\address[tulane]{Department of Computer Science, Tulane University}
\begin{document}

\begin{abstract}
  The Gromov-Hausdorff distance $(d_{GH})$ proves to be a useful distance
  measure between shapes. In order to approximate $d_{GH}$ for compact subsets
  $X,Y\subset\R^d$,  we look into its relationship with $d_{H,iso}$, the infimum
  Hausdorff distance under Euclidean isometries. As already known for dimension
  $d\geq 2$, the $d_{H,iso}$ cannot be bounded above by a constant factor times
  $d_{GH}$. For $d=1$, however, we prove that $d_{H,iso}\leq\frac{5}{4}d_{GH}$.
  We also show that the bound is tight. In effect, this gives rise to an
  $O(n\log{n})$-time algorithm to approximate $d_{GH}$ with an approximation
  factor of $\left(1+\frac{1}{4}\right)$.
\end{abstract}

\begin{keyword}
Gromov-Hausdorff distance \sep approximation algorithm \sep abstract distance
measures \sep shape comparison
\end{keyword}

\maketitle

\section{Introduction}\label{sec:intro} This paper grew out of our effort to
compute the Gromov-Hausdorff distance between Euclidean subsets. The
Gromov-Hausdorff distance between two abstract metric spaces was first
introduced by M. Gromov in ICM 1979 (see Berger \cite{berger_encounter_2000}).
The notion, although it emerged in the context of Riemannian metrics, proves to
be a natural distance measure between any two (compact) metric spaces. Only in
the last decade the Gromov-Hausdorff distance has received much attention from
the researchers in the more applied fields\sush{citation}. In shape recognition
and comparison, shapes are regarded as metric spaces that are deformable under a
class of transformations. Depending on the application in question, a suitable
class of transformations is chosen, then the dissimilarity between the shapes
are defined by a suitable notion of \emph{distance measure or error} that is
invariant under the desired class of transformations. For comparing Euclidean
shapes under Euclidean isometry, the use of Gromov-Hausdorff distance is
proposed and discussed in
\cite{memoli_theoretical_2005,memoli_use_nodate,memoli_gromov-hausdorff_2008,
memoli_properties_2012}.

In this paper, we are primarily motivated by the questions pertaining to the
computation of the Gromov-Hausdorff distance, particularly between Euclidean
subsets.  Although the distance measure puts the Euclidean shape matching on a
robust theoretical foundation \cite{memoli_theoretical_2005,memoli_use_nodate},
the question of computing the Gromov-Hausdorff distance, or even an
approximation thereof, still remains elusive. In the recent years, some efforts
have been made to address such computational aspects. Most notably, the authors
of \cite{agarwal_computing_2015} show an NP-hardness result for approximating
the Gromov-Hausdorff distance between metric trees. For Euclidean subsets,
however, the question of a polynomial-time algorithm is still open.
In~\cite{memoli_properties_2012}, the authors show that computing the
Gromov-Hausdorff distance is related to various NP-hard problems and study a
variant of the distance measure.

The authors of \cite{hutchison_approximating_2005} introduce the additive
distortion---the one used in Gromov-Hausdorff distance---and consider the
problem of minimizing the distortion of bijective functions between sets of the
same cardinality. For the real-line case ($d=1$), the authors demonstrate a
polynomial-time $2$-approximation algorithm \cite[Theorem
6]{hutchison_approximating_2005}. An open problem is also posed in
\cite{hutchison_approximating_2005}: are there polynomial-time approximations to
find the distortion-minimizing bijection withing a factor less than $2$?
Although the context is similar, we consider minimizing the distortion of
correspondences between sets of possibly different cardinalities. As we see in
\defref{cor}, a correspondence between two sets is a more general relation than
a function.

All these computational concerns provoke the natural curiosity about the genuine
computational hardness of the Gromov-Hausdorff distance between Euclidean
subsets. Let $d\geq1$ and $X,Y\subseteq\R^d$ be compact sets equipped with the
standard Euclidean metric, and let $d_{GH}(X,Y)$ denote their Gromov-Hausdorff
distance. One then wonders:
\begin{enumerate}[(i)]
  \item Is there an algorithm to compute $d_{GH}(X,Y)$ \emph{exactly}
  in~polynomial-time?
  \item If not, can we find a polynomial-time approximation algorithm
  for $d_{GH}(X,Y)$, possibly with a reasonably small approximation factor?
  \item If not, is it NP-hard to approximate $d_{GH}(X,Y)$, like the metric
  graph case?
\end{enumerate} 

The above questions motivate our investigation into the computation of $d_{GH}$
in Euclidean spaces. 

\paragraph{Background and Related Work}
The notion of Gromov-Hausdorff distance is closely related to the notion of
Hausdorff distance. Let $(Z,d_Z)$ be any metric space. We first give a formal
definition of the directed Hausdorff distance between any two subsets of $Z$.
\begin{definition}[Directed Hausdorff Distance]\label{def:dh} For any two
 compact subsets $X,Y$ of a metric space $(Z,d_Z)$, the \bemph{directed
 Hausdorff distance} from $X$ to~$Y$, denoted $\overrightarrow{d}_H^Z(X,Y)$, is
 defined by $$\max_{x\in X}\min_{y\in Y}d_Z(x,y).$$
\end{definition}
Unfortunately, the directed Hausdorff distance is not symmetric. To retain
symmetry, the \bemph{Hausdorff distance} is defined in the following way:
\begin{definition}[Hausdorff Distance]\label{def:h} For any two compact subsets
$X,Y$ of a metric space $(Z,d_Z)$, their \bemph{Hausdorff distance}, denoted by
$d_H^Z(X,Y)$, is defined by
$$\max\left\{\overrightarrow{d}_H^Z(X,Y),\overrightarrow{d}_H^Z(Y,X)\right\}.$$
\end{definition}
\noindent To keep our notations simple, we drop the superscript when it is
understood that $Z$ is taken to be $\R^d$ and $X,Y$ are Euclidean subsets
equipped with the standard Euclidean metric $\mod{\cdot}$. The $d_{H}$ can be
computed in $O(n\log{n})$-time for finite point sets with at most $n$ points or
$n$ line segments; see \cite{Alt1995}.

We follow Gromov's book (\cite{gromov_metric_2007}) to define the
Gromov-Hausdorff distance. The primary definition uses the concept of an
isometry or distance-preserving map between metric spaces, which we define
first. 
\begin{definition}[Isometry]
A map $f:(X,d_X)\to (Y,d_Y)$ is called an \emph{isometry} if 
$$d_X(x_1,x_2)=d_Y(f(x_1),f(x_2)).$$ 
\end{definition}
\noindent We immediately note that an isometry $f$ is injective, and that
$f:X\to f(X)$ is a~homeomorphism.

We are now in a place to define the Gromov-Hausdorff distance formally. Unlike
the Hausdorff distance, the Gromov-Hausdorff distance is defined between two
abstract metric spaces $(X,d_X)$ and $(Y,d_Y)$ that may not share a common
ambient space. We start with the following formal definition:

\begin{definition}[Gromov-Hausdorff Distance \cite{gromov_metric_2007}]
  \label{def:gh}
 The \bemph{Gromov-Hausdorff distance}, denoted by $d_{GH}(X,Y)$, between two
 metric spaces $(X,d_X)$ and $(Y,d_Y)$ is defined to be
 $$d_{GH}(X,Y)=\inf_{\substack{f:X\to Z \\g:Y\to Z\\ Z}}d_H^Z(f(X),g(Y)),$$
 where the infimum is taken over all isometries $f:X\to Z$, $g:Y\to Z$
 and~metric spaces $(Z,d_Z)$.
\end{definition}

The definition of Gromov-Hausdorff distance may not seem very natural at first
glance---it deserves a bit of explanation. As mentioned earlier, the definition
works for abstract metric spaces $X$ and $Y$, without requiring them to be
embedded in a common ambient metric space. In order to anatomize \defref{gh}, we
first observe that the maps $f,g$ are embedding $X$ and $Y$, respectively, into
a common metric space $(Z,d_Z)$. Since $f,g$ are isometries, the subsets
$f(X),g(Y)$ of $Z$ are isometric to $X$ and $Y$, respectively. As $f(X)$
and~$g(Y)$ are subsets of $Z$, their Hausdorff distance $d^Z_H(f(X),g(Y))$ can
now be considered. The Gromov-Hausdorff distance is defined to minimize (if the
minimum exists) this~$d^Z_H(f(X),g(Y))$, subject to all isometries $f,g$ and
ambient metric space~$(Z,d_Z)$. As a consequence, Gromov-Hausdorff distance is a
distance measure between abstract metric spaces $X$ and $Y$ that is also
invariant under any isometric transformations of $X$ or $Y$. A detour to
\cite{memoli_properties_2012,gromov_metric_2007,burago_course_2001} is suggested
for a detailed treatment of the definitions and properties of Gromov-Hausdorff
distance.

In order to present an equivalent definition of the Gromov-Hausdorff
distance that is computationally viable, we first define the notion of
a~correspondence.
\begin{definition}[Correspondence]\label{def:cor} A \bemph{correspondence} $\C$
  between any two (non-empty) sets $X$ and $Y$ is defined to be a subset
  $\C\subseteq X\times Y$ with the following two properties:
  \begin{enumerate}[i)]
  \item for any $x\in X$, there exists a $y\in Y$ such that $(x,y)\in\C$, and
  \item for any $y\in Y$, there exists an $x\in X$ such that $(x,y)\in\C$.
  \end{enumerate}
\end{definition}
A correspondence $\C$ is a special \emph{relation} that assigns all points of
both $X$ and $Y$ a corresponding point. If the sets $X$ and $Y$ in the
\defref{cor} are equipped with metrics $d_X$ and $d_Y$, respectively, we can
also define the distortion of the~correspondence $\C$.
\begin{definition}[(Additive) Distortion of
  Correspondence]\label{def:distortion} Let $\C$ be a
  correspondence between two metric spaces $(X,d_X)$ and $(Y,d_Y)$, then its
  \bemph{distortion}, denoted~$Dist(\C)$, is defined to be
  $$\sup_{(x_1,y_1),(x_2,y_2)\in\C}\mod{d_X(x_1,x_2)-d_Y(y_1,y_2)}$$
\end{definition}
The distortion $Dist(\C)$ is sometimes called the \emph{additive} distortion as
opposed to the \emph{multiplicative} distortion; see \cite{kenyon_low_2010} for
a definition. In the context of the Gromov-Hausdorff distance, the distortion of
a correspondence measures how much the two metrics are distorted by the $\C$. In
the extreme case, when $\C$ is an isometry, its distortion becomes zero. For
non-empty sets $X,Y$, we denote by $\C(X,Y)$ the set of all correspondences
between $X$ and $Y$. An alternative definition of the Gromov-Hausdorff distance
is given in \lemref{gh-equiv}; see \cite{burago_course_2001} for a proof.
\begin{lemma}\label{lem:gh-equiv} For any two compact metric spaces $(X,d_X)$
  and $(Y,d_Y)$, the following holds:
  $$d_{GH}(X,Y)=\frac{1}{2}\inf\limits_{\C\in\C(X,Y)} Dist(\C).$$
\end{lemma}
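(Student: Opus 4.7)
The plan is to prove the equality by showing both inequalities. Writing $\eta^* := \inf_{\C}\mathrm{Dist}(\C)$, I will establish $d_{GH}(X,Y) \le \tfrac{1}{2}\eta^*$ and $\tfrac{1}{2}\eta^* \le d_{GH}(X,Y)$ separately. Both directions rely on translating between an ambient embedding (the definition of $d_{GH}$) and a purely combinatorial pairing (a correspondence), using the triangle inequality as the main bridge.

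For the first direction I fix any correspondence $\C \in \C(X,Y)$ with finite distortion $\eta := \mathrm{Dist}(\C)$ and construct an explicit ambient metric space realizing the embedding in \defref{gh}. Let $Z = X \sqcup Y$ (disjoint union) and define $d_Z$ to agree with $d_X$ on $X$, with $d_Y$ on $Y$, and across the two sides by
\[
d_Z(x,y) \;:=\; \inf_{(x',y')\in\C}\Bigl(d_X(x,x') + \tfrac{\eta}{2} + d_Y(y',y)\Bigr),\qquad x\in X,\ y\in Y.
\]
The key technical step is verifying that $d_Z$ is a genuine metric; the nontrivial cases of the triangle inequality (those mixing points from $X$ and $Y$) reduce to the bound $|d_X(x_1',x_2') - d_Y(y_1',y_2')| \le \eta$ guaranteed by the distortion. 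Once $d_Z$ is a metric, the canonical inclusions are isometric, and since every $x\in X$ is paired by $\C$ with some $y\in Y$ (and vice versa), each point has a partner within distance $\eta/2$ in $Z$; hence $d_H^Z(X,Y) \le \eta/2$, which gives $d_{GH}(X,Y) \le \eta/2$ and, taking the infimum, $d_{GH}(X,Y)\le\tfrac{1}{2}\eta^*$.

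For the reverse direction I start from any admissible triple $(f,g,Z)$ in \defref{gh} with $d_H^Z(f(X),g(Y)) < r$ and produce a correspondence of distortion at most $2r$. The natural candidate is
\[
\C_r \;:=\; \bigl\{(x,y)\in X\times Y \,:\, d_Z(f(x),g(y)) \le r \bigr\}.
\]
The Hausdorff bound ensures that each $x\in X$ has at least one partner $y\in Y$ with $d_Z(f(x),g(y))\le r$ (and symmetrically), so $\C_r$ is a correspondence. For any two pairs $(x_1,y_1),(x_2,y_2)\in\C_r$, four applications of the triangle inequality in $Z$, combined with the fact that $f$ and $g$ are isometries, yield $|d_X(x_1,x_2)-d_Y(y_1,y_2)| \le 2r$, so $\mathrm{Dist}(\C_r)\le 2r$ and therefore $\tfrac{1}{2}\eta^* \le r$; letting $r\downarrow d_{GH}(X,Y)$ finishes the proof.

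I expect the main obstacle to be the triangle inequality for the cross-metric $d_Z$ in the first direction, specifically the case with endpoints on the same side of the disjoint union but the middle point on the opposite side. That is where the half-factor of $\eta/2$ in the definition is essential: it exactly absorbs the potential discrepancy $|d_X - d_Y|$ between two paired distances, and checking this cleanly requires taking infima inside the correspondence carefully. Compactness of $X$ and $Y$ guarantees that the infima defining $\mathrm{Dist}(\C)$ and $d_Z(x,y)$ are attained (or can be approached arbitrarily closely), so no issues with non-measurability or unbounded distortions arise in the regime we care about.
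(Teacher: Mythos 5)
Your proposal is correct and is essentially the canonical argument from Burago--Burago--Ivanov, which is exactly the reference the paper defers to for this lemma (the paper itself gives no proof). The only point worth flagging is the degenerate case $\eta=0$, where your cross-distance $d_Z$ is only a pseudometric; one either passes to the metric quotient (the inclusions of $X$ and $Y$ remain isometric since $d_Z$ restricts to $d_X$ and $d_Y$ on each side) or replaces $\tfrac{\eta}{2}$ by $\tfrac{\eta}{2}+\varepsilon$ and lets $\varepsilon\downarrow 0$.
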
 

This combinatorial formulation unveils the genuine complexity entailed in the
computation of Gromov-Hausdorff distance. For two finite metric spaces $X,Y$
containing at most $n$ points, the computation takes $O(2^n)$-time by
enumerating all possible correspondences between the points of $X$ and $Y$.

\paragraph{Our Contribution}
 Our main contribution in this work is to provide a satisfactory answer to the
 quest of understanding the relation between $d_{H,iso}$ (see \defref{hiso}) and
 $d_{GH}$ when $X,Y$ are compact subsets of $\R^1$ equipped with the standard
 Euclidean metric.
 
In \thmref{gh}, we show that $$d_{H,iso}(X,Y)\leq\frac{5}{4}d_{GH}(X,Y)$$ for
any compact $X,Y\subset\R^1$. For subsets of the real line, it is tempting to
believe that $d_{GH}=d_{H,iso}$. We show in \thmref{lb} that this is, in fact,
not true by showing that the bound $\frac{5}{4}$ in \thmref{gh} is tight.  Since
$d_{H,iso}(X,Y)$ can be computed in $O(n\log{n})$-time (\cite{ROTE1991123}), we
provide an $O(n\log{n})$-time algorithm to approximate $d_{GH}(X,Y)$ for finite
$X,Y\subset\R^1$ with an approximation factor of~$(1+\frac{1}{4})$.

\section{Gromov-Hausdorff vs Hausdorff Distance under Isometry}
With the basic definitions now at our disposal, we make our readers acquainted
with a related notion $d_{H,iso}(X,Y)$ here, and list a few of its relevant
consequences. In this section, we also introduce the concept of nearest neighbor
correspondences, present some of their properties, and then illuminate the trail
that has led us to the pinnacle of our findings of \secref{sec:1d}.

\subsection{Comparing $d_{GH}$ with $d_{H,iso}$}
For any dimension $d\geq1$, a \emph{Euclidean isometry}\index{Euclidean
isometry} $T:\R^d\to\R^d$ is defined to be a map that preserves the distance,
i.e.,
$$\mod{T(x_1)-T(x_2)}=\mod{x_1-x_2}\mbox{ for all } x_1,x_2\in\R^d.$$ 

When $d=1$, the map $T$ can only afford to be a translation or a reflection
(flip). In~$d=2$, a Euclidean isometry is characterized by a combination of a
translation, a rotation by an angle, and a reflection about the origin. For more
about Euclidean isometries, see \cite{artin2011algebra}. We denote by $\E(\R^d)$
the set of all isometries of $\R^d$.

\begin{definition}[Hausdorff under Isometry]\label{def:hiso}
For any two compact subsets $X,Y$ of $\R^d$, we define
$$d_{H,iso}(X,Y)=\inf\limits_{T\in\mathcal{E}(\R^d)} d_H(X,T(Y)).$$
\end{definition}

We immediately note that $d_{H,iso}$ induces a pseudo-metric on the set of
compact subsets of $\R^d$; if $d_{H,iso}(X,Y)=0$, then $X$ is \emph{congruent}
to $Y$. \\

\begin{remark}\label{rem:hiso} If $X,Y$ are subsets of $\R^1$ with at most $n$
  points, the authors of \cite{ROTE1991123} prove that their $d_{H,iso}(X,Y)$
  can be computed in $O(n\log{n})$-time.
\end{remark}

The $d_{H,iso}(X,Y)$ minimizes the Hausdorff distance over only Euclidean
isometries; whereas $d_{GH}(X,Y)$ considers minimizing over \emph{all}
isometries and \emph{all} embeddings for $X$ and $Y$---not just Euclidean. The
observation quickly yields the following inequality:
$$
d_{GH}(X,Y)\leq d_{H,iso}(X,Y).
$$

It is most natural to wonder if they are, in fact, equal. To our
disappointment, we can contrive the following configuration in $\R^2$ to show
the contrary in~\exref{gh-2d}. 
\begin{example}[$d_{GH}<d_{H,iso}$ in $\R^2$]\label{ex:gh-2d} Let us consider
  two finite sets $X,Y\subset~\R^2$ as shown in \figref{gh-2d}, where
  $\alpha,h,\eps>0$ are constants. We take $X=\{x_1,x_2,x_3,x_4,x_5\}$,
  $Y=\{y_1,y_2,y_3,y_4,y_5\}$, and~$x_1=y_1$, $x_2=y_2$. In a moment's
  reflection, we see that the blue edges give us the correspondence with minimum
  distortion:
$$
\C_{opt}=\big\{(x_1,y_1),(x_2,y_2),(x_4,y_3),(x_5,y_3),(x_3,y_4),(x_3,y_5)\big
\}.
$$ 
with $dist(\C_{opt})=h+\eps$. Consequently, $d_{GH}(X,Y)=\frac{h+\eps}{2}$. On
the other hand,~$d_{H,iso}(X,Y)=h$. So, for $\eps<h$, we have
$d_{GH}(X,Y)<d_{H,iso}(X,Y)$.

\begin{figure}[thb]
  \centering
  \includegraphics[scale=1]{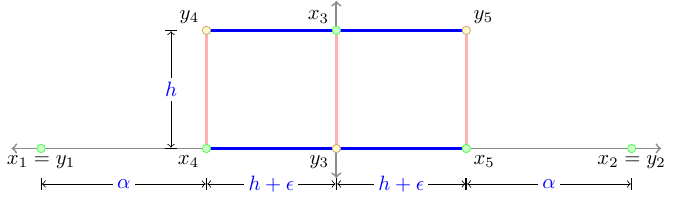}
  \caption[$d_{GH}$ vs $d_{H,iso}$ in Higher Dimensions]
  {The points of $X$ and $Y$ are shown in green and yellow, respectively. The
  correspondence with minimum distortion is shown by the blue edges and the
  nearest neighbor correspondence (see \defref{gh-cnn}) is shown by the red
  edges.}
  \label{fig:gh-2d}
\end{figure}
\end{example}

In \cite{memoli_gromov-hausdorff_2008}, M\'emoli shows the following bounds,
relating $d_{H,iso}$ and $d_{GH}$ between two compact subsets $X,Y$ of $\R^d$.
\begin{equation}\label{eqn:memoli}
  d_{GH}(X,Y)\leq d_{H,iso}(X,Y)\leq c'_d(M)^\frac{1}{2}\sqrt{d_{GH}(X,Y)},
\end{equation}
where $M=\max\big\{\mbox{diameter}(X),\mbox{diameter}(Y)\big\}$ and $c'_d$ is a
constant that depends only on the dimension $d$. In the inequality
\eqnref{memoli}, note the upper bound depends on the diameter of the input sets
$X$ and $Y$. For $d\geq 2$, such a dependence is unavoidable. See
\figref{gh-2d}. This leaves us with $d=1$, the compact subsets of the real line.

In $\R^1$ it often helps to visualize $X\times Y$ on the disjoint union of two
real lines in $\R^2$ and a correspondence $\C\in\C(X,Y)$ by edges between the
corresponding points; see \figref{visual}. Such a two dimensional visualization
comes in handy for the proofs.
\begin{figure}[thb]
    \begin{subfigure}[b]{0.5\linewidth}
    \centering
    \includegraphics[scale=0.75]{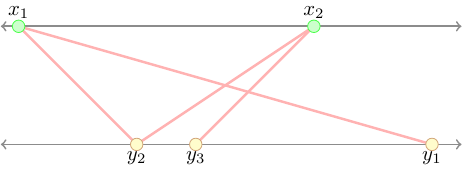}
    \caption{An example correspondence}
    \label{subfig:visual}
    \end{subfigure}
    \begin{subfigure}[b]{0.4\linewidth}
    \centering
    \includegraphics[scale=0.73]{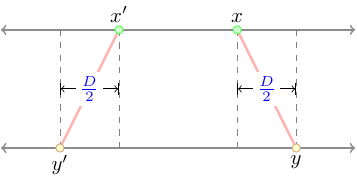}
    \caption{The standard configuration}
    \label{subfig:standard}
    \end{subfigure}
    \caption{On the left, the (sorted) $X=\{x_1,x_2\}$ and $Y=\{y_1,y_2,y_3\}$
    are identified as subsets of the top and the bottom lines respectively. The
    points of $X$ are shown in green, and the points of $Y$ are shown in yellow.
    We visualize the correspondence $\C=\{(x_1,y_1),(x_2,y_2),(x_2,y_3)\}$ by
    the red edges between the respective points. Also, the edges $(x_1,y_1)$ and
    $(x_2,y_2)$ are crossing. On the right, the distortion $D$ of a
    correspondence is attained by the pairs $(x',y')$ and $(x,y)$.
    \label{fig:visual}}
\end{figure}

It is tempting to believe, due to the deceptively simple structure of the real
line, that $d_{GH}=d_{H,iso}$. If it was true, we could compute $d_{GH}$ in
near-linear time; see \remref{hiso}. However, the following sophisticated
construction shows that the conjecture is false.
\begin{example}[$d_{GH}<d_{H,iso}$ in $\R^1$]\label{ex:gh-1d} In this example,
  we show that for any given $\delta>0$, there exist compact $X,Y\subset\R^1$
  such that $d_{GH}(X,Y)=\delta$ and $d_{H,iso}(X,Y)=\delta+\frac{\delta}{8}$.
  As a consequence, $d_{GH}(X,Y)<d_{H,iso}(X,Y)$.
  
  \begin{figure}[thb]
      \centering
      \includegraphics[scale=0.74]{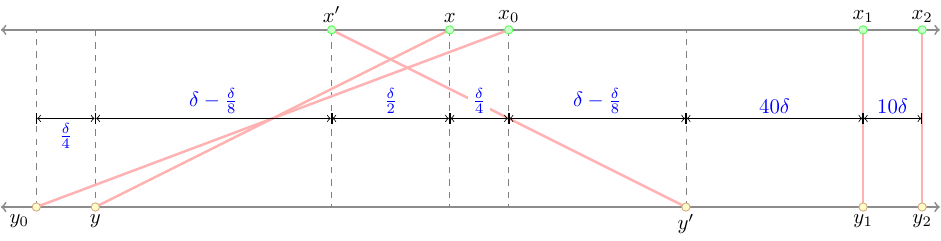}
      \caption[$d_{GH}$ vs $d_{H,iso}$ in $R^1$]
      {The points of $X$ and $Y$ are shown in green and yellow, respectively, on
      two copies of the real line. The optimal correspondence is shown by the
      red edges. The distortion for the correspondence is $2\delta$,
      consequently $d_{GH}(X,Y)=\delta$. We also note that the optimal
      correspondence is not crossing free.}
      \label{fig:gh_vs_h}
  \end{figure}
  The subsets $X,Y$ are taken as shown in \figref{gh_vs_h}, and $\delta>0$. We
  note that $d_H(X,Y)=\delta+\frac{\delta}{8}$. Now, we claim that
  $d_{H,iso}(X,Y)=\delta+\frac{\delta}{8}$. For a proof of our claim, we present
  in \tabref{lb1} the summary of $d_H(X,Y+\Delta)$, which considers translations
  of $Y$ by an amount $\Delta\in\R^1$. We also note that a translation of $-Y$
  does not help to reduce the~Hausdorff distance.
  
  {
  \setlength{\tabcolsep}{12pt}
  \renewcommand{\arraystretch}{1.7}
  \begin{table}[tbh]
      \centering
      \begin{tabular}{|c|p{2.5cm}|p{2.5cm}|c|}
          \hline
           $\Delta$ & $\overrightarrow{d}_H(X,Y+\Delta)$ 
           & $\overrightarrow{d}_H(Y+\Delta,X)$ & $d_H(X,Y+\Delta)$\\ 
           \hline
           $(-\infty,0)$ & -- & $(y_0,x')$ & $>\delta+\frac{\delta}{8}$ \\ 
           \hline
           $0$ & $(x,y')$ & $(y_0,x')$ & $\delta+\frac{\delta}{8}$\\
           \hline
           $(0,\frac{\delta}{8})$ & -- & $(y_k,x),(y_k,x_k)$ 
           & $\in\left(\delta+\frac{\delta}{8},\delta+\frac{\delta}{4}\right)$\\
           \hline
           $\frac{\delta}{8}$ & $(x,y),(x,y')$ & -- 
           & $\delta+\frac{\delta}{4}$\\
           \hline
           $(\frac{\delta}{8},\frac{\delta}{4})$ & $(x,y)$ & -- 
           & $\delta+\frac{\delta}{8}$\\
           \hline
           $\frac{\delta}{4}$ & $(x,y)$ & $(y',x_0)$ 
           & $\delta+\frac{\delta}{8}$\\
          \hline
          $(\frac{\delta}{4},\infty)$ & -- & $(y',x_0)$ 
          & $>\delta+\frac{\delta}{8}$\\
          \hline
      \end{tabular}
      \caption[Summary of $d_H(X,Y+\Delta)$]
      {A summary of $d_H(X,Y+\Delta)$ is recorded for $\Delta\in\R$ for
      \exref{gh-1d} shown in \figref{gh_vs_h}. In the second and third columns,
      the directed Hausdorff distances are achieved for the shown pairs of
      points. The other columns are self-explanatory.}
      \label{tab:lb1}
  \end{table}
  }
\end{example}

By our observation in the above example, we are intrigued by the quest of
bounding $d_{H,iso}$ from above by a constant multiple of $d_{GH}$. A constant
upper bound is presented in \secref{sec:1d} for $d=1$, along with the proof that
the bound is tight.

\subsection{Nearest Neighbor Correspondence}
To conclude our discussion of this section, we lastly present our line of
investigation into Hausdorff correspondences. As noted previously, in
\exref{gh-2d} and \exref{gh-1d}, $d_{GH}(X,Y)\neq d_{H,iso}(X,Y)$ in general.
However, we take the analysis one step further, and explore in \thmref{gh-cnn}
that there does not exist a Euclidean isometry $T$ such that the Hausdorff
correspondence between $X$ and~$T(Y)$ is an optimal correspondence that
minimizes the distortion.

Among many possible correspondences, the following correspondence is
particularly interesting when considering two Euclidean subsets. 
\begin{definition}[Nearest Neighbor Correspondence]\label{def:gh-cnn}
  \index{nearest neighbor correspondence}
  For any two compact subsets~$X,Y\subset\R^N$, we define the \bemph{nearest
  neighbor correspondence} $\C_{NN}$ to be the relation defined by the nearest
  neighbors of points of $X$ and $Y$. More precisely,
  \begin{align*}
    \C_{NN}=\{(x,y)\in X\times Y\mid(x\text{ is a nearest neighbor of }y
    \text{ in }X) \\ 
    \text{ or }(y\text{ is a nearest neighbor of }x\text{ in }Y)\}.
  \end{align*}
\end{definition}
Since $X$ and $Y$ are compact, the nearest neighbors exist. Hence, the
induced relation is a correspondence. Now follows an important structural
property concerning crossings of a correspondence.
\begin{definition}[Crossing]\label{def:cross} For a correspondence
$\C\in\C(X,Y)$, we say a pair of edges $(x_1,y_1)$ and $(x_2,y_2)$ is a
\bemph{crossing} if they cross in the usual sense, i.e., either of the following
happens $x_1<x_2$ but $y_1>y_2$ or $x_1>x_2$ but $y_1<y_2$; see \figref{visual}.
\end{definition}

\begin{lemma}[Crossing]\label{lem:gh-cnn}
    For any two compact $X,Y\subset\R^1$, the nearest neighbor correspondence
    $\C_{NN}(X,Y)$ is free of crossings.
\end{lemma}
  
\begin{proof}
  Let us consider two edges $e=(x,y)$ and $e'=(x',y')$ in $\C_{NN}(X,Y)$
  with~$x<x'$. Without loss of generality, we assume that $y$ is a nearest
  neighbor of $x$. In order to show that $e$ cannot cross $e'$, we assume the
  contrary: $y'<y$. Now, we consider the following cases based on the position
  of $y$ with respect to~$x$. In each case we show that neither $y'$ is a
  nearest neighbor of $x'$ nor $x'$ is a nearest neighbor of $y'$. This would
  contradict the fact that $(x',y')\in\C(X,Y)$.

  \begin{case}[$y\leq x$]
    In this case, a nearest neighbor of $x'$ cannot be smaller than $y$. Hence,
    $y'$ cannot be its nearest neighbor. Also for $y'<y$, its nearest neighbor
    has to be also smaller than $x$, hence $x'$ cannot be its nearest neighbor
    either.
  \end{case}
  
  \begin{case}[$x<y$]
    A nearest neighbor of $x'$ cannot be smaller than $y$. Hence,~$y'$ cannot be
    its nearest neighbor. Since $y'<y$, we have $y'\leq x$ in this case.
    Therefore, a nearest neighbor $y'$ has to be smaller than $x$. So, $x'$
    cannot be its nearest neighbor.
  \end{case}
\end{proof}

We wrap up this section with our final result of this section in the
following theorem.
\begin{theorem}\label{thm:gh-cnn}
  For $d\geq1$, there exist compact subsets $X,Y\subset\R^d$ such that 
  $\C_{NN}(X,T(Y))$ is not an optimal correspondence for any
  Euclidean isometry $T\in\mathcal{E}(\R^d)$.
\end{theorem}
\begin{proof}
  For $d\geq2$, we refer the readers to \exref{gh-2d} and \figref{gh-2d}. We
  also note here that there does not exist any non-trivial Euclidean isometry
  $T$ such that $\C_{opt}$ becomes the nearest neighbor correspondence.
  
  In the case $d=1$, we use $X,Y$ from \exref{gh-1d} and \figref{gh_vs_h}.
  Here,~$\C_{opt}$ must have crossings---even when one considers the reflection
  of $Y$. By \lemref{gh-cnn}, $\C_{opt}$ cannot be produced by any nearest
  neighbor correspondence.
\end{proof}

\section{Approximating Gromov-Hausdorff Distance in $\R^1$}
\label{sec:1d} 
This section is devoted to our main result (\thmref{gh}) on approximating the
Gromov-Hausdorff distance between subsets of the real line. Unless stated
otherwise, in this section we always assume that $X,Y$ are compact subsets
of~$\R^1$ and both are equipped with the standard Euclidean metric denoted by
$\mod{\cdot}$.

\begin{definition}[Standard Configuration]\label{def:standard}
  Let $\C$ be any correspondence between two compact subsets $X,Y$ of $\R^1$.
  There exist $(x,y),(x',y')\in\C$ such that~$\bigmod{\mod{x-x'}-\mod{y-y'}}=D$,
  where $D$ is the distortion of $\C$. Without loss of generality, we assume
  that $x\leq x'$ and $\mod{x-x'}\leq\mod{y-y'}$. Then, there exists an
  $\R^1$-isometry such that, when applied on $Y$, $(x,y)$ and $(x',y')$ are
  arranged as in \figref{visual}(b). For the proofs that follow, we assume this
  standard configuration for any given correspondence $\C$, and denote by
  $\{(x,y),(x',y')\}$ a fixed pair that has this property.
\end{definition}
  
We have already noted that  $d_{GH}(X,Y)\leq d_{H,iso}(X,Y)$ for any compact
$X,Y\subset\R^d$; see \cite{memoli_gromov-hausdorff_2008}. Together with this,
\thmref{gh} thus gives us the approximation algorithm for $d_{GH}$ with an
approximation factor of $\left(1+\frac{1}{4}\right)$. Later in \thmref{lb}, we
also show that the upper bound of \thmref{gh} is tight.
\begin{theorem}[Approximation of Gromov-Hausdorff Distance]\label{thm:gh}
  For any two compact $X,Y\subset\R^1$ we have
  $$d_{H,iso}(X,Y)\leq\frac{5}{4}d_{GH}(X,Y).$$
\end{theorem}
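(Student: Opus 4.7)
By the correspondence formulation of $d_{GH}$, for any $\epsilon>0$ we may choose a correspondence $\mathcal{C}$ between $X$ and $Y$ with $Dist(\mathcal{C})=D\le 2\,d_{GH}(X,Y)+\epsilon$. It would suffice to exhibit an isometry $T\in\mathcal{E}(\mathbb{R}^1)$ with $d_H(X,T(Y))\le \tfrac{5D}{8}$, since letting $\epsilon\to 0$ then yields $d_{H,iso}(X,Y)\le \tfrac{5}{4}\,d_{GH}(X,Y)$. My plan is to take $T$ to be the standard-configuration isometry associated to a carefully chosen witness pair $(x',y'),(x,y)\in\mathcal{C}$, so that $T(y')=x'-\tfrac{D}{2}$ and $T(y)=x+\tfrac{D}{2}$, with $x\ge x'$ and $|y-y'|=(x-x')+D$.

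For each $x_0\in X$, I would pick some $y_0$ with $(x_0,y_0)\in\mathcal{C}$; the natural candidates in $T(Y)$ for bounding $\operatorname{dist}(x_0,T(Y))$ are $T(y_0)$, $T(y)$, and $T(y')$. The argument then splits on the position of $x_0$ relative to $[x',x]$. In the \emph{interior} case $x_0\in[x',x]$, intersecting the two distortion inequalities $\bigl||x_0-x|-|y_0-y|\bigr|\le D$ and $\bigl||x_0-x'|-|y_0-y'|\bigr|\le D$ with the standard positions of $T(y),T(y')$ pins $T(y_0)$ down to $[x_0-\tfrac{D}{2},\,x_0+\tfrac{D}{2}]$, yielding $|x_0-T(y_0)|\le \tfrac{D}{2}\le \tfrac{5D}{8}$. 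In the \emph{moderately exterior} case $x_0>x$ with $s:=x_0-x\le \tfrac{9D}{8}$ (and symmetrically for $x_0<x'$), the estimate $|x_0-T(y)|=\bigl|s-\tfrac{D}{2}\bigr|\le \tfrac{5D}{8}$ does the job directly.

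The crux is the \emph{far exterior} case $s>\tfrac{9D}{8}$, where neither $T(y)$ nor $T(y')$ lies close to $x_0$ and one must fall back on $T(y_0)$. Here the plan is to pre-select the witness pair so as to maximize $|y-y'|$ among all pairs of $\mathcal{C}$ realizing the distortion $D$ (swapping the roles of $X$ and $Y$, and picking orientation, to enforce the WLOG $x\ge x'$ and $(x-x')\le|y-y'|$). With this extremal choice, the ``crossing'' sub-configurations in which $T(y_0)$ would lie to the left of $T(y')$ or inside $[T(y'),T(y)]$ can be excluded, because any such configuration combined with $s>\tfrac{9D}{8}$ would manufacture a pair in $\mathcal{C}$ of distortion strictly greater than $D$, contradicting the extremality. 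This forces $T(y_0)\ge T(y)$, and the same interval-intersection analysis as in the interior case then gives $|x_0-T(y_0)|\le \tfrac{D}{2}$. A symmetric argument with the roles of $X$ and $Y$ interchanged controls $\operatorname{dist}(T(y_0),X)$ for every $y_0\in Y$, completing $d_H(X,T(Y))\le \tfrac{5D}{8}$.

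The hard part will be the far-exterior case: extracting the desired contradiction from the extremal choice of the witness pair requires a delicate tracking of exactly where $T(y_0)$ can land in each crossing sub-configuration. The tight constant $\tfrac{5}{4}$ emerges naturally from the crossover at $s=\tfrac{9D}{8}$, at which the moderately-exterior bound $|s-\tfrac{D}{2}|$ hits $\tfrac{5D}{8}=\tfrac{5}{4}\cdot\tfrac{D}{2}$, matching the interior bound scaled by $\tfrac{5}{4}$.
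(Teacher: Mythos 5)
Your reduction (pass to a near-optimal correspondence of distortion $D$ and exhibit one isometry $T$ with $d_H(X,T(Y))\leq\frac{5D}{8}$) matches the paper's, and your interior and moderately-exterior estimates are correct. The gap is the far-exterior case, and it is not merely ``delicate'': the exclusion you rely on there is false. Concretely, in the standard configuration $T(y')=x'-\frac{D}{2}$, $T(y)=x+\frac{D}{2}$, consider a point $p_0\in X$ with $p_0=x+\frac{3D}{2}$ whose partner $q_0$ satisfies $T(q_0)=x$, i.e.\ $T(q_0)$ lies \emph{inside} $[T(y'),T(y)]$. Then $\left||p_0-x|-|q_0-y|\right|=D$ and $\left||p_0-x'|-|q_0-y'|\right|=D$, so this configuration is perfectly consistent with $Dist(\mathcal{C})=D$; no pair of distortion $>D$ is ``manufactured,'' and with $x-x'$ moderately large the pair $(x',y'),(x,y)$ can remain the extremal witness pair in your sense. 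For such a $p_0$ the nearest point of $T(Y)$ that you can certify is $T(y)$, at distance $D>\frac{5D}{8}$. This is exactly the structure of the paper's tightness construction (\thmref{lb}), where in the standard configuration $d_H(X,Y)=D$ and the near-optimal alignments occur only after translating $Y$ by roughly $\frac{3D}{8}$ \emph{beyond} the standard configuration. So a single, a-priori standard-configuration isometry cannot achieve $\frac{5D}{8}$, no matter how the witness pair is selected.

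What the paper does instead is make the isometry depend on the global structure of $\mathcal{C}$: it classifies correspondences by their crossing pattern relative to the witness edges and (i) in the no-double-crossing case applies an \emph{additional} translation $\Delta=\frac{3}{4}\eps$, where $\eps$ measures how far matched points of $X$ extend beyond $x+D$, obtaining only $\frac{D}{2}+\frac{\Delta}{3}\leq\frac{5D}{8}$ rather than $\frac{D}{2}$ (\thmref{no-cross}); (ii) in the presence of a wide double crossing it must \emph{reflect} $Y$ before translating (\thmref{wide-cross}) --- no translation of the given orientation suffices there; and (iii) handles the remaining non-wide double crossings separately (\thmref{no-wide}). Your proposal contains neither the data-dependent shift nor the reflection, and the constant $\frac{9D}{8}$ you isolate is not where $\frac{5}{8}$ actually comes from; in the paper it arises from balancing $\frac{D}{2}+\eps-\Delta$ against $\frac{D}{2}+\frac{\Delta}{3}$ at $\Delta=\frac{3\eps}{4}$ together with the bound $\eps\leq\frac{D}{2}$. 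To repair your argument you would need, at minimum, to let $T$ be the standard configuration composed with a translation (and possibly a flip) chosen as a function of the outlying matched points, which is essentially the paper's proof.
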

\begin{proof}    
  In order to prove the result, it suffices to show that for any correspondence
  $\C\in\C(X,Y)$ with $Dist(\C)=D$, there exists a Euclidean isometry
  $T\in\mathcal{E}(\R^1)$ such that
  $$d_H(X,T(Y))\leq\frac{5D}{8}.$$ Depending on the crossing (\defref{cross})
   behavior, we classify a given correspondence into three main types: no double
   crossing (\defref{double-cross}), wide crossing (\defref{wide-cross}), and no
   wide crossing, and we divide the proof for each type into \thmref{no-cross},
   \thmref{wide-cross}, and \thmref{no-wide}, respectively.
\end{proof}

\subsection{No Double Crossings}
We start with the definition of a double crossing edge.
\begin{definition}\label{def:double-cross} An edge in a correspondence
$\C\in\C(X,Y)$ is said to be a \bemph{double crossing} if it crosses both the
edges $(x',y')$ and $(x,y)$ as defined in \defref{standard}; see \figref{cross}.
\end{definition}

In the following theorem, we consider the case when there is no double
crossing edge in $\C$.
\begin{theorem}[No Double Crossing]\label{thm:no-cross}
  For a correspondence $\C\in\C(X,Y)$ without any double crossing, there exists
  a value $\Delta\in\R$ such that
  $$d_H(X,Y+\Delta)\leq\frac{5D}{8},$$ where $D=Dist(\C)$.
\end{theorem}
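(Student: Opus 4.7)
My plan is to exhibit a single translation $\Delta \in \R$ and verify directly that every point of $X$ lies within $5D/8$ of some point of $Y+\Delta$, and symmetrically for $Y$. Unlike the naive approach of forcing $|a - b - \Delta| \leq 5D/8$ uniformly over $(a,b) \in \C$, which can fail since $|a - b|$ may reach $3D/2$ even without double crossing, the argument must let each $x_0 \in X$ be witnessed by some $y^\ast \in Y$ which is not necessarily the $\C$-partner of $x_0$.

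In the standard configuration I have $y' = x' - D/2$ and $y = x + D/2$, with $x' \leq x$. I partition $X$ into $X_L = X \cap (-\infty, x')$, $X_M = X \cap [x', x]$, $X_R = X \cap (x, \infty)$, and analogously $Y_L, Y_M, Y_R$ about $y', y$. The no-double-crossing hypothesis is exactly the combinatorial statement $\C \cap (X_L \times Y_R) = \C \cap (X_R \times Y_L) = \emptyset$, leaving seven of the nine region-pair combinations open for edges of $\C$.

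The principal tool is the distortion inequality applied to each $(x_0, b_0) \in \C$ against the two designated edges:
\begin{equation*}
\bigl| |x_0 - x'| - |b_0 - y'| \bigr| \leq D
\quad\text{and}\quad
\bigl| |x_0 - x| - |b_0 - y| \bigr| \leq D.
\end{equation*}
Case analysis on the seven open region pairs yields sharp linear constraints on $x_0 \pm b_0$. In the two corner cases $X_L \times Y_L$ and $X_R \times Y_R$ the two inequalities combine to $|x_0 - b_0| \leq D/2$, so $b_0$ itself witnesses $x_0$ for a wide interval of $\Delta$. In the middle-band cases (e.g.\ $X_L \times Y_M$) the partner $b_0$ may lie as far as $3D/2$ from $x_0$, but the constraints force $x_0$ close enough to the nearest designated endpoint (here $y'$) that translating by an appropriate $\Delta$ brings $y'$ within $5D/8$ of $x_0$. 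An analogous analysis handles each $y_0 \in Y$ using $x'$ or $x$ as witness when its $\C$-partner is far.

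The main obstacle is the calibration of $\Delta$: for each $x_0$ and each $y_0$ the above produces an interval of admissible translations, and I must show that the intersection of all such intervals is non-empty. This is precisely where the no-double-crossing hypothesis is needed --- extreme reaches of $\C$ into $X_L$ paired with $Y_M$ and extreme reaches into $X_R$ paired with $Y_M$ would push $\Delta$ in opposite directions, and the excluded corners $X_L \times Y_R$ and $X_R \times Y_L$ are exactly the configurations that would make these incompatible. I expect $\Delta$ to emerge as a midpoint of two such extremal intervals, and the verification to reduce to elementary interval arithmetic. Since \thmref{lb} asserts that the constant $5/4$ is tight, the calibration cannot afford to lose more than a margin of $D/8$, making the choice delicate but, under the no-double-crossing hypothesis, always feasible.
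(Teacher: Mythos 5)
Your setup is sound: the standard configuration, the characterization of double crossings as edges in $X_L\times Y_R$ or $X_R\times Y_L$, the observation that $|a-b|$ can reach $\tfrac{3D}{2}$ so witnesses other than $\C$-partners are needed, and the corner-case bound $|x_0-b_0|\le\tfrac{D}{2}$ are all correct and match the skeleton of the paper's argument. But what you have written is a plan, not a proof: the entire content of the theorem is the simultaneous satisfiability, with margin $\tfrac{5D}{8}$, of all the translation constraints, and you assert this ("I expect $\Delta$ to emerge as a midpoint\dots", "delicate but \dots always feasible") rather than establish it. The paper's proof consists almost entirely of that missing work: it isolates the extremal overhanging point $p_0=\max A$ (where $A$ collects points of $X$ beyond $x+D$ matched into $[y',y]$), sets $\eps=p_0-x-D$, proves $\eps\le\eps'$ and $\eps+\eps'\le D$ hence $\eps\le\tfrac{D}{2}$, commits to the specific translation $\Delta=\tfrac34\eps$, and then verifies $d_H(X,Y+\Delta)\le\tfrac{D}{2}+\tfrac{\Delta}{3}$ by an explicit partition of the line into intervals $\I_1,\dots,\I_4$ and $\J_1,\dots,\J_5$ with a designated witness on each piece. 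None of that calibration appears in your proposal.

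There are also two concrete obstructions to closing the argument as you describe it. First, your stated principal tool --- the distortion inequality of each edge against only the two designated edges $(x',y')$ and $(x,y)$ --- is provably insufficient. Those two inequalities alone permit a point $p\in X\cap(x+D,\infty)$ with $\eps=p-x-D$ up to $\tfrac{D}{2}$ and, simultaneously, a point $p'\in X\cap(-\infty,x'-D)$ with $\eps''=x'-D-p'$ up to $\tfrac{D}{2}$; these would demand translations to the right by at least $\eps-\tfrac{D}{8}$ and to the left by at least $\eps''-\tfrac{D}{8}$, which are incompatible. Ruling this out requires applying the distortion bound to the pair of non-designated edges $(p,q)$ and $(p',q')$ (which yields $\eps+\eps''\le 0$), and the paper likewise needs distortion against the extremal edge $(p_0,q_0)$ to control generic edges in $\I_1$ and $\J_5$. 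Second, your reduction to "elementary interval arithmetic" glosses over the fact that for a fixed point $x_0$ the set of admissible $\Delta$ is a union of intervals, one per candidate witness in $Y$, so a Helly-type intersection argument does not apply until you have committed to a single witness for each point; making that commitment consistently is exactly what the paper's interval partition does after $\Delta$ is fixed. Until you carry out both steps, the bound $\tfrac58 D$ is not established.
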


\begin{proof}
  In the trivial case, when $d_H(X,Y)\leq\frac{5D}{8}$, we take $\Delta=0$. We
  now assume the non-trivial case that $d_H(X,Y)>\frac{5D}{8}$. As shown in
  \figref{ub-all}, we define the following two subsets of $X$:
  \begin{equation}\label{eqn:A}
  \begin{split}
    A&=\left\{a\in X\cap\left(x+D,\infty\right)\mid
    \exists~b\in Y\cap[y',y]\mbox{ with }(a,b)\in\C\right\}, \\ 
    A'&=\left\{a\in X\cap\left(-\infty,x'-D\right)
    \mid\exists~b\in Y\cap[y',y]\mbox{ with }(a,b)\in\C\right\}.
  \end{split}
  \end{equation}
  \begin{figure}[thb]
    \centering \includegraphics[scale=0.8]{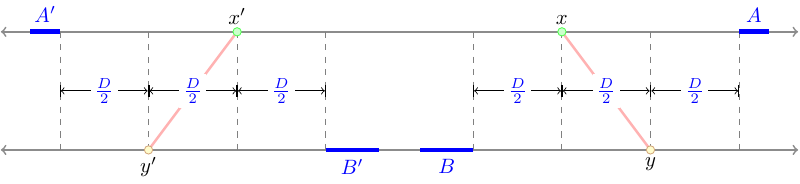} \caption[No Double
    Crossing]{The no double crossing case is shown. The sets $A,A'$ and $B,B'$
    are shown as subsets of the thick, blue intervals in the top and bottom.}
    \label{fig:ub-all}
  \end{figure}
  We also define the following two subsets of $Y$:
  \begin{equation}\label{eqn:B}
  \begin{split}
    B&=\left\{b\in Y\cap\left(y'+D,y-D\right)\mid\exists~a\in X\cap(x,\infty)
    \mbox{ with }
    (a,b)\in\C\right\},\\
    B'&=\left\{b\in Y\cap\left(y'+D,y-D\right)\mid\exists~a\in X\cap(-\infty,x')
    \mbox{ with }
    (a,b)\in\C\right\}.
  \end{split}
  \end{equation}
  As shown in \lemref{AB}, not all pairs of the above defined sets can be
  non-empty together. The assumption that $d_H(X,Y)>\frac{5D}{8}$ implies, again
  from \lemref{AB}, that one of the above sets must be non-empty. Without any
  loss of generality, it then suffices to study only the following three unique
  cases: 
  \begin{enumerate}
  \item $A\neq\emptyset, B=\emptyset$
  \item $A=\emptyset, B\neq\emptyset$
  \item $A\neq\emptyset, B\neq\emptyset$.
  \end{enumerate}
  For each of the cases, we show that a positive number $\Delta$ can be chosen
  such that $d_H(X,Y+\Delta)\leq\frac{5D}{8}$.
  \begin{case}[$A\neq\emptyset,B=\emptyset$]
  We denote $p_0=\max{A}$ and $\eps=p_0-x-D$. We also let $q_0\in Y\cap[y',y]$
  such that $(p_0,q_0)\in\C$; let $\eps'=(y-q_0)$; see \figref{ub-case-1}. From
  the assumption that $d_H(X,Y)>\frac{5D}{8}$, we first note that
  $\eps>\frac{D}{8}$. We also argue that 
  \begin{equation}\label{eqn:eps}
    \eps\leq\eps'\leq D-\eps,\text{ and }\eps\leq\frac{D}{2}.
  \end{equation} 
  \begin{figure}[thb]
    \centering \includegraphics[scale=0.8]{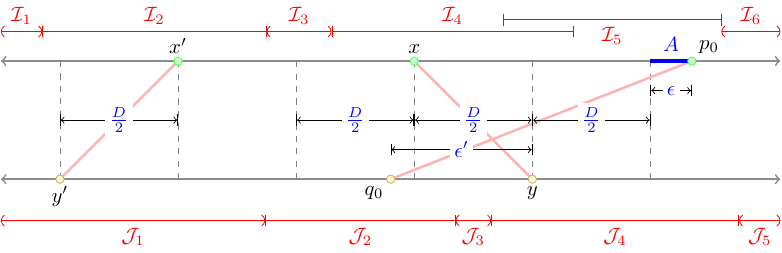} \caption[No Double
    Crossing]{No double crossing Case (1) is shown. The set $A$ is a subset
    of the thick, blue interval in the top.}
    \label{fig:ub-case-1}
  \end{figure}
  To see \eqnref{eps}, we start by observing that $(p_0-x')\geq(q_0-y')$. From
  the observation together with the distortion of the pair $(x',y')$ and
  $(p_0,q_0)$, we get
  $$D\geq\mod{(p_0-x')-(q_0-y')}=(p_0-x')-(q_0-y')=(p_0-q_0)-(x'-y')=\eps+\eps'.$$
  So, $\eps'\leq D-\eps$. In particular, $\eps'\leq D$. So, from the distortion
  of the pair $(x,y)$ and $(p_0,q_0)$, we also get
  $$D\geq\mod{(p_0-x)-(y-q_0)}=\mod{(D+\eps)-\eps'}=D+\eps-\eps'.$$ This implies
  that $\eps\leq\eps'$. As a result, we also have 
  $$\eps\leq\frac{1}{2}(\eps+\eps)\leq\frac{1}{2}(\eps'+\eps)\leq\frac{D}{2}.$$ 

  We now show that $\overrightarrow{d}_H(X,Y+\Delta)\leq\frac{5D}{8}$, for any
  translation amount $\Delta$ such that
  \begin{equation}\label{eqn:delta-1}
    \begin{cases}
    \Delta=\eps-\frac{D}{8},\text{ when }\frac{D}{8}<\eps\leq\frac{D}{4} \\
    \Delta\in\left[\eps-\frac{D}{8},\eps+\eps'-\frac{3D}{8}\right],
    \text{ when }\frac{D}{4}<\eps\leq\frac{D}{2}.
    \end{cases}
  \end{equation}
  Due to \eqnref{eps} and the fact that $\eps>\frac{D}{8}$, we immediately note
  from \eqnref{delta-1} that $$0<\Delta\leq\frac{5D}{8}.$$ In order to cover
  $X$, we consider the following (possibly overlapping) intervals to cover the
  real line:
  \begin{align*}
  \I_1 &= \left(-\infty,y'-\frac{5D}{8}+\Delta\right),
  \I_2 = \left[y'-\frac{5D}{8}+\Delta,y'+\frac{5D}{8}+\Delta\right], \\
  \I_3 &= \left(y'+\frac{5D}{8}+\Delta,q_0-\frac{5D}{8}+\Delta\right), 
  \I_4 = \left[q_0-\frac{5D}{8}+\Delta,q_0+\frac{5D}{8}+\Delta\right], \\
  \I_5 &= \left[y-\frac{5D}{8}+\Delta,y+\frac{5D}{8}+\Delta\right], 
  \text{ and }
  \I_6 = \left(y+\frac{5D}{8}+\Delta,\infty\right).
  \end{align*}
  Since $y-q_0=\eps'\leq D$ from \eqnref{eps}, the intervals $\I_4$ and $\I_5$
  intersect. So, the union of the intervals is the entire real line. For an
  arbitrary point $a\in X$ from any of the above intervals, we show that there
  exists a point $b\in Y$ such that $\mod{a-(b+\Delta)}\leq\frac{5D}{8}$. The
  intervals $\I_2$, $\I_4$, and $\I_5$ are \emph{covered} by $y'$, $q_0$, and
  $y$, respectively. So, we present our argument only for $\I_1,\I_3$, and
  $\I_6$.
  
  \paragraph{($\pmb{\I_1}$)} Let $a\in\I_1\cap X$ and $b\in Y$ such that
  $(a,b)\in\C$. From $\Delta\leq\frac{5D}{8}$, it follows that $a<y'$. We first
  note that $(a,b)$ cannot cross $(x,y)$, since there is no double crossing. We
  next argue that $(a,b)$ does not cross $(p_0,q_0)$, either. We assume the
  contrary that $(a,b)$ crosses $(p_0,q_0)$, consequently $b\in[q_0,y]$. So, we
  get $(b-q_0)\leq\eps'$. And, the distortion of the pair $(a,b)$ and
  $(p_0,q_0)$ yields the following contradiction:
  \begin{align*}
  D&\geq\bigmod{\mod{p_0-a}-\mod{q_0-b}}
  =\mod{(x+D+\eps-a)-(b-q_0)}\\
  &=\mod{(x-a)+D+\eps-(b-q_0)}\\
  &=(x-a)+D+\eps-(b-q_0),\text{ since }(b-q_0)\leq\eps'\leq D
  \text{ from \eqnref{eps}}\\
  &=[(x-x')+(x'-y')+(y'-a)]+D+\eps-\eps' \\
  &\geq(x'-y')+(y'-a)+D+\eps-\eps',\text{ since }(x-x')\geq0 \\
  &=\frac{D}{2}+(y'-a)+D+\eps-\eps' \\
  &\geq\frac{D}{2}+\left(\frac{5D}{8}-\Delta\right)+D+\eps-\eps',\text{ since }a\in\I_1\\
  &=D+\left(\eps-\eps'+\frac{9D}{8}-\Delta\right)\\
  &>D,\text{ since the second term is positive by \lemref{eps}}.
  \end{align*}
  Now that we have $(a,b)$ not crossing $(p_0,q_0)$, we show that
  $\mod{a-(b+\Delta)}\leq\frac{5D}{8}$. If $b\geq a$, then from the distortion
  bound of $D$ for the pair of (non-crossing) edges $(a,b)$ and $(p_0,q_0)$, we
  get $b-a\leq\frac{D}{2}-(\eps+\eps')$. From \eqnref{delta-1}, it is evident
  that $\Delta\leq\eps+\eps'$. So,
  $$\mod{a-(b+\Delta)}=\Delta+(b-a)\leq\Delta+\left[\frac{D}{2}-(\eps+\eps')\right]
  \leq\frac{D}{2}.$$ If $b\leq a$, then from the distortion bound of $D$ for the
  pair of (non-crossing) edges $(a,b)$ and $(x,y)$, we have
  $a-b\leq\frac{D}{2}$. Since we have already observed
  that~$\Delta\leq\frac{5D}{8}$, we also have
  $\mod{a-(b+\Delta)}\leq\frac{5D}{8}$. In either case, we conclude that
  $\mod{a-(b+\Delta)}\leq\frac{5D}{8}$.

  \paragraph{($\pmb{\I_3}$)} We note here that the distance between the
  endpoints of $\I_3$ is
  \begin{align*}  
    &\left(q_0-\frac{5D}{8}+\Delta\right)-\left(y'+\frac{5D}{8}+\Delta\right) \\
    &=(q_0-y')-\frac{10D}{8} \\
    &=(y-\eps')-y'-\frac{10D}{8} \\
    &=(y-y')-\eps'-\frac{10D}{8} \\
    &=(x-x'+D)-\eps'-\frac{10D}{8} \\
    &=(x-x')-\left(\eps'+\frac{D}{4}\right)
  \end{align*}
  So, $\I_3\neq\emptyset$ if and only if $x-x'>\eps'+\frac{D}{4}$. We also note
  similarly that $\I_3\subseteq[x',x]$.  If $\I_3$ is empty, there is nothing
  show. Let us assume that  it is non-empty. 
  
  Let $a\in\I_3\cap X$ and $b\in Y$ such that $(a,b)\in\C$. Since $a\in[x',x]$,
  the edge $(a,b)$ cannot cross $(x,y)$ or $(x',y')$ because of
  \lemref{standard}. Moreover, $\mod{b-a}\leq\frac{D}{2}$. We now argue that
  $(a,b)$ does not cross $(p_0,q_0)$ either. We assume the contrary that the
  edge $(a,b)$ crosses $(p_0,q_0)$, i.e., $q_0<b\leq y$. So, we get the
  following contradiction:
  \begin{align*}
  D&\geq\bigmod{\mod{p_0-a}-\mod{q_0-b}}
  =\mod{(x+D+\eps-a)-(b-q_0)}\\
  &=\mod{(x-a)+D+\eps-(b-q_0)}\\
  &=(x-a)+D+\eps-(b-q_0),\text{ since }(b-q_0)\leq\eps'\leq D
  \text{ from \eqnref{eps}}\\
  &=\left(y-\frac{D}{2}\right)-a+D+\eps-(b-q_0)\\
  &=(q_0+\eps')-\frac{D}{2}-a+D+\eps-(b-q_0)\\
  &=2q_0+\eps'+\frac{D}{2}-a+\eps-b\\
  &\geq2q_0+\eps'+(b-a)-a+\eps-b,
  \text{ since }\frac{D}{2}\geq(b-a)\\
  &=2(q_0-a)+\eps+\eps' \\
  &>2\left(\frac{5D}{8}-\Delta\right)+\eps+\eps',\text{ since }a\in\I_3 \\
  &=D+\left[\frac{D}{4}-2\Delta+\eps+\eps'\right] \\
  &\geq D,\text{ since the second term is non-negative by \lemref{eps-1}}.
  \end{align*}
  Hence, $(a,b)$ does not cross $(p_0,q_0)$. Therefore, by arguing similar to
  $\I_1$ we conclude $\mod{a-(b+\Delta)}\leq\frac{5D}{8}$.
   
  \paragraph{($\pmb{\I_6}$)} For $a\in \I_6\cap X$, we get
  \begin{align*}
  a &>y+\frac{5D}{8}+\Delta=\left(p_0-\eps-\frac{D}{2}\right)+\frac{5D}{8}
  +\Delta \\ &
  =p_0+\frac{D}{8}-\eps+\Delta \\
  &\geq p_0,\text{ since }\Delta\geq\eps-\frac{D}{8}\text{ by \eqnref{delta-1}}\\
  &=\max A.
  \end{align*}
  By the definition of the set $A$, an edge $(a,b)\in\C$ cannot cross $(x,y)$.
  For this reason, it cannot cross $(p_0,q_0)$ either. The rest of the argument
  presented for~$\I_1$ then goes through. Therefore,
  $\mod{a-(b+\Delta)}\leq\frac{5D}{8}$ by arguing similar to $\I_1$.
  
  In order to show the other direction, i.e.,
  $\overrightarrow{d_H}(Y+\Delta,X)\leq\frac{5D}{8}$, we argue that there exists
  a translation amount $\Delta$ satisfying \eqnref{delta-1}. To that end, we
  consider the following (possibly overlapping) partition of the real line into
  intervals:
  \begin{align*}
  \J_1 &= \left(-\infty,x-\frac{5D}{8}-\Delta\right),
  \J_2 = \left[x-\frac{5D}{8}-\Delta,x+\frac{5D}{8}-\Delta\right], \\
  \J_3 &= \left(x+\frac{5D}{8}-\Delta,p_0-\frac{5D}{8}-\Delta\right),
  \J_4 = \left[p_0-\frac{5D}{8}-\Delta,p_0+\frac{5D}{8}-\Delta\right],
  \mbox{ and } \\
  \J_5 &= \left(p_0+\frac{5D}{8}-\Delta,\infty\right).
  \end{align*}
  For an arbitrary point $b\in Y$ from any of the above intervals, we now show
  that there exists a point $a$ in $X$ such that
  $\mod{a-(b+\Delta)}\leq\frac{5D}{8}$. The intervals $\J_2$ and $\J_4$ are
  \emph{covered} by $x$ and $p_0$, respectively. We present our argument only
  for $\J_1,\J_3$, and $\J_5$. While the argument for $\J_1$ and $\J_5$ goes
  through for any $\Delta$ satisfying \eqnref{delta-1}, the interval $\J_3$ may
  require a more particular choice of $\Delta$.
  
  \paragraph{($\pmb{\J_1}$)} For any $b\in \J_1\cap Y$ with an edge
  $(a,b)\in\C$, we argue that the edge cannot cross $(p_0,q_0)$. Since
  $\Delta>0$, we note that either $b\in[y',y-D)$ or $b<y'$. If $b\in[y',y-D]$,
  the edge $(a,b)$ cannot cross $(x,y)$, since $B=\emptyset$. If $b<y'$, the
  edge $(a,b)$ cannot cross $(x,y)$, as no double crossing is allowed. Now,
  $b<y-D\leq q_0$ implies that the edge $(a,b)$ cannot cross $(p_0,q_0)$ without
  first crossing $(x,y)$. The rest of the argument presented for $\I_1$ then
  goes through. Therefore, $\mod{a-(b+\Delta)}\leq\frac{5D}{8}$ by arguing
  similar to $\I_1$.
  
  \paragraph{($\pmb{\J_3}$)} We observe that $\J_3\neq\emptyset$ if and only if
  $\eps>\frac{D}{4}$. There is nothing to show if $\J_3$ is empty. So, we assume
  that $\eps>\frac{D}{4}$, and claim that there must exist a $\Delta$ in
  $\left[\eps-\frac{D}{8},\eps+\eps'-\frac{3D}{8}\right]$, hence satisfying
  \eqnref{delta-1}, such that for any $b\in Y\cap\J_3$ there exists $a\in X$
  with $\mod{a-(b+\Delta)}\leq\frac{5D}{8}$. 
  
  We argue by contradiction. Let us assume that no such $\Delta$ exists in given
  the interval, i.e., for all
  $\Delta\in\left[\eps-\frac{D}{8},\eps+\eps'-\frac{3D}{8}\right]$ the following
  subset of $\J_3$ is non-empty:
  $$
    E(\Delta)=\left\{b\in\J_3\mid X\cap\left[b-\frac{5D}{8}+\Delta,b+\frac{5D}{8}+\Delta\right]=\emptyset\right\}.
  $$
  Let us also define the following subsets of $Y$:
  $$
  Y_\leq=\left\{b\in Y\mid\text{there exists an edge }(a,b)\in\C\text{ with }a\leq b\right\}
  $$
  and
  $$
  Y_\geq=\left\{b\in Y\mid\text{there exists an edge }(a,b)\in\C\text{ with }a\geq b\right\}.
  $$
  We note from \lemref{LR} that $E(\Delta)\cap Y_\leq$ and $E(\Delta)\cap
  Y_\geq$ cannot be both non-empty for any given $\Delta$. Therefore,
  $E(\Delta)\neq\emptyset$ for all $\Delta$ implies
  \begin{enumerate}[i)]
    \item either $E(\Delta)\cap Y_\leq$ is empty and $E(\Delta)\cap Y_\geq$
    non-empty for all $\Delta$, or
    \item $E(\Delta)\cap Y_\leq$ is non-empty and $E(\Delta)\cap Y_\geq$ empty
    for all $\Delta$. 
  \end{enumerate}
  In order to arrive at a contradiction, we now show, however, that
    $E(\Delta)\cap Y_\geq$ is empty when $\Delta=\eps-\frac{D}{8}$, whereas
    $E(\Delta)\cap Y_\leq$ is empty when $\Delta=\eps+\eps'-\frac{3D}{8}$.
  
  When $\Delta=\eps-\frac{D}{8}$, we show that $E(\Delta)\cap Y_\geq=\emptyset$.
  Consider any $b\in\J_3\cap Y_\geq$ and an edge $(a,b)\in\C$ such that $a\geq
  b$. Due to the distortion bound of $D$ for the pair of edges $(x,y)$ and
  $(a,b)$, we have $(a-x) \leq D + (y-b)$. So,
  \begin{align*}  
    a-(b+\Delta)
    &\leq (x + D + y -b )- (b+\Delta) \\
    &\leq x + D + y -2b-\Delta \\
    &< x + D + y-2\left(x+\frac{5D}{8}-\Delta\right)-\Delta,\text{ since }b\in\J_3 \\
    &=D + (y - x) -\frac{10D}{8}+\Delta \\
    &=D+\frac{D}{2}-\frac{10D}{8}+\Delta \\
    &=\frac{D}{4}+\left(\eps-\frac{D}{8}\right) \\
    &=\eps+\frac{D}{8}\leq\frac{5D}{8},\text{ since }\eps\leq\frac{D}{2}
    \text{ from \eqnref{eps}}.
  \end{align*}
  So, $b\notin E(\Delta)$. Therefore, $E(\Delta)\cap Y_\geq=\emptyset$ for
  $\Delta=\eps-\frac{D}{8}$. 
  
  We show now that $E(\Delta)\cap Y_\leq=\emptyset$ when
  $\Delta=\eps+\eps'-\frac{3D}{8}$. To see this, consider $b\in \J_3\cap Y_\leq$
  and an edge $(a,b)\in\C$ such that $a\leq b$. Due to the distortion bound of
  $D$ for the pair of edges $(p_0,q_0)$ and $(a,b)$, we have $(p_0-a) \leq D +
  (b-q_0)$. So,
  \begin{align*}
    (b+\Delta)-a 
    &\leq(b+\Delta) + D + (b-q_0) - p_0\\    
    &\leq\Delta + D + 2b-q_0- p_0\\
    &<\Delta+D+2\left(p_0-\frac{5D}{8}-\Delta\right)-q_0-p_0,\text{ since }b\in\J_3  \\
    &=(p_0-q_0)-\frac{2D}{8}-\Delta \\
    &=\left(\eps+\frac{D}{2}+\eps'\right)-\frac{2D}{8}-\left(\eps+\eps'-\frac{3D}{8}\right) \\
    &=\frac{5D}{8}.
  \end{align*}
  So, $b\notin E(\Delta)$. Therefore, $E(\Delta)\cap Y_\leq=\emptyset$ for
  $\Delta=\eps+\eps'-\frac{3D}{8}$. This is a contradiction. Hence, there must
  exist $\Delta$ satisfying \eqnref{delta-1}.
  
  \paragraph{($\pmb{\J_5}$)} Similarly for $b\in\J_5\cap Y$, an edge
  $(a,b)\in\C$ cannot cross $(x,y)$ due to \lemref{standard-1}. Moreover, it
  cannot cross $(p_0,q_0)$. If it did, i.e., $a\in[x,p_0]$, the distortion of
  the pair would exceed $D$:
  \begin{align*}
    &\mod{(b-q_0)-(p_0-a)} \\
    &\geq(b-q_0)-p_0+\left(b-\frac{D}{2}\right),
    \text{ as }(b-a)\leq\frac{D}{2},\text{ from \lemref{standard-1}} \\
    &=(b-q_0)+(b-p_0)-\frac{D}{2}=[(b-p_0)+(p_0-q_0)]+(b-p_0)-\frac{D}{2} \\
    &=(p_0-q_0)+2(b-p_0)-\frac{D}{2}
    =\left(\eps+\eps'+\frac{D}{2}\right)+2(b-p_0)-\frac{D}{2} \\
    &>\left(\eps+\eps'+\frac{D}{2}\right)+2\left(\frac{5D}{8}-\Delta\right)
    -\frac{D}{2},\text{ as }b\in\J_5 \\
    &=\frac{5D}{4}+\eps+\eps'-2\Delta\geq\frac{5D}{4}+\eps+\eps'
    -2\left(\eps+\eps'-\frac{3D}{8}\right),\text{ from \eqnref{delta-1}} \\
    &=2D-(\eps+\eps')\geq D,\text{ as }\eps+\eps'\leq D\text{ from \eqnref{eps}}. 
  \end{align*}
  So, $(a,b)$ does not cross $(p_0,q_0)$. Hence, following the argument
  presented in $\I_1$, we conclude $\mod{a-(b+\Delta)}\leq\frac{5D}{8}$.
  \end{case}
  
  \begin{case}[$A=\emptyset,B\neq\emptyset$]
  We denote $q_1=\min{B}$ and $\eta=y-D-q_1$. We also let $p_1\in
  X\cap(x,\infty)$ such that $(p_1,q_1)\in\C$ and $\eta'=p_1-x$. From the
  assumption that $d_H(X,Y)>\frac{5D}{8}$, we first note that
  $\eta>\frac{D}{8}$. We also argue that 
  \begin{figure}[thb]
    \centering \includegraphics[scale=0.8]{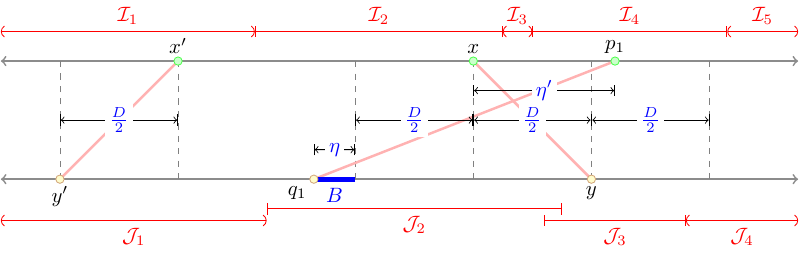} \caption[No Double
    Crossing]{No double crossing Case (2) is shown. The set $B$ is a subset of
    the thick, blue interval in the bottom.}
    \label{fig:ub-case-2}
  \end{figure}
  \begin{equation}\label{eqn:eta}
    \eta\leq\eta'\leq D-\eta,\text{ and }\eta\leq\frac{D}{2}.
  \end{equation} 

  To see \eqnref{eta}, we first observe that $(p_1-x')\geq(q_1-y')$. From the
  observation together with the distortion of the pair $(x',y')$ and
  $(p_1,q_1)$, we get
  \begin{align*}
  D&\geq\mod{(p_1-x')-(q_1-y')}=(p_1-x')-(q_1-y'),\mbox{ since }(p_1-x')
  \geq(q_1-y')\\
  &=\left[(x-x')+\eta'\right]-\left[\frac{D}{2}+(x-x')-\frac{D}{2}
  -\eta\right]=\eta'+\eta.
  \end{align*}
  In particular, $\eta'\leq D$. Now from the distortion of the pair $(x,y)$ and
  $(p_1,q_1)$, we also get
  $$D\geq\mod{(y-q_1)-(x-p_1)}=\mod{D+\eta-\eta'}=D+\eta-\eta'.$$ This implies
  that $\eta'\geq\eta$. Combining this with $\eta+\eta'\leq D$, we get
  $\eta\leq\frac{D}{2}$. Comparing \eqnref{eps} and \eqnref{eta}, we note that
  $\eta$ and $\eta'$  are analogous to $\eps$ and $\eps'$ from Case ($1$),
  respectively. Also, the edge $(p_1,q_1)$ is analogous to the edge $(p_0,q_0)$
  in Case ($1$). Using this analogy, we can reuse the arguments presented in
  Case ($1$) for some of the intervals here.

  We now show that $\overrightarrow{d}_H(Y+\Delta,X)\leq\frac{5D}{8}$ for any
  translation amount $\Delta$ such that
  \begin{equation}\label{eqn:delta-2}
    \begin{cases}
    \Delta=\eta-\frac{D}{8},\text{ when }\frac{D}{8}<\eta\leq\frac{D}{4} \\
    \Delta\in\left[\eta-\frac{D}{8},\eta+\eta'-\frac{3D}{8}\right],
    \text{ when }\frac{D}{4}<\eta\leq\frac{D}{2}.
    \end{cases}  
  \end{equation}
  Due to \eqnref{eta} and the fact that $\eta>\frac{D}{8}$, we immediately note
  that $$0<\Delta\leq\frac{5D}{8}.$$ We consider the following intervals:
  \begin{align*}
  \J_1 &= \left(-\infty,x-\frac{5D}{8}-\Delta\right),
  \J_2 = \left[x-\frac{5D}{8}-\Delta,x+\frac{5D}{8}-\Delta\right], \\
  \J_3 &= \left[p_1-\frac{5D}{8}-\Delta,p_1+\frac{5D}{8}-\Delta\right],
  \text{ and }\J_4 = \left(p_1+\frac{5D}{8}-\Delta,\infty\right).
  \end{align*}
  Since $p_1-x=\eta'\leq D$ from \eqnref{eta}, the intervals $\J_2$ and $\J_3$
  intersect. So, the union of the intervals is the entire real line. For an
  arbitrary $b\in Y$ from any of the above intervals, we show there exists a
  point $a\in X$ such that $\mod{a-(b+\Delta)}\leq\frac{5D}{8}$ for any
  translation $\Delta$ satisfying \eqnref{delta-2}. The intervals $\J_2$ and
  $\J_3$ are \emph{covered} by $x$ and $p_1$, respectively. The nearest neighbor
  argument for the intervals $\J_1$ and~$\J_4$ are analogous to the intervals
  $\I_6,\I_3$ as presented in Case (1), respectively.
  
  Now, in order to show $\overrightarrow{d_H}(X,Y+\Delta)\leq\frac{5D}{8}$, we
  consider the following (possibly overlapping) partition of the real line into
  intervals:
  \begin{align*}
  \I_1 &=\left(-\infty,q_1-\frac{5D}{8}+\Delta\right), 
  \I_2 =\left[q_1-\frac{5D}{8}+\Delta,q_1+\frac{5D}{8}+\Delta\right], \\
  \I_3 &=\left(q_1+\frac{5D}{8}+\Delta,y-\frac{5D}{8}+\Delta\right),
  \I_4 =\left[y-\frac{5D}{8}+\Delta,y+\frac{5D}{8}+\Delta\right], \\
  \I_5 &=\left(y+\frac{5D}{8}+\Delta,\infty\right). 
  \end{align*}
  Using the analogy we mentioned, the intervals $\I_1,\I_2,\I_3,\I_4$, and
  $\I_5$ can be reasoned about analogous to $\J_5,\J_4,\J_3,\J_2$, and $\J_1$
  from Case ($1$), respectively. Also, we note that $B=\emptyset$ in Case ($1$)
  is analogous to having $A=\emptyset$ in this case; the argument for $\I_5$
  here uses the fact that $A=\emptyset$. 
  \end{case}
  
  \begin{case}[$A\neq\emptyset,B\neq\emptyset$]
  The case is depicted in \figref{ub-case-3}. The choice of $\Delta$ in this
  case depends on how $\eps$ compares to $\eta$.
  \begin{figure}[thb] \centering \includegraphics[scale=0.8]{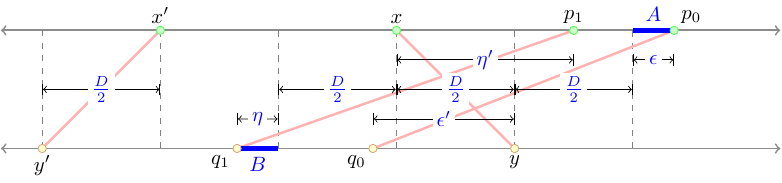}
  \caption[No Double Crossing]{No double crossing Case (3) is shown. The
  sets $A,B$ are subsets of the thick, blue intervals in the top and bottom,
  respectively.}
    \label{fig:ub-case-3}
  \end{figure}
  If $\eps\geq\eta$, we choose $\Delta$ according to Case ($1$). In Case ($1$),
  the argument for the interval $\J_1$ relies on the fact that~$B=\emptyset$.
  Since $\eps\geq\eta$, we have
  $$\Delta\geq\eps-\frac{D}{8}\geq\eta-\frac{D}{8}.$$ Consequently, for any
  $b\in\J_1\cap Y$, we have $b<q_1$. By the definition of the set~$B$ and the
  point $q_1$, any edge $(a,b)$ then cannot cross $(x,y)$, which is analogous to
  having $B=\emptyset$ for Case ($1$).

  Similarly, if $\eta\geq\eps$, we choose $\Delta$ according to Case ($2$). In
  Case ($2$), the argument for the interval $\I_5$ relies on the fact that
  $A=\emptyset$. Since $\eta\geq\eps$, we have 
  $$\Delta\geq\eta-\frac{D}{8}\geq\eps-\frac{D}{8}.$$ Consequently, for any
  $a\in\I_5\cap Y$, we have $a>p_0$. By the definition of the set~$A$ and the
  point $p_0$, any edge $(a,b)$ then cannot cross $(x,y)$, which is analogous to
  having $A=\emptyset$ for Case ($2$).
  \end{case}
\end{proof}

\subsection{Double Crossings}
Now, we undertake the task of finding a suitable isometry/alignment when there
is a double crossing in $\C$. In this case, we may have to consider flipping~$Y$
to construct such an isometry. We always flip $Y$ about the midpoint of $x$
and~$x'$. After flipping, the image of $Y$ is denoted by $\widetilde{Y}$ and the
image of any $b\in Y$ by~$\widetilde{b}$. We first present two technical lemmas.
\begin{lemma}\label{lem:cross}
  Let $(p,q)\in\C$ be a double crossing; see \figref{cross}. If we denote
  $h=(x-x')$, $\eps_1=(p-x)$, and $\eps_2=(y'-q)$, then we have the following:
  \begin{enumerate}[i)]
  \item $\eps_1-\eps_2\geq h$,
  \item $\eps_1-\eps_2\leq D-h$,
  \item $h\leq\frac{D}{2}$,
   and
  \item $\mod{p-\widetilde q}\leq\frac{D}{2}-h$, where $\widetilde{q}$ denotes
  the reflection of $q$ about the midpoint of $x$ and $x'$.
\end{enumerate}
\end{lemma}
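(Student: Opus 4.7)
The plan is to turn each item into a direct computation from the distortion constraints placed on the double crossing edge $(p,q)$ by the two designated edges $(x',y')$ and $(x,y)$. First I would fix the geometry: by the standard configuration one has $y' = x' - D/2$ and $y = x + D/2$, so $y' < x' < x < y$ with $y-y' = D + h$. A double crossing edge then falls (up to the obvious left-right symmetry of the picture) into the case $p > x$ and $q < y'$, so that $\eps_1 = p-x > 0$ and $\eps_2 = y'-q > 0$ are exactly the quantities named in the statement. I would dispose of the symmetric case (namely $p < x'$, $q > y$) with a single sentence.

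Next I would extract (i) and (ii) from the two distortion inequalities. Using $Dist(\C) \le D$ on the pair $(p,q),(x,y)$ gives
\[
\bigl|\,\eps_1 - (D + h + \eps_2)\,\bigr|\;=\;\bigl|\,(p-x) - (y-q)\,\bigr|\;\le\;D,
\]
which unfolds to $h \le \eps_1 - \eps_2 \le 2D+h$; the lower bound is (i). Using the same inequality on the pair $(p,q),(x',y')$, and noting $p-x' = \eps_1 + h$ and $y'-q = \eps_2$, yields
\[
|\,(\eps_1+h) - \eps_2\,| \;\le\; D,
\]
so $\eps_1 - \eps_2 \le D - h$, which is (ii). Combining (i) and (ii) gives $h \le D - h$, i.e.\ (iii).

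For (iv) I would just compute $\widetilde q$ algebraically: since $\widetilde q = (x+x') - q$,
\[
p - \widetilde q \;=\; (p-x) - (x'-q) \;=\; \eps_1 - \left(\tfrac{D}{2}+\eps_2\right) \;=\; (\eps_1-\eps_2) - \tfrac{D}{2},
\]
using $x' - q = (x'-y') + (y'-q) = D/2 + \eps_2$. By (i) and (ii) the bracket $\eps_1-\eps_2$ lies in $[h,\,D-h]$, so $p-\widetilde q$ lies in $[h - D/2,\; D/2 - h]$, which gives $|p-\widetilde q| \le D/2 - h$.

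The arithmetic is entirely routine; the only place that needs genuine care is the bookkeeping at the start, making sure that the signs of $\eps_1,\eps_2$ and the distances $p-x'$, $y-q$ are read off correctly from the double-crossing picture and from the standard-configuration positions of $y',y$. Once those are pinned down, (i)--(iv) drop out in order as above.
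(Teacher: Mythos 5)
Your proposal is correct and follows essentially the same route as the paper: both extract (i) and (ii) from the distortion of $(p,q)$ against $(x,y)$ and $(x',y')$ in the standard configuration, deduce (iii) by combining them, and compute $p-\widetilde q=(\eps_1-\eps_2)-\tfrac{D}{2}$ for (iv). The only (cosmetic) differences are that the paper proves (i) by contradiction rather than by unfolding the two-sided inequality, and your explicit remark about the symmetric case $p<x'$, $q>y$ is a small point of care the paper leaves to its figure.
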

\begin{proof}
  \begin{figure}[thb]
    \centering
    \includegraphics[scale=1]{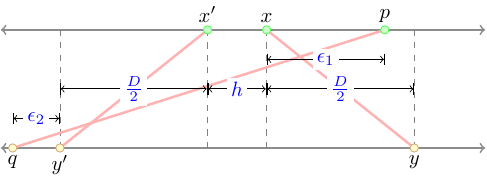}
    \caption[Double Crossing]{A double crossing $(p,q)$ is shown.}
    \label{fig:cross}
  \end{figure}
  \begin{enumerate}[i)]
  \item Let us assume the contrary, i.e., $\eps_1<\eps_2+h$. Then, the
    distortion for the pairs $(x,y)$ and $(p,q)$ becomes
    $$\mod{\eps_2+D+h-\eps_1}=\eps_2+h+D-\eps_1> D.$$ This contradicts the fact
    that the distortion of $\C$ is $D$. Therefore, we conclude that
    $\eps_1-\eps_2\geq h$.

  \item Since from (i) we have $\eps_1\geq\eps_2$, from the distortion for the
    pairs $(p,q)$ and $(x',y')$, we have
    \begin{equation}\label{eqn:gh-1}
      h+\eps_1-\eps_2\leq D.
    \end{equation}
    So, $\eps_1-\eps_2\leq D-h$.

  \item From (ii) we have $\eps_2+D\geq\eps_1$. Hence, the distortion for the
    pairs $(p,q)$ and $(x,y)$ implies 
    \begin{equation}\label{eqn:gh-2}
      \eps_2+D+h-\eps_1\leq D.
    \end{equation}
    Adding \eqnref{gh-1} and \eqnref{gh-2}, we get $2h\leq D$. Hence,
    $h\leq\frac{D}{2}$.
      
    \item If $p>\widetilde q$, then
    $$p-\widetilde
    q=\eps_1-\frac{D}{2}-\eps_2\leq(D-h)-\frac{D}{2}-\eps_2\leq\frac{D}{2}-h.$$
    Otherwise,  
    $$\widetilde
    q-p=\frac{D}{2}+\eps_2-\eps_1\leq\frac{D}{2}-(\eps_1-\eps_2)\leq\frac{D}{2}
    -h.$$
    Therefore, $\mod{p-\widetilde q}\leq\frac{D}{2}-h$.
  \end{enumerate}
\end{proof}

In our pursuit of constructing the right isometry, we first define a wide
(double) crossing. We show in \thmref{wide-cross}, that we need to flip $Y$ in
the presence of such a wide crossing.

\begin{definition}[Wide Crossing]\label{def:wide-cross} A double crossing edge
$(p,q)\in\C$ is called a \bemph{wide crossing} if either 
\begin{equation}\label{eqn:wide-p}
  p<\begin{cases}
  \min A',&\text{ if }A'\neq\emptyset\\
  x'-\frac{D}{2},&\text{ if }A\neq\emptyset\text{ and }\max A>y+\frac{3D}{4}\\
  x'-D,&\text{ otherwise}
  \end{cases}
\end{equation}
or
\begin{equation*}
  p>\begin{cases}
  \max A,&\text{ if }A\neq\emptyset\\
  x+\frac{D}{2},&\text{ if }A'\neq\emptyset\text{ and }\min A<y'-\frac{3D}{4}\\
  x+D,&\text{ otherwise}.
  \end{cases}
\end{equation*}
\end{definition}
  
Before presenting \thmref{wide-cross}, we make an important observation
first in the following technical lemma.
\begin{lemma}[Wide Crossing]\label{lem:wide-cross} Let there be a wide crossing
  $(p,q)\in\C$ and an edge $(p_0,q_0)\in\C$ such that $p_0\in A$ and $y'\leq
  q_0\leq y$. If we denote $\eps=p_0-x-D$, $\eps'=y-q_0$ and $h=x-x'$, then we
  have 
  \begin{equation}\label{eqn:eps'}
    \eps'\geq 2h+\eps.
  \end{equation}
  \end{lemma}
  \begin{proof}
  Since $A\neq\emptyset$, we have from \lemref{AB} that $A'=\emptyset$. So, for
  the wide edge $(p,q)$, we must have either $p<x'$ or $p>\max A$. We,
  therefore, consider the following two possible positions of $p$.
  \begin{figure}[thb]
      \centering
      \includegraphics[scale=1]{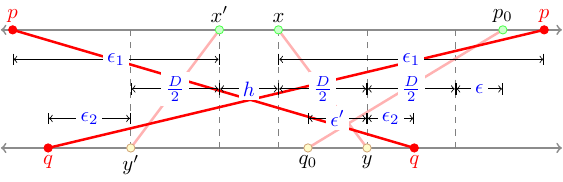}
      \caption[Wide Crossing]
      {A wide crossing $(p,q)$ is shown along with the edge $(p_0,q_0)$. Case
      ($1$) and Case ($2$) are shown in bright red.}
      \label{fig:wide-cross}
  \end{figure}
  \begin{case}[$p<x'$]
    From the distortion of the pair $(p,q)$ and $(p_0,q_0)$, we have
    \begin{align*}
    D&\geq\mod{(p_0-p)-(q-q_0)} \\
    &=\mod{(\eps_1+h+D+\eps)-(\eps'+\eps_2)} \\
    &=(\eps_1+h+D+\eps)-(\eps'+\eps_2),\\
    &\quad\quad\quad\text{ since }\eps_1\geq\eps_2
    \text{ from \lemref{cross} and }\eps'\leq D\text{ from \eqnref{eps}} \\
    &=\eps_1-\eps_2 + h + \eps + D-\eps' \\
    &\geq 2h+\eps + D -\eps',\text{ since }\eps_1\geq\eps_2+h
    \text{ from \lemref{cross}}.
    \end{align*}
  So, $\eps'\geq2h+\eps$.
  \end{case} 
    
  \begin{case}[$p>p_0$]
  From the distortion of the pair $(p,q)$ and $(p_0,q_0)$, we get
  \begin{align*}
  D&\geq\mod{(q_0-q)-(p-p_0)} \\ 
  &=\mod{(\eps_2+D+h-\eps')-(\eps_1-D-\eps)} \\
  &=\mod{(D+\eps_2-\eps_1) + (h + \eps)+(D-\eps')} \\
  &=(D+\eps_2-\eps_1) + (h + \eps)+(D-\eps'),\\
  &\quad\quad\quad\text{ since }D\geq\eps'\text{ from \eqnref{eps}}
  \text{ and }D+\eps_2-\eps_1\geq0\text{ from \lemref{cross}} \\
  &\geq2h + \eps + (D -\eps'),
  \text{ since }D+\eps_2-\eps_1\geq h\text{ from \lemref{cross}}. \\
  \end{align*}
  So, $\eps'\geq2h+\eps$. 
  \end{case}
\end{proof}

\begin{theorem}[Wide Crossing]\label{thm:wide-cross} Let $\C$ be a
  correspondence between two compact sets $X,Y\subseteq\R$ with distortion $D$.
  If there is a wide crossing $(p,q)\in\C$, then there exists a value
  $\Delta\in\R$ such that
  $$d_{H}(X,\widetilde{Y}+\Delta)\leq\frac{5D}{8},$$ where $\widetilde{Y}$
  denotes the reflection of $Y$ about the midpoint of $x$ and $x'$.
\end{theorem}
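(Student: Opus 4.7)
The plan is to reflect $Y$ about $m = (x + x')/2$ to obtain $\widetilde{Y}$, and then produce an appropriate translation $\Delta$ by adapting the three-case analysis of Theorem \ref{thm:no-cross}. First I would observe that $\widetilde{\C} := \{(p,\widetilde{q}) : (p,q) \in \C\}$ is a correspondence of distortion $D$ between $X$ and $\widetilde{Y}$, and that the simple identity $|p - \widetilde{q}| = |(p + q) - (x + x')|$ tells us that the edges of $\widetilde{\C}$ that are far apart are precisely those whose two endpoints stray from the central value $x + x'$. The reflected designated edges themselves have length $D/2 + h$, and by Lemma \ref{lem:cross}(iv) every original double-crossing edge has length at most $D/2 - h$ in the reflected picture; so reflection has neutralised the double crossings.

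The real difficulty after reflection comes from non-crossing edges of $\C$ that live far to the left or far to the right of the designated pair, for which $p + q$ can be far from $x + x'$. I would therefore set up excess sets for the reflected picture in the spirit of $A, A', B, B'$ from the proof of Theorem \ref{thm:no-cross}: roughly, these are the edges whose $X$- or $\widetilde{Y}$-endpoint lies beyond the core interval around $x', x, \widetilde{y}, \widetilde{y'}$, and the excess parameters measure how far they extend.

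The wide-crossing hypothesis enters through Lemma \ref{lem:wide-cross}, which gives $\eps' \geq h$ for the analogue of the $A$-type excess. Combined with the distortion bound relating an excess edge to a designated edge, this forces the corresponding $\eps$ to satisfy a bound strictly sharper than the $\eps \leq D/2$ obtained in Theorem \ref{thm:no-cross}. That extra sharpness is precisely what absorbs the additional $h$ that the designated edges pick up under reflection, so that the final bound still closes at $5D/8$. I would then set $\Delta$ proportional to the largest excess (with the sign dictated by which side dominates), partition $\R$ into intervals around the reflected designated pair and the extremal excess points, and verify on each interval that every point is within $5D/8$ of the other set.

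The main obstacle, and the reason this case needs a separate theorem, is that the reflected designated edges $(x', \widetilde{y'})$ and $(x, \widetilde{y})$ in $\widetilde{\C}$ themselves cross, so Theorem \ref{thm:no-cross} cannot be invoked as a black box; the partition-and-bound bookkeeping must be redone to handle the two designated edges overlapping in the middle. Lemma \ref{lem:wide-cross} is the critical extra input: without a wide crossing the excess parameter $\eps'$ could be smaller than $h$, the strengthened bound on $\eps$ would disappear, and the $5D/8$ budget would be exhausted before the argument closes.
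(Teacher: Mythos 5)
Your overall strategy---reflect $Y$ about the midpoint of $x$ and $x'$, use \lemref{cross}(iv) to see that reflection shortens every double-crossing edge to length at most $\frac{D}{2}-h$, set up excess sets analogous to $A,A',B,B'$, then translate by an amount tied to the largest excess and verify on an interval partition---is the same strategy the paper follows, and you correctly identify the central difficulty (the reflected designated edges $(x',\widetilde{y'})$ and $(x,\widetilde{y})$ have length $\frac{D}{2}+h$ and cross each other). But there are two concrete gaps. First, your account of how the wide-crossing hypothesis closes the bound is off: \lemref{wide-cross} does not sharpen the bound on $\eps$. From $\eps+\eps'\leq D$ and $\eps'\geq h$ one only gets $\eps\leq D-h$, and since $h\leq\frac{D}{2}$ this is weaker, not stronger, than the bound $\eps\leq\frac{D}{2}$ already available from $\eps\leq\eps'$. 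What $\eps'\geq h$ actually buys is positional: since $\widetilde{q_0}-\widetilde{y'}=\eps'-D-h$, one gets $\widetilde{q_0}+\frac{D}{2}-\widetilde{y'}=\eps'-\frac{D}{2}-h\in\left[-\frac{D}{2},\frac{D}{2}-h\right]$, so the two anchors $\widetilde{q_0}+\Delta$ and $\widetilde{y'}+\Delta$ cover the stretch from $x'$ out to $p_0$ with no gap. Without making this positional use explicit, the verification on the middle intervals does not close.

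Second, and more seriously, your plan says nothing about the subcase where the $A$-type excess sets are empty. There \lemref{wide-cross} is vacuous (it presupposes an edge $(p_0,q_0)$ with $p_0>x+D$ and $q_0\in(y',y)$), so the input you call critical is unavailable, and ``$\Delta$ proportional to the largest excess'' has no referent. In that subcase the bottleneck is exactly the central region: a point $a\in[x',x]$ matched under $\C$ to $b\in[y',y]$ can have $\mod{a-\widetilde{b}}$ as large as $\frac{D}{2}+h$, which reaches $D$ when $h=\frac{D}{2}$, so nearest-designated-endpoint bookkeeping fails outright. The paper resolves this by using the distortion bound to get $(a-x')\leq(b-y')$ when $a\geq b$, hence $\mod{(\widetilde{b}+\Delta)-a}\leq\Delta+\frac{D}{2}+h-2(a-x')$, and by splitting on whether $h>\frac{3D}{8}$, with two different and non-proportional translations ($\Delta=\eta_1-\eta_2'-\frac{D}{8}$ and $\Delta=\frac{D}{8}-\eta_1$, where the $\eta_i$ are the overhangs of $Y$ past the designated edges, themselves bounded by $\frac{D}{2}-h$ via the distortion against the wide crossing). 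None of this is recoverable from your sketch, so the proof is incomplete in this case.
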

\begin{proof}
  We consider the subsets $A,A'$ of $X$ as defined in \eqnref{A}. As already
  argued in \lemref{AB}, the subsets $A$ and $A'$ cannot be both non-empty.
  Without any loss of generality, it then suffices to study only the following
  two unique cases:
  \begin{enumerate}
  \item $A\neq\emptyset,A'=\emptyset$,
  \item $A=\emptyset,A'=\emptyset$.
  \end{enumerate}
  For each of the cases, we show that a number $\Delta$ can be chosen such that
  $d_H(X,\widetilde{Y}+\Delta)\leq\frac{5D}{8}$.

\begin{case}[$A\neq\emptyset,A'=\emptyset$] 
  We denote $p_0=\max{A}$ and $\eps=p_0-x-D$. We also let $q_0\in Y\cap[y',y]$
  such that $(p_0,q_0)\in\C$, and $\eps'=(y-q_0)$; see \figref{ub-1}. As already
  shown in \eqnref{eps}, we still have 
  \begin{equation}\label{eqn:eps-wide}
    \eps\leq\eps'\leq D-\eps,\text{ and }\eps\leq\frac{D}{2}.
  \end{equation} 
  \begin{figure}[thb]
  \centering
  \includegraphics[scale=0.75]{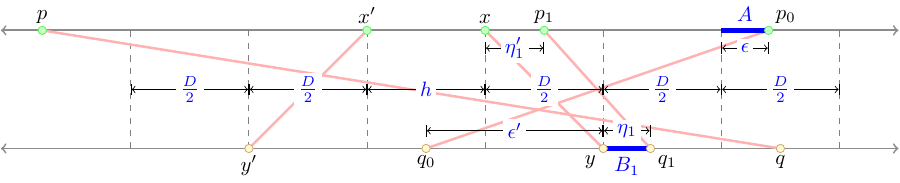}
  \includegraphics[scale=0.75]{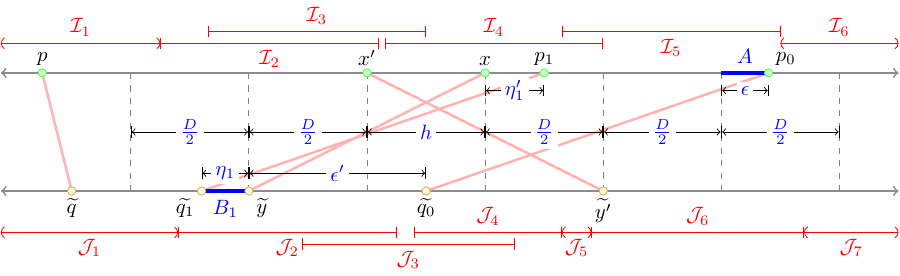}
  \caption[Double Crossing Case $1$]
  {The correspondence with a wide crossing is shown in the top. The sets $A$ and
    $B_1$ are subsets of the thick, dark blue regions on top and bottom
    respectively. In the bottom, we show the configuration when $Y$ is flipped
    about the midpoint of $x$ and $x'$.}
  \label{fig:ub-1}
  \end{figure}
  We now define the following subset of $Y$:
  \begin{equation}\label{eqn:B1}
    B_1=\{b\in Y\cap[y,\infty)\mid\exists~a\in
    X\cap[x,\infty)\mbox{ with }(a,b)\in\C\}.  
  \end{equation}
  Let $q_1=\max B_1$,
  $\eta_1=q_1-y$, and let there exist an edge $(p_1,q_1)\in\C$ with
  $\eta_1'=p_1-x$. We note from \lemref{eta-1} that
  \begin{equation}\label{eqn:eta-1}
    \eta_1\leq\eta_1'\leq D\text{ and }\eta_1\leq\frac{D}{2}-h.
  \end{equation}
  
  If we already have $d_H(X,\widetilde{Y})\leq\frac{5D}{8}$, we take $\Delta=0$.
  Let us, therefore, assume the non-trivial case that
  $d_H(X,\widetilde{Y})>\frac{5D}{8}$. As a result, we must have
  $$\max\{\eps,\eta_1\}>\frac{D}{8}.$$ This is because all points of $X$ outside
  $A$ and points of $\widetilde{Y}$ outside $B_1$ are already covered, as we
  note
  \begin{enumerate}[i)]
    \item $\mod{a-\widetilde{b}}\leq\frac{D}{2}$ for a double crossing edge $(a,b)$ from
    \lemref{cross},
    \item $\mod{a-\widetilde{q_0}}\leq\frac{D}{2}$ for any $a\in[x',x]$ due to
    the fact that $\eps'\geq2h+\eps$ from \lemref{wide-cross}, and
    \item $\mod{b-x'}\leq\frac{D}{2}$ or $\mod{b-x}\leq\frac{D}{2}$ for any
    $b\in[y',y]$ due to the fact that $h\leq\frac{D}{2}$ from \lemref{cross}.
  \end{enumerate}
  
  We now show that $d_H(X,\widetilde{Y}+\Delta)\leq\frac{5D}{8}$ for some
  translation amount $\Delta$ such that
  \begin{equation}\label{eqn:delta-1-wide}
    \begin{cases}
    \Delta=&\max\{\eps,\eta_1\}-\frac{D}{8},\text{ when }\eps\leq
    \eta_1+\frac{D}{4}  \\
    \Delta\in&\left[\eps-\frac{D}{8},\frac{5D}{8}+h+\eps-\eps'\right],
    \text{ when }
    \eta_1+\frac{D}{4}<\eps\leq\frac{D}{2}.
    \end{cases}
  \end{equation}
  While $\overrightarrow{d}_H(X,\widetilde{Y}+\Delta)\leq\frac{5D}{8}$ for any
  $\Delta$ satisfying \eqnref{delta-1-wide},  we may require a more specific
  choice of $\Delta$ in order to show
  $\overrightarrow{d}_H(\widetilde{Y}+\Delta,X)\leq\frac{5D}{8}$. 
  
  As already argued $\max\{\eps,\eta_1\}>\frac{D}{8}$, we have $\Delta>0$. We
  also note from \eqnref{eps-wide}, \eqnref{eta-1} that
  $\eps,\eta_1\leq\frac{D}{2}$ and from \eqnref{eps'} that the upper bound of
  $\Delta$:
  $$
  \frac{5D}{8}+h+\eps-\eps'\leq\frac{5D}{8}+h+\eps-(2h+\eps)\leq\frac{5D}{8}-h
  \leq\frac{5D}{8}.
  $$ 
  As a result, we note for later that $$0<\Delta\leq\frac{5D}{8}.$$ 

  In order to show
  $\overrightarrow{d}_H(X,\widetilde{Y}+\Delta)\leq\frac{5D}{8}$ for any
  $\Delta$ satisfying \eqnref{delta-1-wide}, we consider the following (possibly
  overlapping) intervals to cover the real line:
  \begin{align*}
  \I_1&=\left(-\infty,\widetilde{q_1}-\frac{5D}{8}+\Delta\right),
  \I_2=\left[\widetilde{q_1}-\frac{5D}{8}+\Delta,
  \widetilde{q_1}+\frac{5D}{8}+\Delta\right], \\
  \I_3&=\left[\widetilde{y}-\frac{5D}{8}+\Delta,
    \widetilde{y}+\frac{5D}{8}+\Delta\right],
  \I_4=\left[\widetilde{q_0}-\frac{5D}{8}+\Delta,\widetilde{q_0}+\frac{5D}{8}+
  \Delta\right], \\
  \I_5&=\left[\widetilde{y'}-\frac{5D}{8}+\Delta,\widetilde{y'}+\frac{5D}{8}+
  \Delta\right],\mbox{ and } 
  \I_6=\left(\widetilde{y'}+\frac{5D}{8}+\Delta,\infty\right).
  \end{align*}
  Since $\widetilde{y}-\widetilde{q_1}=\eta_1\leq D$ from \eqnref{eta-1}, the
  intervals $\I_2$ and $\I_3$ intersect. Since
  $\widetilde{q_0}-\widetilde{y}=\eps'\leq D$ from \eqnref{eps-wide}, the
  intervals $\I_3$ and $\I_4$ intersect. Since $\eps'\geq h$ from
  \lemref{wide-cross}, we have $\widetilde{y'}-\widetilde{q_0}=(D+h)-\eps'\leq
  D$. So, the intervals $\I_4$ and $\I_5$ intersect. As a result, the union of
  the intervals is the entire real line. For an arbitrary point $a\in X$ from
  any of the above intervals, we show that there exists a point $b\in Y$ such
  that $\mod{a-(\widetilde{b}+\Delta)}\leq\frac{5D}{8}$. The intervals $\I_2$,
  $\I_3$, $\I_4$, and $\I_5$ are covered by $q_1$, $y$, $q_0$, and $y'$,
  respectively. So, we present our argument only for $\I_1$ and $\I_6$.

  \paragraph{($\pmb{\I_1}$)} Let $a\in\I_1\cap X$ with an edge $(a,b)\in\C$. As
  we note now, $a$ satisfies the condition \eqnref{wide-1} of
  \lemref{wide-no-cross}. When $\eps\leq\eta_1+\frac{D}{4}$, we have 
  \begin{align*}
    x'-a&>\frac{D}{2}+\eta_1+\left(\frac{5D}{8}-\Delta\right)
    =D+\eta_1-\Delta+\frac{D}{8} \\
    &=D+\eta_1-\left(\max\{\eps,\eta_1\}-\frac{D}{8}\right)
    +\frac{D}{8},\text{ from \eqnref{delta-1-wide}} \\
    &=D+\left(\eta_1-\max\{\eps,\eta_1\}\right)+\frac{D}{4}
    =\begin{cases}
      D+\frac{D}{4},\text{ if }\eta_1\geq\eps \\
      D+\frac{D}{4}+\eta_1-\eps\text{ if }\eta_1\leq\eps
    \end{cases}\\
    &\geq D,\text{ since }\eps\leq\eta_1+\frac{D}{4}.
  \end{align*}
  When $\eps>\eta_1+\frac{D}{4}$, we have 
  $$x'-a>\frac{D}{2}+\eta_1+\left(\frac{5D}{8}-\Delta\right)\geq\frac{D}{2},
  \text{ as }\Delta\leq\frac{5D}{8}.$$  Therefore, \lemref{wide-no-cross}
  implies that $(a,b)$ is a double crossing. As a result, after flipping $Y$,
  the edge $(a,\widetilde{b})$ does not cross $(p_0,\widetilde{q_0})$.
  Consequently, when $\widetilde{b}\geq a$, due to the distortion bound of $D$
  of the edges $(a,\widetilde{b})$ and $(p_0,\widetilde{q_0})$, we must have 
  $$
    D\geq(p_0-a)-(\widetilde{q_0}-\widetilde{b})
    =(\widetilde{b}-a)+(p_0-\widetilde{q_0}).
  $$
  So, 
  \begin{align*}
    \Delta+(\widetilde{b}-a)&\leq\Delta + [D-(p_0-\widetilde{q_0})] \\
    &=\Delta+D-\left[\left(\eps+\frac{D}{2}\right)+(D+h-\eps')\right]\\
    &=\Delta+\eps'-\eps-h-\frac{D}{2} \\
    &\leq\left(\frac{5D}{8}+h+\eps-\eps'\right)+\eps'-\eps-h-\frac{D}{2},
    \text{ from \eqnref{delta-1-wide}}\\
    &\leq\frac{5D}{8}.
  \end{align*}
  Also, when $\widetilde{b}\leq a$, we must have
  $\mod{a-(\widetilde{b}+\Delta)}\leq\frac{5D}{8}$, since
  $\Delta\leq\frac{5D}{8}$. Therefore, in either case, we have
  $\mod{a-(\widetilde{b}+\Delta)}\leq\frac{5D}{8}$.
  
  \paragraph{($\pmb{\I_6}$)} Let $a\in\I_6\cap X$ and $(a,b)\in\C$ an edge. We
  note that $a>p_0$, as we have $\Delta\geq\eps-\frac{D}{8}$ from
  \eqnref{delta-1-wide}. \lemref{wide-no-cross} implies that $(a,b)$ is a double
  crossing edge. Therefore, after flipping $Y$, the edge $(a,\widetilde{b})$
  cannot cross $(p_0,\widetilde{q_0})$.  The rest of the argument presented for
  $\I_1$ then goes through. Hence arguing similar to $\I_1$, we have
  $\mod{a-(\widetilde{b}+\Delta)}\leq\frac{5D}{8}$.

  In order to show that $\overrightarrow{d}_H(\widetilde{Y}+\Delta,X)
  \leq\frac{5D}{8}$, we define the following intervals:
  \begin{align*}
  \J_1 &= \left(-\infty,x'-\frac{5D}{8}-\Delta\right),
  \J_2 = \left[x'-\frac{5D}{8}-\Delta,x'+\frac{5D}{8}-\Delta\right],\\
  \J_3 &= \left[x-\frac{5D}{8}-\Delta,x+\frac{5D}{8}-\Delta\right],
  \J_4 = \left[p_1-\frac{5D}{8}-\Delta,p_1+\frac{5D}{8}-\Delta\right],\\
  \J_5 &= \left(p_1+\frac{5D}{8}-\Delta,p_0-\frac{5D}{8}-\Delta\right),
  \J_6 = \left[p_0-\frac{5D}{8}-\Delta,p_0+\frac{5D}{8}-\Delta\right],\\
  \text{ and } & \J_7 = \left(p_0+\frac{5D}{8}-\Delta,\infty\right).
  \end{align*}
  Since $x-x'=h\leq\frac{D}{2}$ from \lemref{cross}, the intervals $\J_2$ and
  $\J_3$ intersect. Since $p_1-x=\eta_1'\leq D$ from \eqnref{eta-1}, the
  intervals $\J_3$ and $\J_4$ intersect. As a result, the union of the intervals
  is the entire real line. For an arbitrary point
  $\widetilde{b}\in\widetilde{Y}$ from any of the above intervals, we show that
  there exists a point $a\in X$ such that
  $\mod{a-(\widetilde{b}+\Delta)}\leq\frac{5D}{8}$. The intervals $\J_2$,
  $\J_3$, $\J_4$, and $\J_6$ are covered by $x'$, $x$, $p_1$, and $p_0$,
  respectively. So, we present the argument only for $\J_1,\J_5$, and $\J_7$.
  While the argument for $\J_1,\J_7$ goes through for any $\Delta$ satisfying
  \eqnref{delta-1-wide}, the interval $\J_5$ may require a more particular
  choice of $\Delta$.
  
  \paragraph{($\pmb{\J_1}$)} Let $\widetilde{b}\in\widetilde{Y}\cap\J_1$ and
  $(a,b)\in\C$ an edge. From \eqnref{delta-1-wide}, we get
  $\Delta\geq\eta_1-\frac{D}{8}$, so $\widetilde{b}<\widetilde{q_1}$, i.e,
  $b>q_1$. Since $q_1=\max B_1$, the edge $(a,b)$ has to cross $(p_0,q_0)$. So
  after flipping $Y$, the edge $(a,\widetilde{b})$ does not cross
  $(p_0,\widetilde{q_0})$.  The rest of the argument presented for the interval
  $\I_1$ then goes through. Hence arguing similar to $\I_1$, we have
  $\mod{a-(\widetilde{b}+\Delta)}\leq\frac{5D}{8}$. 
  
  \paragraph{($\pmb{\J_5}$)} We observe that $\J_5\neq\emptyset$ if and only if
  $\eps>\frac{D}{4}+\eta_1'$. There is nothing to show if $\J_5$ is empty. So,
  we assume that $\eps>\frac{D}{4}+\eta_1'$, and claim that there must exist a
  number $\Delta\in\left[\eps-\frac{D}{8},\frac{5D}{8}+h+\eps-\eps'\right]$ such
  that for any $\widetilde{b}\in\widetilde{Y}\cap\J_5$ there exists $a\in X$
  with $\mod{a-(\widetilde{b}+\Delta)}\leq\frac{5D}{8}$. We also observe that
  the choice of $\Delta$ here satisfies \eqnref{delta-1-wide}, because
  $\eps>\frac{D}{4}+\eta_1'$ implies $\eps>\frac{D}{4}+\eta_1$ due to
  \eqnref{eta-1}.

  We argue by contradiction. Let us assume that no such $\Delta$ exists in the
  given interval, i.e., for all
  $\Delta\in\left[\eps-\frac{D}{8},\frac{5D}{8}+h+\eps-\eps'\right]$, the
  following subset of $\J_5$ is non-empty:
  $$
    E(\Delta)=\left\{\widetilde{b}\in\widetilde{Y}\cap\J_5\mid X\cap\left[\widetilde{b}-\frac{5D}{8}+\Delta,\widetilde{b}+\frac{5D}{8}+\Delta\right]=\emptyset\right\}.
  $$
  Let us also define
  $$
  \widetilde{Y}_\leq=\left\{b\in Y\mid\text{there exists an edge }(a,b)\in\C
  \text{ with }a\leq\widetilde{b}\right\}
  $$
  and
  $$
  \widetilde{Y}_\geq=\left\{b\in Y\mid\text{there exists an edge }(a,b)\in\C
  \text{ with }a\geq\widetilde{b}\right\}.
  $$
  We note from \lemref{LR} that $E(\Delta)\cap\widetilde{Y}_\leq$ and
  $E(\Delta)\cap\widetilde{Y}_\geq$ cannot be both non-empty for any given
  $\Delta$. Therefore, $E(\Delta)\neq\emptyset$ for all $\Delta$ implies
  \begin{enumerate}[i)]
    \item either $E(\Delta)\cap\widetilde{Y}_\leq$ is empty and
    $E(\Delta)\cap\widetilde{Y}_\geq$ non-empty for all $\Delta$, or
    \item $E(\Delta)\cap\widetilde{Y}_\leq$ is non-empty and
    $E(\Delta)\cap\widetilde{Y}_\geq$ empty for all $\Delta$.
  \end{enumerate}
  In order to arrive at a contradiction, we now show, however, that
  $E(\Delta)\cap\widetilde{Y}_\geq$ is empty when $\Delta=\eps-\frac{D}{8}$,
  whereas $E(\Delta)\cap\widetilde{Y}_\leq$ is empty when
  $\Delta=\frac{5D}{8}+h+\eps-\eps'$.
  
  When $\Delta=\eps-\frac{D}{8}$, we show that
  $E(\Delta)\cap\widetilde{Y}_\geq=\emptyset$. Consider any
  $\widetilde{b}\in\J_5\cap\widetilde{Y}_\geq$ with an edge $(a,b)\in\C$ such
  that $a\geq\widetilde{b}$. Due to the distortion bound of $D$ for the pair of
  edges $(x',\widetilde{y'})$ and $(a,\widetilde{b})$, we have $(a-x') \leq D +
  (\widetilde{y'}-\widetilde{b})$. So,
  \begin{align*} 
    a-(\widetilde{b}+\Delta)
    &\leq (x' + D + \widetilde{y'} - \widetilde{b} )- (\widetilde{b}+\Delta) \\
    &\leq x' + D + \widetilde{y'} -2\widetilde{b}-\Delta \\
    &< x' + D + \widetilde{y'}-2\left(p_1+\frac{5D}{8}-\Delta\right)-\Delta,\text{ since }\widetilde{b}\in\J_5 \\
    &\leq x + D + \widetilde{y'}-2\left(x+\frac{5D}{8}-\Delta\right)-\Delta,\text{ since }p_1\geq x\geq x' \\
    &=(\widetilde{y'} - x) -\frac{D}{4}+\Delta \\
    &=\frac{D}{2}-\frac{D}{4}+\Delta \\
    &=\frac{D}{4}+\left(\eps-\frac{D}{8}\right) \\
    &=\eps+\frac{D}{8}\leq\frac{5D}{8},
    \text{ since }\eps\leq\frac{D}{2}\text{ from \eqnref{eps-wide}}.
  \end{align*}
  So, $\widetilde{b}\notin E(\Delta)$. Therefore,
  $E(\Delta)\cap\widetilde{Y}_\geq=\emptyset$ for $\Delta=\eps-\frac{D}{8}$. 
  
  We now show that $E(\Delta)\cap\widetilde{Y}_\leq=\emptyset$ when
  $\Delta=\frac{5D}{8}+h+\eps-\eps'$. To see this, consider
  $\widetilde{b}\in\widetilde{Y}\cap\J_5$ and an edge $(a,\widetilde{b})\in\C$
  such that $a\leq\widetilde{b}$. Due to the distortion bound of $D$ for the
  pair of edges $(p_0,\widetilde{q_0})$ and $(a,\widetilde{b})$, we have
  $(p_0-a) \leq D + (\widetilde{b}-\widetilde{q_0})$. So,
  \begin{align*}
    (\widetilde{b}+\Delta)-a 
    &\leq(\widetilde{b}+\Delta) + D + (\widetilde{b}-\widetilde{q_0}) - p_0\\    
    &\leq\Delta + D + 2\widetilde{b}-\widetilde{q_0}- p_0\\
    &<\Delta+D+2\left(p_0-\frac{5D}{8}-\Delta\right)-\widetilde{q_0}-p_0,\text{ since }\widetilde{b}\in\J_5  \\
    &=(p_0-\widetilde{q_0})-\frac{D}{4}-\Delta \\
    &=\left(D+h-\eps'+\frac{D}{2}+\eps\right)-\frac{D}{4}-\Delta \\
    &=\left(D+h-\eps'+\frac{D}{2}+\eps\right)-\frac{D}{4}-\left(\frac{5D}{8}+h+\eps-\eps'\right) \\
    &=\frac{5D}{8}.
  \end{align*}
  So, $\widetilde{b}\notin E(\Delta)$. Therefore,
  $E(\Delta)\cap\widetilde{Y}_\leq=\emptyset$ for
  $\Delta=\frac{5D}{8}+h+\eps-\eps'$. This is a contradiction. Hence, there must
  exist a number $\Delta$ satisfying \eqnref{delta-1-wide} as claimed.
  
  \paragraph{($\pmb{\J_7}$)} Let $\widetilde{b}\in\widetilde{Y}\cap\J_7$ and
  $(a,b)\in\C$ an edge. Since $\Delta\leq\frac{5D}{8}$, we have
  $\widetilde{b}>p_0\geq y+\frac{D}{2}$, i.e., $b<y'-\frac{D}{2}$. So,
  \lemref{wide-no-cross-1} implies that $(a,b)$ is a double crossing, i.e.,
  $a>x$. We further argue that we must also have $a>p_0$.

  If not, i.e., $x<a\leq p_0$, then $(p_0,\widetilde{q_0})$ crosses
  $(a,\widetilde{b})$. Consequently, the distortion of the pair 
  \begin{align*}
  &(\widetilde{b}-\widetilde{q_0})-(p_0-a)\\
  &\geq(\widetilde{b}-\widetilde{q_0})-(\widetilde{b}-a),
  \text{ as }\widetilde{b}\in\J_7,\text{ we have }p_0\leq\widetilde{b} \\
  &=(\widetilde{b}-p_0)+(p_0-\widetilde{y'})
  +(\widetilde{y'}-\widetilde{q_0})-(\widetilde{b}-a)\\
  &\geq(\widetilde{b}-p_0)+(p_0-\widetilde{y'})
  +(\widetilde{y'}-\widetilde{q_0})-\frac{D}{2},\text{ from \lemref{cross}}\\
  &=(\widetilde{b}-p_0)+\left(\frac{D}{2}+\eps\right)+
  \left(D+h-\eps'\right)-\frac{D}{2} \\
  &>\left(\frac{5D}{8}-\Delta\right)+\left(\frac{D}{2}+\eps\right)+
  \left(D+h-\eps'\right)-\frac{D}{2},\text{ as }\widetilde{b}\in\J_7 \\
  &=\frac{13D}{8}+\eps-\eps'+h-\Delta\\
  &\geq\frac{13D}{8}+\eps-\eps'+h-\left(\frac{5D}{8}+\eps-\eps'+h\right),
  \text{ from \eqnref{delta-1-wide}}\\
  &=D
  \end{align*}
  This is a contradiction, so $a>p_0$. Therefore, the edge $(a,\widetilde{b})$
  cannot cross $(p_0,\widetilde{q_0})$. Hence arguing similar to $\I_1$, we have
  $\mod{a-(\widetilde{b}+\Delta)}\leq\frac{5D}{8}$. 
  \end{case}

  \begin{case}[$A=\emptyset,A'=\emptyset$]
  We define 
  \begin{equation}\label{eqn:B2}
    B_2=\{b\in Y\cap(-\infty,y']\mid\exists~a\in
    X\cap(-\infty,x']\mbox{ with }(a,b)\in\C\}.
  \end{equation} 
  Let $q_2=\min B_2$, $\eta_2=y'-q_2$, and let there exist an edge
  $(p_2,q_2)\in\C$ with $\eta_2'=x'-p_2'$; see \figref{ub1}. Following the same
  argument presented for \eqnref{eta-1}, we have
  \begin{equation}\label{eqn:eta-2}
    \eta_2\leq\eta_2'\leq D\text{ and }\eta_2\leq\frac{D}{2}-h.
  \end{equation}
  If we already have $d_H(X,\widetilde{Y})\leq\frac{5D}{8}$, we take $\Delta=0$.
  Let us, therefore, assume the non-trivial case that
  $d_H(X,\widetilde{Y})>\frac{5D}{8}$. We have observed in \lemref{cross} that
  for a double crossing edge $(a,b)$, we already have
  $\mod{a-\widetilde{b}}\leq\frac{D}{2}$, after flipping $Y$. This together with
  the fact that $A,A'=\emptyset$ implies we must have either
  \begin{enumerate}[i)]
    \item $a_0\in\left(x'+\frac{D}{8},x-\frac{D}{8}\right)\cap X$ such that
    $\min_{\widetilde{y}\in\widetilde{Y}}\mod{a_0-\widetilde{y}}>\frac{5D}{8}$,
    or
    \item $b_0\in B_1\cup B_2$ such that $\min_{x\in
    X}\mod{\widetilde{b_0}-x}>\frac{5D}{8}$.
  \end{enumerate} 

  When $h\leq\frac{3D}{8}$, such $a_0$ cannot exist.  Because, we have that
  $\mod{a_0-\widetilde{b}}\leq\frac{5D}{8}$ for any edge $(a_0,b)$, due to the
  fact that $\mod{a-b}\leq\frac{D}{2}$. Consequently, $b_0$ must exist implying
  that either $\eta_1>\frac{D}{8}$ or $\eta_2>\frac{D}{8}$. Without any loss of
  generality, we assume $\eta_1\geq\eta_2>\frac{D}{8}$ so that the direction of
  the translation of $\widetilde{Y}$ is from left to right. 

  When $h>\frac{3D}{8}$, on the other hand, from \eqnref{eta-1},\eqnref{eta-2}
  we have $\eta_1,\eta_2\leq\frac{D}{8}$. Consequently, $b_0$ cannot exist, so
  $a_0$ must exist. We also note from the existence of such $a_0$ that the set
  \begin{equation}\label{eqn:H}
    H:=\left(y'+h-\frac{D}{4},y-h+\frac{D}{4}\right)\cap Y=\emptyset.
  \end{equation}
  The set $H$ is indicated in \figref{ub1}. This is because for any $b\in H$ and
  $a\in\left(x'+\frac{D}{8},x-\frac{D}{8}\right)$, we have
  $$\mod{a-\widetilde{b}}
  \leq\mod{\left(x-\frac{D}{8}\right)-\left(y'+h-\frac{D}{4}\right)}
  =\frac{5D}{8}.$$ Furthermore, $a_0$ belongs to either
  $C_L:=\left(x'+\frac{D}{8},x'+h-\frac{D}{4}\right)\cap X$ or (its mirror
  image) $C_R:=\left(x-h+\frac{D}{4},x-\frac{D}{8}\right)\cap~X$. This is
  because for any $a\in\left[x'+h-\frac{D}{4},x-h+\frac{D}{4}\right]$ with edge
  $(a,b)$, we would have $b\in H$, due to the fact that
  $\mod{a-b}\leq\frac{D}{2}$. The sets $C_L$ and $C_R$ are also depicted in
  \figref{ub1}. Without any loss of generality, we then assume that 
  \begin{equation}\label{eqn:a_0}
    a_0\in\left(x'+\frac{D}{8},x'+h-\frac{D}{4}\right)\cap X.
  \end{equation}
  The choice is justified, because one can alternatively consider $-X,-Y$
  without altering the distortion of the correspondence. The assumption again
  retains the left-to-right direction of the translation of $\widetilde{Y}$.

  Now, we take our
  \begin{equation}\label{eqn:delta-2-wide}
    \Delta=\begin{cases}
      &\eta_1-\eta_2-\frac{D}{8},
      \text{ when }h\leq\frac{3D}{8} \\
      &h-\frac{3D}{8},\text{ when }\frac{3D}{8}<h\leq\frac{D}{2}.
    \end{cases}
  \end{equation}
  We immediately note that $0<\Delta\leq\frac{D}{2}$, as
  $\eta_1\leq\frac{D}{2}$.
  \begin{figure}[thb]
  \centering
  \includegraphics[scale=0.8]{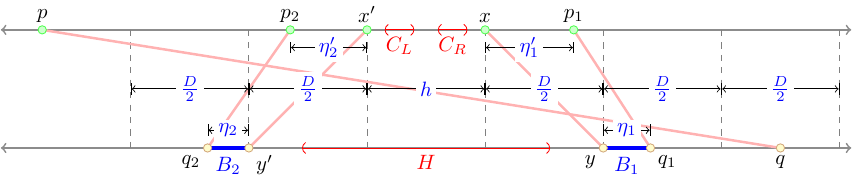}
  \includegraphics[scale=0.8]{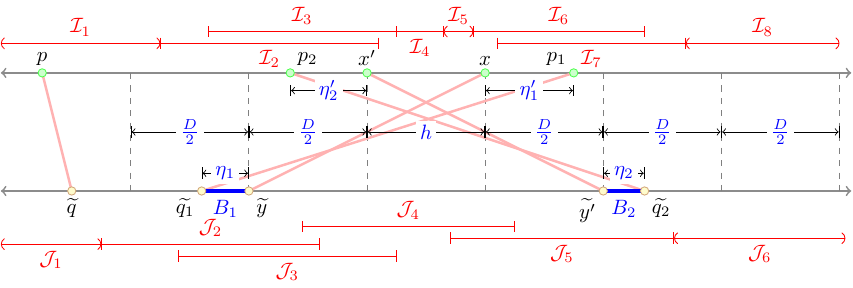}
  \caption[Wide Crossing Case $2$]{Wide crossing exists, and both 
  $A=\emptyset$, $A'=\emptyset$}
  \label{fig:ub1}
  \end{figure}

  In order to show that $\overrightarrow{d}_H(X,\widetilde{Y}+\Delta)
  \leq\frac{5D}{8}$, we define the following intervals:
  \begin{align*}
  \I_1 &= \left(-\infty,\widetilde{q_1}-\frac{5D}{8}+\Delta\right),
  \I_2 = \left[\widetilde{q_1}-\frac{5D}{8}+\Delta,\widetilde{q_1}+\frac{5D}{8}+\Delta\right], \\
  \I_3 &= \left[\widetilde{y}-\frac{5D}{8}+\Delta,\widetilde{y}+\frac{5D}{8}+\Delta\right],
  \I_4 = \left[x'+\frac{D}{8}+\Delta,x-\frac{h}{2}+\frac{D}{16}+\frac{\Delta}{2}\right], \\
  \I_5 &= \left(x-\frac{h}{2}+\frac{D}{16}+\frac{\Delta}{2},x-\frac{D}{8}+\Delta\right),
  \I_6 = \left[\widetilde{y'}-\frac{5D}{8}+\Delta,\widetilde{y'}+\frac{5D}{8}+\Delta\right], \\
  \I_7 &= \left[\widetilde{q_2}-\frac{5D}{8}+\Delta,\widetilde{q_2}+\frac{5D}{8}+\Delta\right],
  \mbox{ and }
  \I_8= \left(\widetilde{q_2}+\frac{5D}{8}+\Delta,\infty\right).
  \end{align*}
  Since $\widetilde{y}-\widetilde{q_1}=\eta_1\leq D$ from \eqnref{eta-1}, the
  intervals $\I_2$ and $\I_3$ intersect. Since
  $\widetilde{q_2}-\widetilde{y'}=\eta_2\leq D$ from \eqnref{eta-2}, the
  intervals $\I_6$ and $\I_7$ intersect. As a result, the union of the intervals
  is the entire real line. For an arbitrary point $a\in X$ from any of the above
  intervals, we show that there exists a point $b\in Y$ such that
  $\mod{a-(\widetilde{b}+\Delta)}\leq\frac{5D}{8}$. The intervals $\I_2$,
  $\I_3$, $\I_6$, and $\I_7$ are covered by $q_1$, $y$, $y'$, and $q_2$,
  respectively. So, we present the argument only for $\I_1,\I_4$, $\I_5$, and
  $\I_8$.

  \paragraph{$\pmb{(\I_1)}$} Let $a\in\I_1\cap X$ and $(a,b)\in\C$ an edge. We
  argue that $b>q_1$. We first observe that 
  \begin{align*}
  x'-a&=(x'-\widetilde{q_1})+(\widetilde{q_1}-a)
  =\left(\frac{D}{2}+\eta_1\right)+(\widetilde{q_1}-a) \\
  &>\left(\frac{D}{2}+\eta_1\right)
  +\left(\frac{5D}{8}-\Delta\right)
  =D+\left(\frac{D}{8}+\eta_1-\Delta\right) \\
  &\geq D,\text{ as last term is non-negative for any }\Delta
  \text{ satisfying \eqnref{delta-2-wide}}.
  \end{align*}
  So, \lemref{wide-no-cross} implies that $b>y$. If we assume $b\in[y,q_1]$,
  then the distortion of the edges $(a,b)$ and $(p_1,q_1)$ exceeds $D$: 
  $$(p_1-a)-(q_1-b)>(D+h+\eta_1')-\eta_1\geq D\text{ as }\eta_1'\geq\eta_1
  ,\text{ from \eqnref{eta-1}}.$$ Therefore, $b>q_1$ as claimed. As a result,
  after flipping $Y$ the edge $(a,\widetilde{b})$ does not cross
  $(p_1,\widetilde{q_1})$. Consequently, when $\widetilde{b}\geq a$, due to the
  distortion bound of $(a,b)$ and $(p_1,\widetilde{q_1})$, we must have 
  $$(\widetilde{b}-a)+(p_1-\widetilde{q_1})\leq D.$$
  So,
  \begin{align*}
  \Delta+(\widetilde{b}-a)
  &\leq\Delta + D-(p_1-\widetilde{q_1}) \\
  &=\Delta + D-\left(\eta_1+\frac{D}{2}+h+\eta_1'\right) \\
  &\leq\Delta + \frac{D}{2}-\eta_1-h.
  \end{align*}
  When $h\leq\frac{3D}{8}$, from \eqnref{delta-2-wide} we have 
  $$\mod{a-(\widetilde{b}+\Delta)}\leq\Delta+\frac{D}{2}-\eta_1-h
  =\left(\eta_1-\eta_2-\frac{D}{8}\right)+\frac{D}{2}-\eta_1-h\leq\frac{5D}{8}.$$
  When $h>\frac{3D}{8}$, again from \eqnref{delta-2-wide} we have 
  $$\mod{a-(\widetilde{b}+\Delta)}\leq\Delta+\frac{D}{2}-\eta_1-h
  =\left(h-\frac{3D}{8}\right)+\frac{D}{2}-\eta_1-h\leq\frac{5D}{8}.$$
  When $a\geq\widetilde{b}$, from \lemref{cross}, we have
  $a-\widetilde{b}\leq\frac{D}{2}$. Since $\Delta\leq\frac{D}{2}$ as already
  noted, we must have
  $$\mod{a-(\widetilde{b}+\Delta)}\leq\frac{5D}{8}.$$
  Hence, we have $\mod{a-(\widetilde{b}+\Delta)}\leq\frac{5D}{8}$.
  
  \paragraph{$\pmb{(\I_4)}$} We note that $\I_4\neq\emptyset$ if and only if
  $h\geq\frac{D}{4}$. There is nothing to show if $\I_4$ is empty So, we assume
  that $h\geq\frac{D}{4}$. Also, the choice of $\Delta$ in \eqnref{delta-2-wide}
  satisfies \eqnref{Delta}. Therefore, \lemref{flip-1} implies that
  $\mod{a-(\widetilde{b}+\Delta)}\leq\frac{5D}{8}$.

  \paragraph{$\pmb{(\I_5)}$} We note that $\I_5\neq\emptyset$ if and only if
  $h>\frac{3D}{8}$. There is nothing to show if $\I_5$ is empty. So, we assume
  that $h>\frac{3D}{8}$ and $a\in\I_5\cap X$. From \eqnref{delta-2-wide}, we
  have $\Delta=h-\frac{3D}{8}$. As already discussed there is $a_0$ satisfying
  \eqnref{a_0}. Let $(a_0,b)$ be an edge. We argue that
  $\mod{a-(\widetilde{b}+\Delta)}\leq\frac{5D}{8}$. 
  
  We note from \lemref{standard} that $\mod{a_0-b}\leq\frac{D}{2}$.
  Consequently,
  $$b\in\left(\left[x'+\frac{D}{8}\right]-\frac{D}{2},
  \left[x'+h-\frac{D}{4}\right]+\frac{D}{2}\right)
  =\left(y'+\frac{D}{8},x'+h+\frac{D}{4}\right).$$ Hence,   
  $\widetilde{b}\in\left(x-h-\frac{D}{4},y-\frac{D}{8}\right)$. Considering the
  opposite endpoints of this interval and $\I_5$, we can write 
  \begin{align*}
    &\mod{(\widetilde{b}+\Delta)-a} \\
    &\leq\max\left\{\mod{\left(x-h-\frac{D}{4}\right)+\Delta-\left(x-\frac{D}{8}+\Delta\right)},
    \mod{\left(y-\frac{D}{8}\right)+\Delta-\left(x-\frac{h}{2}+\frac{D}{16}+\frac{\Delta}{2}\right)}\right\},\\
    &=\max\left\{\mod{\Delta-h},\mod{\frac{\Delta}{2}+\frac{h}{2}+\frac{5D}{16}}\right\}\\
    &=\max\left\{\mod{\left(h-\frac{3D}{8}\right)-h},
    \mod{\frac{1}{2}\left(h-\frac{3D}{8}\right)+\frac{h}{2}+\frac{5D}{16}}\right\}\\
    &=\max\left\{\frac{3D}{8},h+\frac{D}{8}\right\}\\
    &=\max\left\{\frac{3D}{8},\frac{5D}{8}\right\}
    \text{ since }h\leq\frac{D}{2}\text{ from \lemref{cross}}.\\
    &=\frac{5D}{8}.
  \end{align*}

  \paragraph{$\pmb{(\I_8)}$} For $a\in\I_8\cap X$, we have $a>x+D$. By
   \lemref{wide-no-cross}, we then have $b<y'$. Consequently, after flipping
   $Y$, the edge $(a,\widetilde{b})$ cannot cross $(p_1,\widetilde{q_1})$.  The
   rest of the argument presented for $\I_1$ then goes through. Hence arguing
   similar to $\I_1$, we have $\mod{a-(\widetilde{b}+\Delta)}\leq\frac{5D}{8}$. 

  In order to show that $\overrightarrow{d}_H(\widetilde{Y}+\Delta,X)
  \leq\frac{5D}{8}$, we define the following intervals:
  \begin{align*}
  \J_1 &= \left(-\infty,p_2-\frac{5D}{8}-\Delta\right),
  \J_2 = \left[p_2-\frac{5D}{8}-\Delta,p_2+\frac{5D}{8}-\Delta\right], \\
  \J_3 &= \left[x'-\frac{5D}{8}-\Delta,x'+\frac{5D}{8}-\Delta\right],
  \J_4 = \left[x-\frac{5D}{8}-\Delta,x+\frac{5D}{8}-\Delta\right], \\
  \J_5 &= \left[p_1-\frac{5D}{8}-\Delta,p_1+\frac{5D}{8}-\Delta\right],
  \J_6 = \left(p_1+\frac{5D}{8}-\Delta,\infty\right).
  \end{align*}
  Since $x'-p_2=\eta_2'\leq D$ from \eqnref{eta-2}, the intervals $\J_2$ and
  $\J_3$ intersect. Since $x-x'=h\leq D$ from \lemref{cross}, the intervals
  $\J_3$ and $\J_4$ intersect. Since $p_1-x=\eta_1'\leq D$ from \eqnref{eta-1},
  the intervals $\J_4$ and $\J_5$ intersect. As a result, the union of the
  intervals is the entire real line. For an arbitrary point
  $\widetilde{b}\in\widetilde{Y}$ from any of the above intervals, we show that
  there exists a point $a\in X$ such that
  $\mod{a-(\widetilde{b}+\Delta)}\leq\frac{5D}{8}$. We present the argument only
  for $\J_1$ and $\J_6$.

  \paragraph{$\pmb{(\J_1)}$} For any $\widetilde{b}\in\widetilde{Y}\cap\J_1$, we
  first argue that $\widetilde{b}<\widetilde{q_1}$. When $\eta_1<\frac{D}{8}$,
  this is definitely true. When $\eta_1\geq\frac{D}{8}$, then \eqnref{eta-1}
  yields $h\leq\frac{3D}{8}$. So, 
  \begin{align*}
    \widetilde{b}-\widetilde{q_1} &<\left(p_2-\frac{5D}{8}-\Delta\right)
    -\widetilde{q_1}=(x'-\eta_2')-\frac{5D}{8}-\Delta-(\widetilde{y}-\eta_1) \\
    &=(x'-\widetilde{y})-\frac{5D}{8}+(\eta_1-\eta_2'-\Delta)
    =\frac{D}{2}-\frac{5D}{8}+(\eta_1-\eta_2'-\Delta) \\
    &=\eta_1-\eta_2'-\frac{D}{8}-\Delta
    =\eta_1-\eta_2'-\frac{D}{8}-\left(\eta_1-\eta_2-\frac{D}{8}\right),
    \text{ from \eqnref{delta-2-wide}} \\
    &=\eta_2-\eta_2'\leq0,\text{ from \eqnref{eta-2}}.
  \end{align*} 
  So, $\widetilde{b}<\widetilde{q_1}$. Consequently, $b> q_1$. Since $q_1=\max
  B_1$, for any edge $(a,b)$, we must have $a<x$ by the definition of the set
  $B_1$; see \eqnref{B1}. Therefore, $(a,b)$ crosses $(p_1,q_1)$. So after
  flipping $Y$, the edge $(a,\widetilde{b})$ does not cross
  $(p_1,\widetilde{q_1})$.  The rest of the argument presented for $\I_1$ then
  goes through. Hence arguing similar to $\I_1$, we have
  $\mod{a-(\widetilde{b}+\Delta)}\leq\frac{5D}{8}$. 
  
  \paragraph{$\pmb{(\J_6)}$} For $\widetilde{b}\in\widetilde{Y}\cap\J_6$, we
  argue that $\widetilde{b}>\widetilde{q_2}$. We have 
  \begin{align*}
  \widetilde{b}-\widetilde{q_2}&>\left(p_1+\frac{5D}{8}-\Delta\right)
  -\widetilde{q_2}=(x+\eta_1')+\frac{5D}{8}-\Delta
  -(\widetilde{y'}+\eta_2)\\
  &=(x-\widetilde{y'})+\frac{5D}{8}+(\eta_1'-\eta_2-\Delta)=-\frac{D}{2}
  +\frac{5D}{8}+(\eta_1'-\eta_2-\Delta)\\
  &=\eta_1'-\eta_2+\frac{D}{8}-\Delta\geq\eta_1-\eta_2+\frac{D}{8}-\Delta,
  \text{ as }\eta_1'\geq\eta_1 \\
  &\geq\min\left\{\eta_1-\eta_2+\frac{D}{8}-\left(\eta_1-\eta_2-\frac{D}{8}\right),
  \eta_1-\eta_2+\frac{D}{8}-\left(h-\frac{3D}{8}\right)\right\} \\
  &=\min\left\{\frac{D}{4},\eta_1-\eta_2+\frac{D}{2}-h\right\} \\
  &\geq0,\text{ since }\eta_1\geq\eta_2\text{ as assumed, and }h\leq\frac{D}{2}.
  \end{align*}
  So, $\widetilde{b}>\widetilde{q_2}$. Consequently, $b<q_2$. Since $q_2=\min
  B_2$, for edge $(a,b)$, we have $a>x'$ by the definition of the set $B_2$; see
  \eqnref{B2}. On the other hand, we have $a\notin[x',x]$ due to
  \lemref{standard}. As a result, $a>x$. Furthermore, we must have $a>p_1$. If
  not, i.e., $a\in[x,p_1]$, then the distortion of the pair of edges
  $(p_1,\widetilde{q_1})$ and $(a,\widetilde{b})$ exceeds $D$: 
  \begin{align*}
    (\widetilde{b}-\widetilde{q_1}) - (p_1-a) 
    &=[(\widetilde{b}-p_1)+(p_1-\widetilde{q_1})] - (p_1-a)\\
    &=\left[(\widetilde{b}-p_1) + \eta_1'+h+\frac{D}{2}+\eta_1\right]-(p_1-a) \\
    &\geq\left[(\widetilde{b}-p_1) + \eta_1'+h+\frac{D}{2}+\eta_1\right]-\eta_1',
    \text{ since }a\in(x,p_1) \\
    &=(\widetilde{b}-p_1)+h+\frac{D}{2}+\eta_1\\
    &>\left(\frac{5D}{8}-\Delta\right)+h+\frac{D}{2}+\eta_1,\text{ since }
    \widetilde{b}\in\J_5 \\
    &=D+\frac{D}{8}+h+\eta_1-\Delta\geq D,
    \text{ for any }\Delta\text{ satisfying }\eqnref{delta-2-wide}.
  \end{align*}
  Therefore, $a>p_1$ and the edge $(a,\widetilde{b})$ does not cross
  $(p_1,\widetilde{q_1})$. Hence arguing similar to $\I_1$, we have
  $\mod{a-(\widetilde{b}+\Delta)}\leq\frac{5D}{8}$. 
\end{case}
This completes the proof for wide crossing.
\end{proof}

In order to complete our analysis of various types of correspondences, we show
now that a flip is not required if there is no wide crossing in $\C$.
\begin{theorem}[No Wide Crossing]\label{thm:no-wide} Let $\C$ be a
correspondence between two compact sets $X,Y\subset\R$ with distortion $D$. If
there are double crossings but not wide, then there exists a value $\Delta\in\R$
such that
$$d_H(X,Y+\Delta)\leq\frac{5D}{8}.$$
\end{theorem}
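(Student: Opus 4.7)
The plan is to closely mirror the proof of \thmref{no-cross}, exploiting the fact that the no-wide-crossing hypothesis forces the sets $A$ and $A'$ defined there to be empty: an edge $(p,q)\in\C$ realizing $p>x+D$ with $q\in(y',y)$ crosses $(x,y)$ and is wide by definition, and the degenerate boundary cases $q\in\{y',y\}$ violate the distortion bound against the appropriate standard edge outright. The symmetric argument empties $A'$. Hence the case analysis reduces to whether $B$ and $B'$ are populated.

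In the sub-case $B\neq\emptyset$ (resp.\ $B'\neq\emptyset$, resp.\ both), I would reuse the translation $\Delta=\tfrac{3}{4}\eta$ (resp.\ $\tfrac{3}{4}\eta'$, resp.\ $\tfrac{3}{4}\max\{\eta,\eta'\}$) and the interval partition used in Cases (2) and (3) of \thmref{no-cross}. The interval-by-interval bounds transfer verbatim whenever they rely only on the distortion of $\C$ against the standard edges and do not invoke the absence of double crossings. The single new feature is that double-crossing edges, previously excluded, may now place endpoints in the middle intervals. For such an endpoint $a\in X$ paired with $b\in Y$ via a double crossing, \lemref{cross}(i)--(iii) localize the offsets $\eps_1=a-x$ and $\eps_2=y'-b$ into $[h,D-h]$ with $h\leq\tfrac{D}{2}$, and the no-wide restriction further forces $\eps_1<D$ and $\eps_2<\tfrac{D}{2}$; these together bound $\mod{a-(b+\Delta)}$ by $\tfrac{D}{2}+\tfrac{\Delta}{3}$ in each relevant interval, either by direct comparison or by taking the nearby standard endpoint $y$ (or $y'$) as the witness $Y$-point instead of $b$ itself.

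In the trivial sub-case $B=B'=\emptyset$, I would take $\Delta=0$ and show $d_H(X,Y)\leq\tfrac{5D}{8}$ directly. The argument is a strengthened version of the opening paragraph of \thmref{no-cross}: for any $a\in X\cap[x',x]$, distortion against both $(x',y')$ and $(x,y)$ already gives $\mod{a-t}\leq\tfrac{D}{2}$ for any $(a,t)\in\C$. For $a$ outside $[x',x]$, the combined emptiness of $A,A',B,B'$ and the no-wide hypothesis confine any correspondent $t$ to the range where distortion against the uncrossed standard edge pins $\mod{a-t}\leq\tfrac{D}{2}$. The argument for $b\in Y$ is symmetric.

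The main obstacle I anticipate is precisely this trivial sub-case: double crossings may legitimately deposit an $a\in X$ just outside the interval $[x',x]$ (with $\eps_1\leq D$ by no-wide) whose only correspondent $t$ sits far away, so one must simultaneously use \lemref{cross}(iv) and the emptiness of $B,B'$ to ensure that a different $Y$-point---the standard endpoint $y$ or $y'$---lies within $\tfrac{D}{2}$ of $a$. Patching this verification into the interval arguments of Cases (2) and (3), and handling the trivial sub-case separately, completes the proof.
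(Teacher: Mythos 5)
Your opening step---that the no-wide-crossing hypothesis empties the sets $A$ and $A'$---rests on a misreading of the definition, and it removes exactly the case that carries the substance of this theorem. In the paper a \emph{wide crossing} is a wide \emph{double} crossing: the definition is stated in the immediate aftermath of the double-crossing discussion, \figref{wide-cross} and \lemref{cross} treat $(p,q)$ as crossing both $(x',y')$ and $(x,y)$, and \lemref{wide-cross} explicitly allows a wide crossing to coexist with an edge $(p_0,q_0)$ having $p_0>x+D$ and $q_0\in(y',y)$, i.e.\ with $A\neq\emptyset$. An edge contributing to $A$ has its $Y$-endpoint in $[y',y]$, so it crosses only $(x,y)$ and not $(x',y')$; it is not a double crossing, and the hypothesis of \thmref{no-wide} says nothing about it. Indeed, the paper's own proof devotes its main effort to the case $A\neq\emptyset$ interacting with a double crossing $(p_2,q_2)$, where comparing the two edges yields the new inequality $\eta_2+\eps+h\leq\eta_1+\eps'$ and forces a different translation $\Delta=\eps-\eta_1$; none of this appears in your outline. (Your reading would also break the exhaustiveness of the three-way split behind \thmref{gh}: a configuration with a non-wide double crossing and $A\neq\emptyset$ would be covered by none of \thmref{no-cross}, \thmref{wide-cross}, or \thmref{no-wide}, since the flip argument of \thmref{wide-cross} needs \lemref{cross}, which only applies to double crossings.)

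The one sub-case you do treat in detail also fails as stated. With $A=A'=B=B'=\emptyset$ you take $\Delta=0$ and claim $d_H(X,Y)\leq\frac{5D}{8}$, but a non-wide double crossing can place a point of $Y$ at distance close to $D$ from all of $X$. Concretely, take $x'=0$, $x=0.1D$, $y'=-0.5D$, $y=0.6D$, and the double crossing $(p,q)=(D,\,-0.9D)$ with $\C=\{(x',y'),(x,y),(p,q)\}$: one checks the distortion is $D$ and is attained by the designated pair, the crossing is double but not wide since $p,q\in(x'-D,x+D)=(-D,1.1D)$, and all four auxiliary sets are empty; yet $\overrightarrow{d}_H(Y,X)=\mod{q-x'}=0.9D>\frac{5D}{8}$. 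Even without a flip, the translation must be driven by the offsets of the double crossing itself (the paper's Case (3) takes $\Delta=\frac{3}{4}\max\{\eta_1,\eps_2\}$ and mirrors Case (2) of \thmref{wide-cross}); \lemref{cross}(iv) cannot rescue $\Delta=0$, because it bounds $\mod{p-\widetilde{q}}$ for the \emph{reflected} point and is irrelevant when $Y$ is not flipped. So both the reduction to ``$A=A'=\emptyset$'' and the trivial sub-case need to be replaced by genuine arguments.
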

\begin{proof}
We assume that there are double crossings in $\C$, but none of them are wide;
see \figref{cross-1}. 
\begin{figure}[thb]
  \centering
  \includegraphics[scale=0.9]{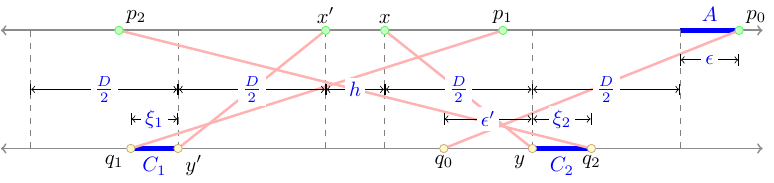}
  \caption[No Wide Crossing]{No wide crossing exists, but there are double
  crossings.}
  \label{fig:cross-1}
\end{figure}
Since there are double crossings, at least one of the following two subsets of
$Y$ is non-empty:
\begin{equation}\label{eqn:C}
  \begin{split}
    C_1&=\left\{b\in Y\cap\left(-\infty,y'\right)\mid
    \exists~a\in X\cap(x,\infty)\mbox{ with }(a,b)\in\C\right\}, \\ 
    C_2&=\left\{b\in Y\cap\left(y,\infty\right)\mid
    \exists~a\in X\cap(-\infty, x')\mbox{ with }(a,b)\in\C\right\}. \\ 
  \end{split}
\end{equation}
We also define $q_1=\min C_1$, $q_2=\max C_2$, and edges
$(p_1,q_1),(p_2,q_2)\in\C$. Let us now denote $\xi_1=y'-q_1$ and $\xi_2=q_2-y$.
If $d_H(X,Y)\leq\frac{5D}{8}$, we take $\Delta=0$. Let us assume the non-trivial
case that $d_H(X,Y)>\frac{5D}{8}$. Since wide crossings are not allowed, we have
from \defref{wide-cross} that $p_2\geq x'-D,p_1\leq p_0$ and from \lemref{cross}
that $h\leq\frac{D}{2}$ . Consequently, the assumption that
$d_H(X,Y)>\frac{5D}{8}$ implies either $\eps>\frac{D}{8}$, $\xi_1>\frac{D}{8}$,
or $\xi_2>\frac{D}{8}$. Without any loss of generality, we also assume that
$\xi_1\geq\xi_2$ so that the translation of $Y$ occurs from left to right.
We now consider the following two cases.
\begin{case}[$\eps\geq\xi_1$]
  This case is equivalent to Case ($1$) of \thmref{no-cross}. In Case ($1$) of
  \thmref{no-cross}, the intervals $\I_1,\I_6$ and $\J_1,\J_5$ use the fact that
  there are no double crossings. We show here that arguments presented for the
  intervals still work in the presence of (non-wide) double crossings. 

  \paragraph{($\pmb{\I_1}$)} For any $a\in\I_1\cap X$ with an edge $(a,b)$, we
  show that $(a,b)$ cannot cross $(x,y)$. And, the rest of the argument for
  $\I_1$ goes through. For any $a\in\I_1\cap X$, we note that $a$ satisfies
  \eqnref{wide-p}, the condition of a wide crossing. When $\eps\leq\frac{D}{4}$,
  we have 
  \begin{align*}
    x'-a&>\frac{D}{2}+\left(\frac{5D}{8}-\Delta\right)
    =D-\Delta+\frac{D}{8}=D-\left(\eps-\frac{D}{8}\right)+\frac{D}{8},
    \text{ from \eqnref{delta-1}} \\
    &=D+\left(\frac{D}{4}-\eps\right)\geq D,
    \text{ since }\eps\leq\frac{D}{4}.
  \end{align*}
  When $\eps>\frac{D}{4}$, we have 
  $$x'-a>\frac{D}{2}+\left(\frac{5D}{8}-\Delta\right)\geq\frac{D}{2}, \text{ as
  }\Delta\leq\frac{5D}{8}.$$  Consequently, an edge $(a,b)\in\C$ cannot be a
  double crossing, otherwise it would become a wide crossing. Hence, $(a,b)$
  cannot cross $(x,y)$.

  \paragraph{($\pmb{\I_6}$)}  For any $\in\I_6\cap X$ with an edge $(a,b)$, we
  argue that $(a,b)$ cannot cross $(x,y)$. And, the rest of the argument for
  $\I_6$ goes through. We have already noted there that $a>\max A$. Hence,
  $(a,b)$ cannot cross $(x,y)$, otherwise it becomes a wide crossing.

  \paragraph{($\pmb{\J_1}$)}  Since $h\leq\frac{D}{2}$ from \lemref{cross}, here
  we can instead consider 
  $$\J_1=\left(-\infty,x'-\frac{5D}{8}-\Delta\right).$$ For any $b\in\J_1\cap Y$
  with an edge $(a,b)$, we argue that $(a,b)$ cannot cross $(x,y)$. Hence, the
  rest of the argument for $\J_1$ presented in Case ($1$) of \thmref{no-cross}
  goes through. We note that $b<q_1$, since
  $\Delta\geq\eps-\frac{D}{8}\geq\xi_1-\frac{D}{8}$. From the definition of the
  set $C_1$, the edge $(a,b)$ cannot be a double crossing.

  \paragraph{($\pmb{\J_5}$)} For any $b\in\J_5\cap Y$ with an edge $(a,b)$, we
  argue that $(a,b)$ cannot cross $(p_0,q_0)$, i.e., $a>p_0$. And, the rest of
  the argument for $\J_5$ goes through. Due to \lemref{standard-1}, we can have
  either $a<x'$, $a\in[x,p_0]$, or $a>p_0$. We arrive at a contradiction when
  assumed the first two.
  
  We first assume that $a<x'$, i.e., $(a,b)$ is a double crossing. This yields a
  contradiction, as $(a,b)$ becomes a wide crossing. When $\eps\leq\frac{D}{4}$,
  we have 
  \begin{align*}
    x'-a&\geq b-y,\text{ from \lemref{cross}} \\
    &>\frac{D}{2}+\eps+\left(\frac{5D}{8}-\Delta\right) 
    \geq\frac{D}{2}+\eps+\frac{5D}{8}-\left(\eps-\frac{D}{8}\right)\geq D.
  \end{align*}
  When $\eps>\frac{D}{4}$, we have 
  \begin{align*}
    x'-a&\geq b-y>\frac{D}{2}+\eps+\left(\frac{5D}{8}-\Delta\right) \\
    &\geq\frac{D}{2}+\eps+\frac{5D}{8}-\left(\eps+\eps'-\frac{3D}{8}\right),
    \text{ from \eqnref{delta-1}} \\
    &=\frac{3D}{2}-\eps'\geq\frac{D}{2},\text{ since }\eps'\leq D
    \text{ from \eqnref{eps}}.
  \end{align*}
  So, $(a,b)$ cannot be a double crossing.

  We now assume that $a\in[x,p_0]$ to arrive at the contradiction that the
  distortion of $(a,b)$ and $(p_0,q_0)$ exceeds $D$:
  \begin{align*}
    (b-q_0)-(p_0-a)&=[(b-p_0)+(p_0-q_0)]-(p_0-a) \\
    &>\left[\left(\frac{5D}{8}-\Delta\right)
    +\left(\eps+\frac{D}{2}+\eps'\right)\right]-(p_0-a) \\
    &\geq\left[\frac{9D}{8}+\eps+\eps'
    -\left(\eps+\eps'-\frac{3D}{8}\right)\right]
    -(p_0-a),\text{ from \eqnref{delta-1}} \\
    &=\frac{12D}{8}-(p_0-a)\geq\frac{12D}{8}-(b-a),\text{ as }b\geq p_0 \\
    &\geq\frac{12D}{8}-\frac{D}{2},\text{ from \lemref{standard-1}} \\&\geq D.
  \end{align*}
  Hence, $a>p_0$, i.e., $(a,b)$ cannot cross $(p_0,q_0)$.
\end{case}

\begin{case}[$\eps\geq\xi_1$]
  This case is equivalent to Case ($2$) of \thmref{no-cross}, and the argument
  is analogous. 
\end{case}
This concludes the proof.
\end{proof}

We conclude this section by showing in \thmref{lb} that the bound of
\thmref{gh} is a tight upper bound in the following sense:
\begin{theorem}[Tightness of the Bound]\label{thm:lb}
For any $0<\eps<\frac{1}{4}$ and $\delta>0$, there exist compact $X,Y\subset\R$
with $d_{GH}(X,Y)=\delta$
and $$d_{H,iso}(X,Y)=\left(\frac{5}{4}-\eps\right)\delta.$$
\end{theorem}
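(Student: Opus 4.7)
To prove tightness I would construct, for each $\eps \in (0, 1/4)$ and $\delta > 0$, explicit compact sets $X, Y \subset \R$ with $d_{GH}(X,Y) = \delta$ and $d_{H,iso}(X,Y) = (5/4 - \eps)\delta$. The construction is driven by the extremal configuration identified in Case 1 of \thmref{no-cross}: saturating the bound $5D/8$ requires the inner parameters $\eps$ and $\eps'$ (in that proof's notation) to equal $D/2$, with optimal shift $\Delta = 3D/8$. Setting $D = 2\delta$, this points to a ``skeleton'' $X_0 = \{0, 3D/2\}$ and $Y_0 = \{-D/2, 0, D/2\}$ joined by the correspondence $\C_0 = \{(0, -D/2), (0, D/2), (3D/2, 0)\}$, which has distortion exactly $D$.

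The skeleton alone is insufficient, since the translation $t = D$ already achieves $d_H(X_0, Y_0 + D) = D/2$, well below $5D/8$. I would therefore augment $X$ and $Y$ with carefully placed \emph{anchor} pairs, far from the skeleton, designed to (i) lock the admissible translations into an $O(\eps)$-window around the optimum $\Delta^\ast = 3D/8$, while (ii) being sufficiently isolated that they cannot be cross-paired with skeleton points in any low-distortion correspondence. With the anchors in place, the proof splits into two verifications. For $d_{GH}(X, Y) = \delta$: extend $\C_0$ by the canonical self-pairing of anchors, and argue by enumeration that no correspondence achieves distortion $< D$, since cross-pairings of anchor with skeleton would incur distortion far exceeding $D$, and the skeleton itself has minimum distortion $D$ as shown in the upper-bound proof. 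For $d_{H,iso}(X, Y) = (5/4 - \eps)\delta$: exhibit the translation $\Delta^\ast$ explicitly to realize the target value, and for the matching lower bound, partition the translation parameter into a constant number of intervals and, in each, produce a witness point of $X$ or of $T(Y)$ whose nearest-neighbor distance exceeds $(5/4 - \eps)\delta$. Reflections are absorbed into the same case analysis by making the anchor placement asymmetric relative to the skeleton, so that no reflection can simultaneously align anchors and skeleton.

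\textbf{Main obstacle.} The crux is calibrating the anchor offset as a function of $\eps$. The anchors must sit far enough from the skeleton to prevent leakage into correspondences (otherwise $d_{GH}$ drops below $\delta$), yet their relative offset between $X$ and $Y$ must precisely pin the admissible translations to the $O(\eps)$-window realizing the ratio $5/4 - \eps$. I expect the anchor distance to scale like $\Theta(D/\eps)$, so the construction grows unboundedly as $\eps \to 0$, consistent with the fact that the limiting ratio $5/4$ is not attained by any single compact pair. Once the anchor placement is pinned down, both the correspondence enumeration and the translation-interval analysis reduce to routine bookkeeping closely mirroring the arguments already developed in \thmref{no-cross}.
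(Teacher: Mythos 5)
There is a genuine gap, and it sits exactly at the step you flag as the "main obstacle": the anchor mechanism cannot simultaneously pin the translation and preserve $d_{GH}(X,Y)=\delta$. Write $D=2\delta$ and $r=(\tfrac54-\eps)\delta=\tfrac58 D-\tfrac{\eps D}{2}$. A single isolated anchor pair $(a,\,a-c)$ contributes $|t-c|$ to $d_H(X,Y+t)$, so it only confines $t$ to a window of width $2r\approx\tfrac54 D$ --- not an $O(\eps)$-window; in particular it does not exclude the translation $t=D$ at which your skeleton already realizes $d_H=D/2$. To shrink the window further you need (at least) two forced pairs with offsets $c_1,c_2$, and the window then has width $2r-|c_1-c_2|$; making it $O(\eps D)$ forces $|c_1-c_2|\geq \tfrac54 D-O(\eps D)>D$. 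But any correspondence must relate the far-away $X$-anchors to the far-away $Y$-anchors (no other points of $Y$ are at comparable mutual distance), and two non-crossing pairs $(a_1,b_1),(a_2,b_2)$ distort by exactly $|(a_1-b_1)-(a_2-b_2)|=|c_1-c_2|>D$; the crossed pairing is no better, and asymmetry rules out reflection-type pairings. So with the anchors in place every correspondence has distortion $>2\delta$ and $d_{GH}(X,Y)>\delta$, while without offsets spanning more than $D$ the admissible translations still form an interval of width at least $2r-D\approx D/4$, inside which your three-point skeleton dips back down to $d_H=D/2$. The same obstruction defeats local variants (e.g.\ one $X$-anchor serving two $Y$-points at spacing $s$ forces $s\leq D$), so no $\Theta(D/\eps)$-far, $O(1)$-point gadget can close the gap.

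The missing idea --- and the heart of the paper's proof of \thmref{lb} --- is that the set of near-optimal translations is not confined to one short window but is instead made \emph{discrete}: the construction interleaves a cascade of $k+1=\Theta(1/\eps)$ points $x_0,\dots,x_k$ and $y_0,\dots,y_k$ (with $\eps=\tfrac{1}{4(2k+1)}$, $y_i-x=4i\eps\delta$, $x_k-y_i=2\delta+4(k-i+1)\eps\delta$) inside a region of diameter $O(\delta)$. The pairs $(x_i,y_i)$ together with $(x',y'),(x,y)$ form a correspondence of distortion exactly $2\delta$, while $d_H(X,Y+\Delta)$ becomes a sawtooth in $\Delta$ whose value equals $(\tfrac54-\eps)\delta$ precisely at the $k+1$ isolated shifts $\Delta=\tfrac{3\delta}{4}+\eps\delta+4i\eps\delta$ and exceeds it everywhere else (the reflected copy $\widetilde Y$ never does better than $\tfrac{3\delta}{2}$). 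Note this also contradicts your predicted asymptotics: the extremal examples have bounded diameter and unboundedly many points as $\eps\to0$, rather than $O(1)$ points spread over a $\Theta(D/\eps)$ range. Your plan correctly identifies the extremal skeleton from Case 1 of \thmref{no-cross} and the need to verify both $d_{GH}$ and $d_{H,iso}$, but as written the construction either fails the $d_{GH}=\delta$ requirement or fails to force $d_{H,iso}$ up to $(\tfrac54-\eps)\delta$.
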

\begin{proof}
It suffices to assume that $\eps=\frac{1}{4(2k+1)}$ for some
$k\in\mathbb{N}$. We now take (sorted)
$$X=\{x',x,x_k,x_{k-1},\cdots,x_1,x_0\}\mbox{ and
}Y=\{y',y_0,y_1,\cdots,y_{k-1},y_k,y\},$$ with distances as shown in
\figref{lb}.  As a result, we also have $(y_i-x) =4i\eps\delta$ and
$(x_k-y_i)=2\delta+4(k-i+1)\eps\delta$, $\forall i\in\{0,1,2,\cdots,k\}$.
\begin{sidewaysfigure}[]
    \centering
    \includegraphics[scale=0.6]{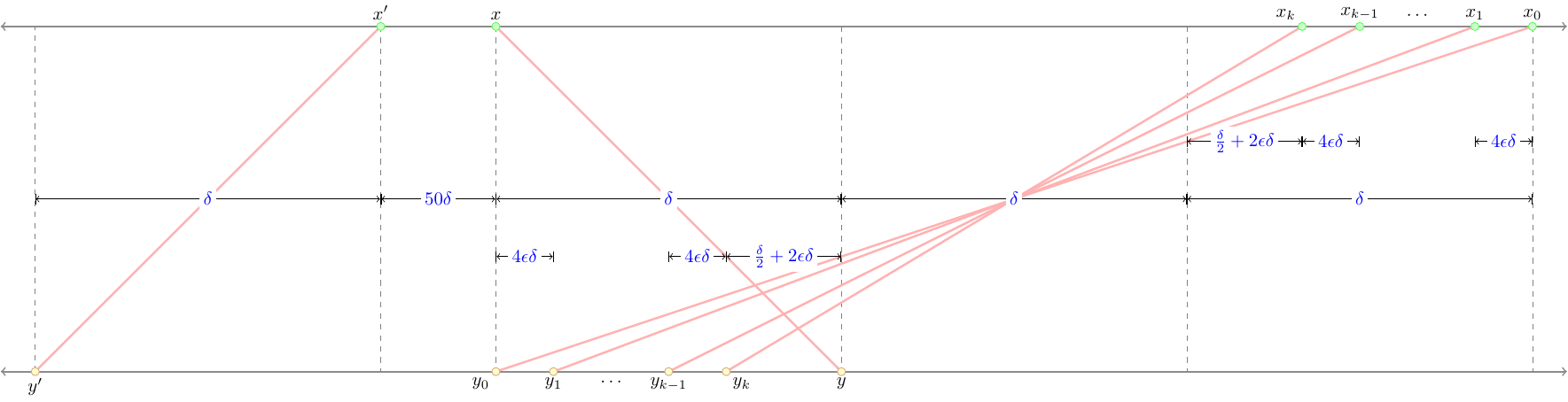}
    \includegraphics[scale=0.6]{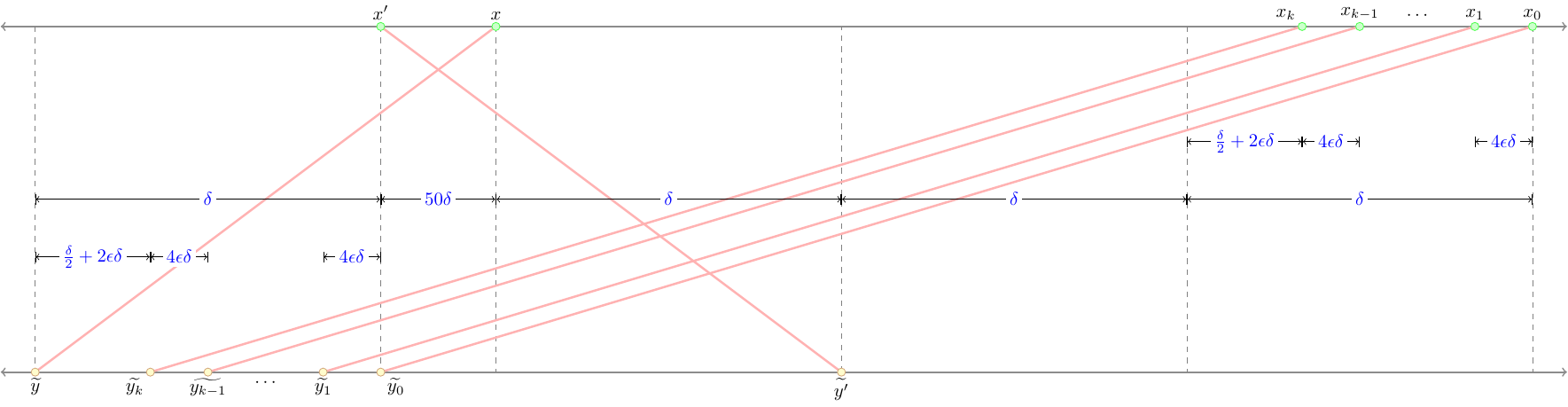}
    \caption[Tight Upper Bound]{The top picture demonstrates the configuration
      of $X$ and $Y$. The correspondence $\C$ is shown using the red edges. In
      the bottom, $X$ and~$\widetilde{Y}$, the reflection of $Y$ about the
      midpoint of $x$ and $x'$, are shown, along with the correspondence $\C$ by
      the red edges.}\label{fig:lb}
\end{sidewaysfigure}
To prove our claim that $d_{H,iso}(X,Y)=\left(\frac{5}{4}-\eps\right)\delta$, we
consider translating both $Y$ and $\widetilde{Y}$, the reflection of $Y$ about
the midpoint of $x$ and $x'$.

When translating $\widetilde{Y}$, we note that the smallest Hausdorff distance
of $\frac{3\delta}{2}$ is achieved for a translation of $\widetilde{Y}$ by an
amount of $\frac{\delta}{2}$ to the right. For this amount of translation,
$\widetilde{y'}$ becomes the midpoint of $x$ and $x_0$, where $\widetilde{y'}$
is the reflection of~$y'$ about the midpoint of $x$ and $x'$. And all the other
points of $\widetilde{Y}$ are at distance at least
$\left(50\delta-\frac{\delta}{2}\right)$ from $x$.

Now, we consider translating $Y$ by an amount $\Delta\in\R$. We first observe
that $d_H(X,Y)=2\delta$, and the distance is attained by $x_0$ and $y$. Now, a
translation of $Y$ to the left is only going to increase the Hausdorff distance
$d_H(X,Y+\Delta)$. Taking this argument one step further we get the following
analysis as we vary~$\Delta$:

If $\Delta\in\left(-\infty,\frac{3\delta}{4}+\eps\delta\right)$, then the pair
$(x_0,y)$ gives
$$d_H(X,Y+\Delta)=2\delta-\Delta>2\delta-\frac{3\delta}{4}-\eps\delta
=\left(\frac{5}{4}-\eps\right)\delta.$$
For $\Delta=\frac{3\delta}{4}+\eps\delta$, we get
$x_0-(y+\Delta)=\left(\frac{5}{4}-\eps\right)\delta$. Also,
$y_k+\Delta-x=\left(\frac{5}{4}-\eps\right)\delta$ and
$x_k-(y_k+\Delta)=\left(\frac{5}{4}-\eps\right)\delta+4\eps\delta$. So,
$d_H(X,Y+\Delta)=\left(\frac{5}{4}-\eps\right)\delta$, which is attained by
$\mod{x_0-y_k}$.

Following this pattern, we conclude that
$d_H(X,Y+\Delta)>\left(\frac{5}{4}-\eps\right)\delta$, except for
$\Delta=\frac{3\delta}{4}+\eps\delta+4i\eps\delta$ for
$i\in\{0,1,2,\cdots,k\}$. Therefore,
$d_{H,iso}(X,Y)=\left(\frac{5}{4}-\eps\right)\delta$. We summarize our analysis
in \tabref{lb}.

{
\setlength{\tabcolsep}{4pt}
\renewcommand{\arraystretch}{1.8}
\begin{table}[tbh]
  \centering
  \footnotesize
  \begin{tabular}{|p{5cm}|p{1.4cm}|p{2.5cm}|p{2cm}|}
    \hline
    $\Delta$ & $\overrightarrow{d}_H(X,Y+\Delta)$ & $\overrightarrow{d}_H(Y+\Delta,X)$ & $d_H(X,Y+\Delta)$\\ 
    \hline
    $\left(-\infty,\frac{3\delta}{4}+\eps\delta\right)$ & $(x_0,y)$ & -- & $>\left(\frac{5}{4}-\eps\right)\delta$ \\ 
    \hline
    $\frac{3\delta}{4}+\eps\delta$ & $(x_0,y)$ & $(y_k,x),(y,x_k)$ & $\left(\frac{5}{4}-\eps\right)\delta$\\
    \hline
    $\left(\frac{3\delta}{4}+\eps\delta,\frac{3\delta}{4}+\eps\delta+4\eps\delta\right)$ & -- & $(y_k,x),(y_k,x_k)$ & $>\left(\frac{5}{4}-\eps\right)\delta$\\
    \hline
    $\frac{3\delta}{4}+\eps\delta+4\eps\delta$ & -- & $(y_{k-1},x),(y_k,x_k)$ & $\left(\frac{5}{4}-\eps\right)\delta$\\
    \hline
    $\cdots$ & $\cdots$ & $\cdots$ & $\cdots$ \\
    \hline
    $\left(\frac{3\delta}{4}+\eps\delta+4i\eps\delta,\frac{3\delta}{4}+\eps\delta+4(i+1)\eps\delta\right)$ & -- & $(y_{k-i},x),(y_{k-i},x_k)$ & $>\left(\frac{5}{4}-\eps\right)\delta$\\
    \hline
    $\frac{3\delta}{4}+\eps\delta+4(i+1)\eps\delta$ & -- & $(y_{k-i-1},x)$, $(y_{k-i},x_k)$ & $\left(\frac{5}{4}-\eps\right)\delta$\\
    \hline
    $\cdots$ & $\cdots$ & $\cdots$ & $\cdots$ \\
    \hline
    $\frac{3\delta}{4}+\eps\delta+4k\eps\delta$ & $(x,y_0)$ & $(y_0,x)$ & $(\frac{5}{4}-\eps)\delta$\\
    \hline
    $\left(\frac{3\delta}{4}+\eps\delta+4k\eps\delta,\infty\right)$ & $(x,y_0)$ & -- & $>\left(\frac{5}{4}-\eps\right)\delta$\\
    \hline     
  \end{tabular}
  \caption[Summary of $d_H(X,Y+\Delta)$]{A summary of
   $d_H(X,Y+\Delta)$ is recorded for $\Delta\in\R$. In the second and third
   columns, the directed Hausdorff distances are achieved for the shown pairs of
   points. The other columns are self-explanatory.}
  \label{tab:lb}
\end{table}
}

With the $d_{H,iso}(X,Y)$ computed, we now define the following correspondence
$\C$ between $X$ and $Y$:
$$\C=\big\{(x_i,y_i)\mid
i\in\{0,1,\cdots,k\}\big\}\cup\big\{(x',y'),(x,y)\big\}.$$ The distortion of
$\C$ is evidently $2\delta$. Moreover, we observe that $\C$ is an optimal
correspondence. Therefore, $d_{GH}(X,Y)=\delta$.
\end{proof}

\section{Conclusions and Future Work}
In our investigation, we focus on approximating Gromov-Hausdorff distance by the
Hausdorff distance for subsets of $\R^1$. The use of $d_{H,iso}$ yields an
approximation algorithm with a tight factor. We do not know, however, if other
algorithms can be devised with a better approximation factor. We believe that
the problem of computing the Gromov-Hausdorff distance or even approximating it
by a factor less than $\frac{5}{4}$ in $\R^1$ is NP-hard. The question of a
polynomial-time approximation algorithm for subsets of $\R^d$ is still open for
$d\geq2$.

\section{Acknowledgments}
The authors would like to thank Yusu Wang and Facundo M\'emoli for hosting
Sushovan Majhi at OSU and for discussions on the project during the visit. We
also thank Helmut Alt for his valuable feedback during his visit at Tulane
University in the Spring of 2019. 

\bibliography{main}

\begin{thebibliography}{10}

\bibitem{berger_encounter_2000}
M.~Berger, ``Encounter with a {Geometer}, {Part} {I},'' vol.~47, no.~2, p.~12,
  2000.

\bibitem{memoli_theoretical_2005}
F.~M\'emoli and G.~Sapiro, ``A {Theoretical} and {Computational} {Framework}
  for {Isometry} {Invariant} {Recognition} of {Point} {Cloud} {Data},'' {\em
  Foundations of Computational Mathematics}, vol.~5, pp.~313--347, July 2005.

\bibitem{memoli_use_nodate}
F.~M\'emoli, ``On the use of {G}romov-{H}ausdorff distances for shape
  comparison,'' in {\em Eurographics Symposium on Point-Based Graphics}
  (M.~Botsch, R.~Pajarola, B.~Chen, and M.~Zwicker, eds.), The Eurographics
  Association, 2007.

\bibitem{memoli_gromov-hausdorff_2008}
F.~M\'emoli, ``Gromov-{Hausdorff} distances in {Euclidean} spaces,'' in {\em
  2008 {IEEE} {Computer} {Society} {Conference} on {Computer} {Vision} and
  {Pattern} {Recognition} {Workshops}}, (Anchorage, AK, USA), pp.~1--8, IEEE,
  June 2008.

\bibitem{memoli_properties_2012}
F.~M\'emoli, ``Some {Properties} of {Gromov}-{Hausdorff} {Distances},'' {\em
  Discrete \& Computational Geometry}, vol.~48, pp.~416--440, Sept. 2012.

\bibitem{agarwal_computing_2015}
P.~K. Agarwal, K.~Fox, A.~Nath, A.~Sidiropoulos, and Y.~Wang, ``Computing the
  {G}romov-{H}ausdorff distance for metric trees,'' {\em ACM Trans.
  Algorithms}, vol.~14, pp.~24:1--24:20, Apr. 2018.

\bibitem{hutchison_approximating_2005}
A.~Hall and C.~Papadimitriou, ``Approximating the {Distortion},'' in {\em
  Approximation, {Randomization} and {Combinatorial} {Optimization}.
  {Algorithms} and {Techniques}}, vol.~3624, pp.~111--122, Berlin, Heidelberg:
  Springer Berlin Heidelberg, 2005.

\bibitem{Alt1995}
H.~Alt, B.~Behrends, and J.~Bl{\"o}mer, ``Approximate matching of polygonal
  shapes,'' {\em Annals of Mathematics and Artificial Intelligence}, vol.~13,
  pp.~251--265, Sep 1995.

\bibitem{gromov_metric_2007}
M.~Gromov, {\em Metric structures for {Riemannian} and non-{Riemannian}
  spaces}.
\newblock Modern {Birkhäuser} classics, Boston: Birkhäuser, 2007.

\bibitem{burago_course_2001}
D.~Burago, Y.~Burago, and S.~Ivanov, {\em A {Course} in {Metric} {Geometry}},
  vol.~33 of {\em Graduate {Studies} in {Mathematics}}.
\newblock Providence, Rhode Island: American Mathematical Society, June 2001.

\bibitem{kenyon_low_2010}
C.~Kenyon, Y.~Rabani, and A.~Sinclair, ``Low {Distortion} {Maps} {Between}
  {Point} {Sets},'' {\em SIAM Journal on Computing}, vol.~39, pp.~1617--1636,
  Jan. 2010.

\bibitem{ROTE1991123}
G.~Rote, ``Computing the minimum {H}ausdorff distance between two point sets on
  a line under translation,'' {\em Information Processing Letters}, vol.~38,
  no.~3, pp.~123 -- 127, 1991.

\bibitem{artin2011algebra}
M.~Artin, {\em Algebra}.
\newblock Pearson Prentice Hall, 2011.

\end{thebibliography}
\bibliographystyle{ieeetr}

\newpage
\appendix

\renewcommand{\thesection}{\Alph{section}} 

\section{Additional Lemmas}
\begin{lemma}\label{lem:standard} Let $\C$ be any correspondence between
$X,Y\subset\R^1$ in its standard configuration (see \defref{standard}) with
$Dist(\C)=D$. For $s\in[x',x]$ and $(s,t)\in\C$, we have
$\mod{s-t}\leq\frac{D}{2}$. Consequently, $(s,t)$ does not cross $(x,y)$ or
$(x',y')$.
\end{lemma}
\begin{proof}
  We prove by contradiction. Without loss of generality, let us assume that
  $t>s+\frac{D}{2}$. This implies that
  $$\bigmod{\mod{x'-s}-\mod{y'-t}}=\mod{x'-y'}+\mod{t-s}>\frac{D}{2}+\frac{D}{2}=D$$
  This contradicts the assumption that $Dist(\C)=D$. So,
  $\mod{t-s}\leq\frac{D}{2}$.
\end{proof}
\begin{lemma}\label{lem:standard-1} Let $\C$ be any correspondence between
  $X,Y\subset\R^1$ in its standard configuration (see \defref{standard}) with
  $Dist(\C)=D$. If $(s,t)\in\C$ such that $s>x$ and $t>y$, then
  $\mod{s-t}\leq\frac{D}{2}$.
\end{lemma}
\begin{proof}
    We prove by contradiction. Without loss of generality, let us assume that
    $t>s+\frac{D}{2}$. This implies that
    $$\bigmod{\mod{x'-s}-\mod{y'-t}}=\mod{x'-y'}+\mod{t-s}>\frac{D}{2}+\frac{D}{2}=D$$
    This contradicts the assumption that $Dist(\C)=D$. So,
    $\mod{t-s}\leq\frac{D}{2}$.
\end{proof}

\begin{lemma}\label{lem:AB} Let $\C\in\C(X,Y)$ be a correspondence between
$X,Y\subset\R$ that does not have any double crossings. The sets $A,A'$ and
$B,B'$ are as defined in \eqnref{A} and \eqnref{B}, respectively. Then,
\begin{enumerate}[(a)]
  \item $d_H(X,Y)>\frac{5D}{8}$ implies at least one of the above sets is
  non-empty.
  \item $A,A'$ cannot both be non-empty.
  \item $B,B'$ cannot both be non-empty.
  \item the pairs $A,B'$ and $A',B$ cannot both be non-empty.
\end{enumerate}
\end{lemma}

\begin{proof}
  (a)
  In this case, there must exist either 
  \begin{enumerate}[(i)]
  \item $a_0\in X$ with $\min\limits_{b\in Y}\mod{a_0-b}>\frac{5D}{8}$, or
  \item $b_0\in Y$ with $\min\limits_{a\in X}\mod{a-b_0}>\frac{5D}{8}$.
  \end{enumerate}

  If such an $a_0$ exists, we first observe that $a_0$ cannot belong to $[x',x]$
  due to \lemref{standard} and the definition of $a_0$. From the definition of
  $a_0$, we further note that either $a_0>y+\frac{5D}{8}$ or
  $a_0<y'-\frac{5D}{8}$; see \figref{ub-all} for reference. In either case, for
  an edge $(a_0,t)\in\C$, we now argue that $t\in[y',y]$. To see this, consider
  the first case: $a_0>y+\frac{5D}{8}$. If we assume the contrary, i.e.,
  $y\notin[y',y]$, then we must have $t>y$, since no double crossing is allowed.
  However, \lemref{standard-1} would then imply that
  $\mod{a_0-t}\leq\frac{D}{2}$---a contradiction to the definition of $a_0$. For
  $a_0<y'-\frac{5D}{8}$, we can similarly arrive at a contradiction. Therefore,
  $a_0$ belongs to either $A$ or $A'$. So, either $A$ or $A'$ is non-empty.

  If such a $b_0$ exists, we observe that $b_0\in[y',y]$. We assume the
  contrary, for example, that $b_0>y$ and $(s,b_0)\in\C$ for some $s\in X$.
  \lemref{standard} implies that $s\notin[x',x]$. From the assumption of no
  double crossing, then we must have $s>x$. However, \lemref{standard-1} then
  contradicts the definition of $b_0$. We can arrive at a similar contradiction
  assuming $b_0<y'$. From the definition of $b_0$, we further note that
  $b_0\in\left(x'+\frac{5D}{8},x-\frac{5D}{8}\right)$. Now, we observe that an
  edge $(s,b_0)\in\C$ has to cross either $(x,y)$ or $(x',y')$. Otherwise,
  \lemref{standard} again contradicts the definition of $b_0$. Therefore, $b_0$
  belongs to either $B$ or $B'$. So, either $A$ or $A'$ is non-empty.

  (b) We now note that $A$ and $A'$ cannot be both non-empty; see
  \figref{ub-all}. To see this, we let $a_1\in A$, $a_2\in A'$ with $b_1,b_2\in
  Y\cap[y',y]$ so that $(a_1,b_2),(a_2,b_2)\in\C$. Since $a_1>x+D$ and
  $a_2<x'-D$, we arrive at the following contradiction to the fact that
  $D=Dist(\C)$: 
  $$
  \bigmod{\mod{a_1-a_2}-\mod{b_1-b_2}}=\mod{a_1-a_2}-\mod{b_1-b_2}>D.
  $$

  (c) We also that $B$ and $B'$ cannot be both non-empty. To see this, we let 
  $b_1\in B$, $b_2\in B'$ with $a_1>x$ and $a_2<x'$ so that
  $(a_1,b_2),(a_2,b_2)\in\C$. Since $b_1,b_2\in(y'+D,y-D)$, we arrive at the
  following contradiction: 
  $$
  \bigmod{\mod{a_1-a_2}-\mod{b_1-b_2}}=\mod{a_1-a_2}-\mod{b_1-b_2}>D.
  $$
  
  (d) By the similar argument, each of the pairs $A,B'$ and $A',B$ cannot be
  both non-empty.
\end{proof}

\begin{lemma}\label{lem:eps} Let $\eps,\eps'$, and $D$ be non-negative numbers
satisfying \eqnref{eps}. Then, for any $\Delta$ following \eqnref{delta-1}, we
have $\eps-\eps'+\frac{9D}{8}-\Delta>0$.
\end{lemma}
\begin{proof}
The proof considers two cases: $\eps\leq\frac{D}{4}$ and $\eps>\frac{D}{4}$.

If $\eps\leq\frac{D}{4}$, then $\Delta=\eps-\frac{D}{8}$. Since $\eps'\leq D$,
we get 
$$
\eps-\eps'+\frac{9D}{8}-\Delta=\eps-\eps'+\frac{9D}{8}-\left(\eps-\frac{D}{8}\right)
=\frac{10D}{8}-\eps'>D-\eps'\geq0.
$$

If $\eps>\frac{D}{4}$, then $\Delta\leq\eps+\eps'-\frac{3D}{8}$. In this case,
we have
\begin{align*}
  &\eps-\eps'+\frac{9D}{8}-\Delta \\
  &\geq\eps-\eps'+\frac{9D}{8}-\left(\eps+\eps'-\frac{3D}{8}\right) \\
  &=\frac{12D}{8}-2\eps' \\
  &=\frac{12D}{8}-2\eps'-2\eps+2\eps \\
  &=\frac{12D}{8}-2(\eps'+\eps)+2\eps \\
  &>\frac{12D}{8}-2(\eps'+\eps)+2\times\frac{D}{4},
  \text{ since in this case }\eps>\frac{D}{4} \\
  &=\frac{16D}{8}-2(\eps'+\eps) \\
  &\geq0,\text{ since }\eps+\eps'\leq D.
\end{align*}
In either case, we conclude that $\eps-\eps'+\frac{9D}{8}-\Delta>0$.
\end{proof}

\begin{lemma}\label{lem:eps-1} Let $\eps,\eps'$, and $D$ be non-negative numbers
  satisfying \eqnref{eps}. Then, for any $\Delta$ following \eqnref{delta-1}, we
  have $\frac{D}{4}+\eps+\eps'-2\Delta\geq0$.
  \end{lemma}
  \begin{proof}
  The proof considers two cases: $\eps\leq\frac{D}{4}$ and $\eps>\frac{D}{4}$.
  
  If $\eps\leq\frac{D}{4}$, then $\Delta=\eps-\frac{D}{8}$. Since
  $\eps\leq\frac{D}{2}$, we get 
  $$
  \frac{D}{4}+\eps+\eps'-2\Delta=\frac{D}{4}+\eps+\eps'
  -2\left(\eps-\frac{D}{8}\right)
  =\frac{D}{2}+\eps'-\eps\geq \frac{D}{2}-\eps\geq0.
  $$
  
  If $\eps>\frac{D}{4}$, then $\Delta\leq\eps+\eps'-\frac{3D}{8}$. In this case,
  we have
  \begin{align*}
    \frac{D}{4}+\eps+\eps'-2\Delta&\geq\frac{D}{4}+\eps+\eps'-2\left(\eps+\eps'-\frac{3D}{8}\right) \\
    &=D-(\eps+\eps') \\
    &\geq0,\text{ since }\eps+\eps'\leq D.
  \end{align*}
  In either case, we conclude that $\frac{D}{4}+\eps+\eps'-2\Delta\geq0$.
  \end{proof}  
  
  \begin{lemma}\label{lem:LR} Let $\Delta\geq0$ be a number, and $\C$ a
  correspondence between $X,Y\subset\R$ with distortion $D$. Let
  $$E(\Delta)=\left\{b\in Y\mid
  X\cap\left[b-\frac{5D}{8}+\Delta,b+\frac{5D}{8}+\Delta\right]=\emptyset\right\},
  $$
  $$
  Y_\leq=\left\{b\in Y\mid\text{ there exists an edge }(a,b)\in\C\text{ with
  }a\leq b\right\},$$
  and
  $$Y_\geq=\left\{b\in Y\mid\text{ there exists an edge }(a,b)\in\C\text{ with
  }a\geq b\right\}.
  $$
  Then $E(\Delta)\cap Y_\leq$ and $E(\Delta)\cap Y_\geq$ cannot be both 
  non-empty.
  \end{lemma}
  \begin{proof}
    We prove by contradiction. Let $b_1\in E(\Delta)\cap Y_\leq$ and 
    $b_2\in E(\Delta)\cap Y_\geq$. And, $a_1,a_2\in X$ are such that 
    $(a_1,b_1),(a_2,b_2)\in\C$ with $a_1\leq b_1$ and $a_2\geq b_2$. 
    The distortion of the edges is
    \begin{align*}
        &\mod{\mod{a_1-a_2}-\mod{b_1-b_2}} \\
        \geq&(\max\{a_1,a_2\}-\max\{b_1,b_2\})+(\min\{b_1,b_2\}-\min\{a_1,a_2\})\\
        >&2\left(\frac{5D}{8}+\Delta\right),
        \text{ since }b_1,b_2\in E(\Delta) \\
        >&D.
    \end{align*}
  This is a contradiction.
  \end{proof}

  \begin{lemma}\label{lem:flip-1} Let $\C$ be a correspondence with distortion
    $D$ between $X,Y\subset\R$. Let $\frac{D}{4}\leq h\leq\frac{D}{2}$, where
    $h=x-x'$, and $\Delta$ a number such that  
    \begin{equation}\label{eqn:Delta}
      \begin{cases}
        \Delta\in\left[0,\frac{D}{8}\right],\text{ when }\frac{D}{4}\leq h\leq\frac{3D}{8} \\
        \Delta=h-\frac{3D}{8},\text{ when }\frac{3D}{8}<h\leq\frac{D}{2}.
      \end{cases}
    \end{equation}
    Then, for any 
    $$a\in\left[x'+\frac{D}{8}+\Delta,x-\frac{h}{2}+\frac{D}{16}+\frac{\Delta}{2}\right],$$
    with an edge $(a,b)$, we have
    $\mod{a-(\widetilde{b}+\Delta)}\leq\frac{5D}{8}$.
  \end{lemma}
  \begin{proof}
    Without any loss of generality, let us assume that the mid-point of $x'$ and
    $x$ is the origin, i.e., $\widetilde{b}=-b$. Since
    $0\leq\Delta\leq\frac{D}{8}$ and $h=x-x'>\frac{D}{4}$, we have 
    $$x'\leq x'+\frac{D}{8}+\Delta\leq
    x-\frac{h}{2}+\frac{D}{16}+\frac{\Delta}{2}\leq x.$$ As a result,
    $a\in[x',x]$. We then note from \lemref{standard} that $b\in[y',y]$ and
    $\mod{a-b}\leq\frac{D}{2}$. We consider the following two cases:
  \begin{case}[$a\leq0$]
      Since $\Delta\geq0$, the distance $\mod{a-(\widetilde{b}+\Delta)}$ is
      maximum when $b=a-\frac{D}{2}$. So, the maximum value is
      \begin{align*}
      \mod{(\widetilde{b}+\Delta)-a} &=-b+\Delta-a,
      \text{ as we flip about the origin, }\widetilde{b}=-b \\
      &=-2a+\frac{D}{2}+\Delta \\
      &\leq-2\left(x'+\frac{D}{8}+\Delta\right)+\frac{D}{2}+\Delta \\
      &=2(-x')+\frac{D}{4}-\Delta=2x+\frac{D}{4}-\Delta \\
      &=2\times\frac{h}{2}+\frac{D}{4}-\Delta,
      \text{ since the midpoint of }x'\text{ and }x\text{ is assumed to be }0\\
      &=h+\frac{D}{4}-\Delta\leq\frac{5D}{8},
      \text{ considering both cases of \eqnref{Delta}}.
      \end{align*}
  \end{case}
  \begin{case}[$a>0$]
      The distance $\mod{a-(\widetilde{b}+\Delta)}$ is maximum when
      $b=a+\frac{D}{2}$. The maximum value is, 
      \begin{align*}
      \mod{a-(\widetilde{b}+\Delta)} &=2a+\frac{D}{2}-\Delta \\
      &\leq2\left(x-\frac{h}{2}+\frac{D}{16}+\frac{\Delta}{2}\right)+\frac{D}{2}-\Delta \\
      &=\frac{5D}{8},\text{ since }x=\frac{h}{2}.
      \end{align*}
  \end{case}
  Therefore, in either case, we must have
  $\mod{a-(\widetilde{b}+\Delta)}\leq\frac{5D}{8}$.
\end{proof}

\begin{lemma}\label{lem:wide-no-cross} Let $\C$ be a correspondence between
  $X,Y\subset\R$ with distortion $D$ and a wide crossing edge $(p,q)$. Let
  $(a,b)\in\C$ be an edge such that 
  \begin{equation}\label{eqn:wide-1}
    a<\begin{cases}
    \min A',&\text{ if }A'\neq\emptyset \\
    x'-\frac{D}{2},&\text{ if }\max A>y+\frac{3D}{4}\\
    x'-D,&\text{ otherwise} \end{cases}
  \end{equation}
  or
  \begin{equation}\label{eqn:wide-2}
    a>\begin{cases}
    \max A,&\text{ if }A\neq\emptyset \\
    x+\frac{D}{2},&\text{ if }\min A'<y'-\frac{3D}{4}\\
    x+D,&\text{ otherwise}. \end{cases}\end{equation}
  Then, $(a,b)$ has to be a double crossing.
\end{lemma}
\begin{proof}
We prove the result when $a<x'$ and satisfies \eqnref{wide-1}. For $a>x$
satisfying \eqnref{wide-2}, the argument is exactly the same due to symmetry. We
consider the following three cases depending on the position of $p$. 
\begin{case}[$a\leq p<x'$]
  If we assume the contrary that $(a,b)$ is not a double crossing, then $(a,b)$
  and $(p,q)$ cannot cross. We arrive at the a contradiction in each of the
  following two sub-cases.

  \paragraph{$\pmb{\left(\max A\leq y+\frac{3D}{4}\right)}$} We have from
  \defref{wide-cross} that $p<\min A'$ if $A'\neq\emptyset$ and $p<x'-D$ if
  $A'=\emptyset$. In either case, we have $(x'-p)>D$. In either case, we also
  have from $a\leq p$ and the definition of $A'$ that $b<y'$. Now, the
  distortion of the pair $(a,b)$ and $(p,q)$ is
  \begin{align*}
    (q-b)-(p-a)&=(q-p)+(a-b)\geq(q-p)-\mod{a-b}\\
    &\geq(q-p)-\frac{D}{2},\text{ from \lemref{standard-1}}\\
    &=[(q-y)+(y-x')+(x'-p)]-\frac{D}{2}\geq[(y-x')+(x'-p)]-\frac{D}{2}\\
    &=\left(\frac{D}{2}+h\right)+(x'-p)-\frac{D}{2}
    >\left(\frac{D}{2}+h\right)+D-\frac{D}{2}\geq D.
  \end{align*}
  This is a contradiction. 

  \paragraph{$\pmb{\left(\max A>y+\frac{3D}{4}\right)}$} We have
  $a<x'-\frac{D}{2}$. Let $(p_0,q_0)$ be an edge with $p_0=\max A$ and
  $q_0\in[y',y]$. We denote $\eps=p_0-x-D$ and $\eps'=y-q_0$ so that we have
  \eqnref{eps}. Also, $\max A>y+\frac{3D}{4}$ is equivalent to
  $\eps>\frac{D}{4}$.
  
  If we assume $b\in[y',q_0]$, then the distortion of the pair $(a,b)$ and
  $(p_0,q_0)$ clearly exceeds $D$. If we assume $b\in[q_0,y]$, then the
  distortion of the pair exceeds $D$ again:
  \begin{align*}
    (p_0-a)-(b-q_0)&=[(p_0-x')+(x'-a)]-(b-q_0) \\
    &\geq[(p_0-x')+(x'-a)]-(y-q_0),\text{ since }q_0\leq b\leq y\\
    &=(h+D+\eps)+(x'-a)-\eps'>(h+D+\eps)+\frac{D}{2}-\eps' \\
    &\geq(h+D+\eps)+\frac{D}{2}-(D-\eps),\text{ from \eqnref{eps}} \\
    &=h+2\eps+\frac{D}{2}> D,\text{ as }\eps>\frac{D}{4}.
  \end{align*}
  Now, we assume that $b<y'$. We first note that from the distortion of the pair
  $(a,b)$ and $(p_0,q_0)$ that
  \begin{align*}
    D&\geq(p_0-a)-(q_0-b)=(p_0-x')+(x'-a)-[(q_0-y')+(y'-b)] \\
    &=(D+\eps+h)+(x'-a)-[(D+h-\eps')+(y'-b)] \\
    &=[(x'-a)-(y'-b)]+(\eps+\eps').
  \end{align*}
  So, $(x'-a)-(y'-b)\leq D-(\eps+\eps')$. From the definition of wide crossing,
  we have $p<x'-\frac{D}{2}$. So, the distortion of the pair $(a,b)$ and $(p,q)$
  exceeds $D$:
  \begin{align*}
    (q-b)-(p-a)&=[(q-y')+(y'-b)]-[(x'-a)-(x'-p)] \\
    &=(q-y')+(x'-p)-[(x'-a)-(y'-b)] \\
    &\geq(q-y')+(x'-p)-[D-(\eps+\eps')], \text{ as noted above} \\
    &\geq(y-y')+(x'-p)-[D-(\eps+\eps')] \\
    &>(D+h)+\frac{D}{2}-[D-(\eps+\eps')]=h+\frac{D}{2}+\eps+\eps' \\
    &\geq h+\frac{D}{2}+2\eps,\text{ from \eqnref{eps}} \\
    &>D,\text{ since }\eps>\frac{D}{4}.
  \end{align*}
  This is a contradiction.
\end{case}
\begin{case}[$p<a<x'$]
  If we assume the contrary that $(a,b)$ is not a double crossing, then $(a,b)$
  and $(p,q)$ cross, i.e., $(a,\widetilde{b})$ and $(p,\widetilde{q})$ do not
  cross after flipping $Y$. We arrive at the a contradiction in each of the
  following two sub-cases.
  
  \paragraph{$\pmb{\left(\max A\leq y+\frac{3D}{4}\right)}$} We have $a<\min
  A'$. The definition of $A'$ implies that $b<y'$. We also have $(x'-a)>D$.    
  So, the distortion of the pair $(a,\widetilde{b})$ and $(p,\widetilde{q})$
  exceeds $D$:
  \begin{align*}
    (\widetilde{b}-\widetilde{q})-(a-p)&=(\widetilde{b}-a)+(p-\widetilde{q})
    \geq(\widetilde{b}-a)-\mod{p-\widetilde{q}}\\
    &\geq(\widetilde{b}-a)-\frac{D}{2},\text{ from \lemref{cross}}\\
    &\geq[(y-x')+(x'-a)]-\frac{D}{2}
    =\left(\frac{D}{2}+h\right)+(x'-a)-\frac{D}{2} \\
    &>\left(\frac{D}{2}+h\right)+D-\frac{D}{2}\geq D.
  \end{align*}
  This is a contradiction. 

  \paragraph{$\pmb{\left(\max A>y+\frac{3D}{4}\right)}$} When $a$ and $p$ are
  interchanged, the configuration becomes exactly the same as considered in
  the second sub-case of Case ($1$).
\end{case}
\begin{case}[$p>x$]
  If we assume the contrary that $(a,b)$ is not a double crossing. We arrive at
  the a contradiction in each of the following two sub-cases.

  \paragraph{($\pmb{A\neq\emptyset}$)}  
  From \lemref{AB}, we then have $A'=\emptyset$. As a result, $b<y'$. Since
  $p>x+D$ and $a<x'-\frac{D}{2}$, the distortion of $(a,b)$ and $(p,q)$ exceeds
  $D$.

  \paragraph{($\pmb{A'\neq\emptyset}$)} Since $a<\min A'$, we have from the
  definition of $A'$ that $b<y'$. Since $p>x+\frac{D}{2}$ and $a<x'-D$, the
  distortion of $(a,b)$ and $(p,q)$ exceeds $D$.
\end{case}
\end{proof}
\begin{lemma}\label{lem:wide-no-cross-1} Let $\C$ be a correspondence between
  $X,Y\subset\R$ with distortion $D$ and a wide crossing edge $(p,q)$. Let
  $(a,b)\in\C$ be an edge such that $b<y'-\frac{D}{2}$ (equivalently
  $b>y+\frac{D}{2}$). If $A\neq\emptyset$ (equivalently $A'\neq\emptyset$), then
  $(a,b)$ has to be a double crossing, i.e., $a>x$ (equivalently $a<x'$).
\end{lemma}
\begin{proof}
  We prove the result for $b<y'-\frac{D}{2}$. For $b>y+\frac{D}{2}$, the
  argument is exactly the same due to symmetry. We assume the contrary that
  $(a,b)$ is not a double crossing. Due to \lemref{standard-1}, we then have
  $a<x'$. Depending on the position of $p$, we consider the following sub-cases
  to arrive at a contradiction in each of them.
  \begin{case}[$a\leq p<x'$]
    Since $(a,b)$ does not cross $(x',y')$, we have from \lemref{standard-1}
    that $\mod{a-b}\leq\frac{D}{2}$. So, $b<y'-\frac{D}{2}$ implies that
    $a<x'-\frac{D}{2}$. We can use the argument presented in Case ($1$) of
    \lemref{wide-no-cross} to arrive at the desired contradiction.
  \end{case}
  \begin{case}[$p<a<x'$]
    We note that 
    \begin{align*}
      (\widetilde{b}-a)&=(\widetilde{b}-\widetilde{y})+(\widetilde{y}-x')+(x'-a) \\
      &=(y'-b)+\left(\frac{D}{2}+h\right)+(x'-a) \\
      &=2(y'-b)+\left(\frac{D}{2}+h\right),\text{ since }(x'-a)\geq(y'-b)
      \text{ from \lemref{standard-1}} \\
      &>\frac{3D}{2},\text{ as }b<y'-\frac{D}{2}.
    \end{align*}
    Therefore, we can use Case ($2$) of \lemref{wide-no-cross} to arrive at a
    contradiction.
  \end{case}
  \begin{case}[$p>x$]
    Since we still have $a<x'-\frac{D}{2}$, we can use the sub-case
    ($A\neq\emptyset$) of Case ($3$) from \lemref{wide-no-cross} to arrive at a
    contradiction.
  \end{case}
  So, we arrive at a contradiction in each of the above cases. Therefore,
  $(a,b)$ must be a double crossing.
\end{proof}
\begin{lemma}\label{lem:eta-1} If $(p,q)$ is a wide crossing and
  $\eta_1,\eta_1'$ are as defined in \thmref{wide-cross}, then
  $$
    \eta_1\leq\eta_1'\leq D\text{ and }\eta_1\leq\frac{D}{2}-h.
  $$
\end{lemma}
\begin{proof}
  The reader may use \figref{ub-1} for reference. We first observe that
  $\eta_1\leq~\eta_1'$, otherwise the distortion of $(p_1,q_1)$ and $(x',y')$
  exceeds $D$. We now consider the following two cases depending on the position
  of $p$.
  
  \begin{case}[$p<x'$]
    Similarly, due to the distortion of the
    wide crossing edge $(p,q)$ and $(p_1,q_1)$, we must have
    \begin{align*}
    D &\geq\mod{(p_1-p)-(q-q_1)}=\mod{[(x'-p)+h+\eta_1']-[(q-y)-\eta_1]}\\
    &=(x'-p)+h+\eta_1'-[(q-y)-\eta_1],\text{ as }(x-p')\geq(q-y)+h
    \text{  by \lemref{cross}} \\
    &\geq2h+\eta_1'+\eta_1,\text{ since }(x-p')\geq h+(q-y)
    \text{  by \lemref{cross}}\\
    &\geq2h+2\eta_1,\text{ since }\eta_1'\geq\eta_1\text{ as already noted}.
    \end{align*}
    It implies that $\eta_1\leq\frac{D}{2}-h$. Moreover, we have $\eta_1'\leq D$
    from the third line of the above equation.     
  \end{case}
  \begin{case}[$p>x$]
    If we assumed $p_1>p$, then \lemref{wide-no-cross} would imply $(p_1,q_1)$
    is a double crossing. This is a contradiction. For this reason, we assume
    that $x\leq p_1\leq p$, i.e., the edges $(p,q)$ and $(p_1,q_1)$ cross. As a
    result, the edges $(p,\widetilde{q})$ and $(p_1,\widetilde{q_1})$ do not
    cross after flipping $Y$. From the distortion bound of the pair, we get
    \begin{align*}
      D&\geq(\widetilde{q}-\widetilde{q_1})-(p-p_1)=(p_1-\widetilde{q_1})
      -(p-\widetilde{q}) \\
      &\geq(p_1-\widetilde{q_1})-\mod{p-\widetilde{q}}\geq(p_1-\widetilde{q_1})
      -\left(\frac{D}{2}-h\right),\text{ from \lemref{cross}}\\
      &=[(p_1-x)+(x-\widetilde{y})+(\widetilde{y}-\widetilde{q_1})]-\frac{D}{2}+h \\
      &=\left[\eta_1'+\left(\frac{D}{2}+h\right)+\eta_1\right]-\frac{D}{2}+h \\
      &\geq2\eta_1+2h,\text{ since }\eta_1'\geq\eta_1\text{ as already noted}.
    \end{align*}
    So, $\eta_1\leq\frac{D}{2}-h$. Moreover, we have $\eta_1'\leq D$ from the
    third line of the above equation. 
  \end{case}
 \end{proof}
\end{document}